\renewcommand{\le}{\leqslant}
\renewcommand{\ge}{\geqslant}
\definecolor{mno}{rgb}{0.5,0.1,0.5}
\newcommand{\R}{\mathds R}
\newcommand{\Pp}{\mathds P}
\newcommand{\Ee}{\mathds E}
\newcommand{\I}{\mathds 1}
\newcommand{\D}{\mathscr{D}}
\newcommand{\Nn}{\mathcal{N}}
\newtheorem{theorem}{Theorem}[section]
\newtheorem{lemma}[theorem]{Lemma}
\newtheorem{proposition}[theorem]{Proposition}
\theoremstyle{definition}
\newtheorem{example}[theorem]{Example}
\newtheorem{remark}[theorem]{Remark}
\begin{document}
\allowdisplaybreaks
\title[Intrinsic Ultracontractivity of Feynman-Kac
Semigroups] {\bfseries Intrinsic  Ultracontractivity of Feynman-Kac
Semigroups for Symmetric Jump Processes}
\author{Xin Chen\qquad Jian Wang}
\thanks{\emph{X.\ Chen:}
   Department of Mathematics, Shanghai Jiao Tong University, 200240 Shanghai, P.R. China. \texttt{chenxin\_217@hotmail.com}}
  \thanks{\emph{J.\ Wang:}
   School of Mathematics and Computer Science, Fujian Normal University, 350007 Fuzhou, P.R. China. \texttt{jianwang@fjnu.edu.cn}}

\date{}

\maketitle

\begin{abstract}
Consider a
symmetric non-local Dirichlet form $(D,\D(D))$ given by
\begin{equation*}
\begin{split}
& D(f,f)=\int_{\R^d}\int_{\R^d}\big(f(x)-f(y)\big)^2 J(x,y)\,dx\,dy
\end{split}
\end{equation*}
with $\D(D)$ the closure of the set of $C^1$ functions on $\R^d$
with compact support under the norm $\sqrt{D_1(f,f)}$, where
$D_1(f,f):=D(f,f)+\int f^2(x)\,dx$  and $J(x,y)$ is a nonnegative
symmetric measurable function on $\R^d\times \R^d$. Suppose that
there is a Hunt process $(X_t)_{t\ge 0}$ on $\R^d$ corresponding to
$(D,\D(D))$, and that $(L,\D(L))$ is its infinitesimal generator. We
study the intrinsic ultracontractivity for the Feynman-Kac semigroup
$(T_t^V)_{t\ge 0}$ generated by $L^V:=L-V$, where $V\ge 0$ is a
non-negative locally bounded measurable function such that Lebesgue
measure of the set $\{x\in \R^d: V(x)\le r\}$ is finite for every
$r>0$. By using intrinsic super Poincar\'{e} inequalities and
establishing an explicit lower bound estimate for the ground state,
we present general criteria for the intrinsic ultracontractivity of
$(T_t^V)_{t\ge 0}$. In particular, if
$$J(x,y)\asymp|x-y|^{-d-\alpha}\I_{\{|x-y|\le 1\}}+e^{-|x-y|^\gamma}\I_{\{|x-y|> 1\}}$$  for some $\alpha \in (0,2)$ and $\gamma\in(1,\infty]$,
and the potential function $V(x)=|x|^\theta$ for some $\theta>0$,
then $(T_t^V)_{t\ge 0}$ is intrinsically ultracontractive if and
only if $\theta>1$. When $\theta>1$, we have the following explicit
estimates for the ground state $\phi_1$
$$c_1\exp\Big(-c_2 \theta^{\frac{\gamma-1}{\gamma}}|x|
\log^{\frac{\gamma-1}{\gamma}}(1+|x|)\Big) \le \phi_1(x) \le
c_3\exp\Big(-c_4 \theta^{\frac{\gamma-1}{\gamma}}|x|
\log^{\frac{\gamma-1}{\gamma}}(1+|x|)\Big) ,$$ where $c_i>0$
$(i=1,2,3,4)$ are constants. We stress that our method efficiently applies to the Hunt process
$(X_t)_{t \ge 0}$ with finite range jumps, and some irregular
potential function $V$ such that $\lim_{|x| \to
\infty}V(x)\neq\infty$.
\medskip

\noindent\textbf{Keywords:} symmetric jump process; Dirichlet form;
intrinsic ultracontractivity; Feynman-Kac semigroup; compactness;
super Poincar\'{e} inequality; L\'{e}vy process
\medskip

\noindent \textbf{MSC 2010:} 60G51; 60G52; 60J25; 60J75.
\end{abstract}
\allowdisplaybreaks

\section{Introduction and Main Results}\label{section1}
\subsection{Setting and assumptions}\label{subsection1-1}
Let $(D,\D(D))$ be a symmetric non-local Dirichlet form as follows
\begin{equation}\label{non-local}
\begin{split}
 D(f,f)&=\int_{\R^d}\int_{\R^d}\big(f(x)-f(y)\big)^2 J(x,y)\,dx\,dy,\\
\D(D)&=\overline{C_c^{1}(\R^d)}^{D_1},
\end{split}
\end{equation}
where $J(x,y)$ is a non-negative measurable function on $\R^d\times
\R^d$ satisfying that

\begin{itemize}
\item[(1)] $J(x,y)=J(y,x)$ for all $x$, $y\in\R^d$;

\item[(2)] There exist $\alpha_1,\alpha_2\in(0,2)$
with $\alpha_1\le \alpha_2$ and positive $\kappa, c_1,c_2$ such that
\begin{equation}\label{e3-1}
c_1|x-y|^{-d-\alpha_1}\le J(x,y)\le c_2|x-y|^{-d-\alpha_2}, \quad
0<|x-y|\le \kappa
\end{equation} and
\begin{equation}\label{e3-2}
\sup_{x \in \R^d}\int_{\{|x-y|>\kappa\}}J(x,y)\,dy<\infty.
\end{equation}
\end{itemize}
Here, $C_c^{1}(\R^d)$ denotes the space of $C^1$ functions on $\R^d$
with compact support, $D_1(f,f):=D(f,f)+\int f^2(x)\,dx$ and $\D(D)$
is the closure of $C_c^1(\R^d)$ with respect to the metric
$D_1(f,f)^{1/2}.$ It is easy to see that \eqref{e3-1} and
\eqref{e3-2} imply that
\begin{equation*}\label{con1-1}
\sup_{x\in\R^d}\int_{\R^d}(1\wedge|x-y|^2)J(x,y)\,dy<\infty,
\end{equation*}
which in turn gives us that $D(f,f)<\infty$ for each $f \in
C_c^{1}(\R^d)$. According to \cite[Example 1.2.4]{FOT}, we know that
$(D,\D(D))$ is a regular Dirichlet form on $L^2(\R^d;dx)$. Therefore,
there exists $\mathcal{N}\subset\R^d$ having zero capacity with
respect to the Dirichlet form $(D,\D(D))$, and there is a Hunt
process $\big((X_t)_{t \ge 0},\Pp^x\big)$ with state space
$\R^d\backslash\Nn$ such that for every $f\in L^2(\R^d;dx)$ and
$t>0$, $x\mapsto \Ee^x(f(X_t))$ is a quasi-continuous version of
$T_tf$, where $\Ee^x$ is the expectation under the probability
measure $\Pp^x$ and $(T_t)_{t\ge0}$ is the $L^2$-semigroup
associated with $(D,\D(D))$, see e.g. \cite[Theorem 1.1]{BBCK}. The set $\Nn$ is called the properly exceptional set of the
process $(X_t)_{t\ge0}$ (or, equivalently, of the Dirichlet form
$(D,\D(D)))$, and it has zero Lebesgue measure. Furthermore, by (\ref{e3-1}), (\ref{e3-2}) and
the proof of \cite[Theorem 1.2]{BBCK}, there exists a positive
symmetric measurable function $p(t,x,y)$ defined on
$[0,\infty)\times(\R^d \setminus \Nn )\times (\R^d \setminus \Nn)$
such that
\begin{equation*}
T_f(x)=\Ee^x\big(f(X_t)\big)=\int_{\R^d \setminus
\Nn}p(t,x,y)f(y)\,dy,\quad x \in \R^d \setminus\Nn,\ t>0,\ f \in
B_b(\R^d);
\end{equation*}
moreover, for every $t>0$ and $y\in \R^d \setminus \Nn$, the
function $x\mapsto p(t,x,y)$ is quasi-continuous on $\R^d
\setminus\Nn$, and for any $t>0$ there is a constant $c_t>0$ such
that for any $x$, $y\in\R^d \setminus\Nn$, $0<p(t,x,y)\le c_t$.

First, we make the following continuity assumption on $p(t,x,y)$.

\begin{itemize}
\item[{\bf (A1)}] \emph{$\Nn=\varnothing$. For every $t>0$, the function $(x,y)\mapsto p(t,x,y)$ is
continuous on $\R^d \times \R^d$, and $0<p(t,x,y)\le c_t$ for all $x, y \in \R^d$.}
\end{itemize}
In particular, {\bf (A1)} implies that the Hunt process
$\big((X_t)_{t\ge0}, \Pp^x\big)$ is well defined for all $x\in\R^d$,
and the associated strongly continuous Markov semigroup $(T_t)_{t
\ge 0}$ is ultracontractive, i.e. $\|T_tf\|_{L^\infty(\R^d;dx)} \le
c_t\|f\|_{L^1(\R^d;dx)}$ for all $t>0$ and every $f\in
L^1(\R^d;dx)$.

When for any $x,y\in\R^d$, $J(x,y)=\rho(x-y)$ holds with some non-negative
measurable function $\R^d$ such that $\rho(z)=\rho(-z)$ for all $z\in\R^d$ and
$\int_{\R^d\setminus\{0\}}(1\wedge|z|^2)\rho(z)\,dz<\infty$,
the corresponding Hunt process $(X_t)_{t \ge 0}$ is a symmetric
L\'evy process having L\'{e}vy jump measure $\nu(dz):=\rho(z)\,dz$. In
this case, assumption {\bf (A1)} is equivalent to
$e^{-t\Psi_0(\cdot)}\in L^1(\R^d;dx)$ for any $t>0$, where the
characteristic exponent or the symbol $\Psi_0$ of L\'{e}vy process
$(X_t)_{t \ge 0}$ is defined by
$$
    \Ee^x\bigl(e^{i\langle{\xi},{X_t-x}\rangle}\bigr)
    =e^{-t\Psi_0(\xi)},\quad x,\xi\in\R^d, t>0.
$$ It is well known that the L\'{e}vy process enjoys the space-homogeneous property.
For sufficient conditions on the jump density $J(x,y)$ such that the
associated space-inhomogeneous Hunt process $(X_t)_{t \ge 0}$
satisfies assumption {\bf (A1)}, we refer the reader to \cite{CK,
CK1, CKK2,BBCK,CKK1} and the references therein.

\ \

Let $V$ be a non-negative measurable and locally bounded potential
function on $\R^d$. Define the Feynman-Kac semigroup
$(T^V_t)_{t\ge0}$ associated with the Hunt process $(X_t)_{t \ge 0}$ as
follows:
\begin{equation*} T^V_t(f)(x)=\Ee^x\left(\exp\Big(-\int_0^tV(X_s)\,ds\Big)f(X_t)\right),\,\, x\in\R^d,
f\in L^2(\R^d;dx).\end{equation*} It is easy to check that
$(T_t^V)_{t \ge 0}$ is a bounded symmetric semigroup on
$L^2(\R^d;dx)$.
Furthermore, following the arguments in \cite[Section 3.2]{CZ} (see
also the proof of \cite[Lemma 3.1]{KS}), we can find that for each
$t>0$, $T_t^V$ is a bounded operator from $L^1(\R^d;dx)$ to
$L^{\infty}(\R^d;dx)$, and there exists a bounded, positive and
symmetric transition kernel $p^V(t,x,y)$ on $[0,\infty)\times\R^d\times \R^d$ such
that for any $t>0$, the function $(x,y)\mapsto
p^V(t,x,y)$ is continuous on $\R^d \times \R^d$, and for every $1 \le p \le \infty$,
\begin{equation*}
T_t^Vf(x)=\int_{\R^d} p^V(t,x,y)f(y)\,dy,\quad x\in \R^d, f\in L^p(\R^d;dx).
\end{equation*}

\ \

The following result gives us an easy criterion for the compactness of
the semigroup $(T_t^V)_{t\ge0}$. The proof is mainly based on \cite[Corollary 1.3]{WW08}.
For the sake of completeness, we will provide its proof in the Appendix.

\begin{proposition}\label{p1-1}
Under Assumption {\bf (A1)}, if for any $r>0$, Lebesgue measure of the set $$\{x\in\R^d: V(x)\le r\}$$ is finite, then the semigroup
$(T^V_t)_{t\ge0}$ is compact.
\end{proposition}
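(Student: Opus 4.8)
The plan is to deduce compactness of $(T_t^V)_{t\ge0}$ on $L^2(\R^d;dx)$ from a general criterion for Feynman--Kac (Schrödinger-type) semigroups, namely \cite[Corollary 1.3]{WW08}, which typically reduces the question to two ingredients: (i) the free semigroup $(T_t)_{t\ge0}$ is ultracontractive (equivalently, has a bounded heat kernel), and (ii) the potential $V$ confines the process in an integrated sense, e.g. $\int_{\R^d} e^{-tV(x)}\,dx<\infty$ for some (hence all) $t>0$, or the Lebesgue measure of the sublevel sets $\{V\le r\}$ is finite for every $r>0$. Ingredient (i) is exactly what Assumption {\bf (A1)} gives us: under {\bf (A1)} the kernel $p(t,x,y)$ is continuous and bounded by $c_t$, so $\|T_tf\|_{L^\infty}\le c_t\|f\|_{L^1}$. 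Ingredient (ii) is precisely the hypothesis of the proposition.

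The key steps, in order, would be the following. First, I would record that by Assumption {\bf (A1)} and the construction of $p^V(t,x,y)$ recalled above, $T_t^V$ is for every $t>0$ a bounded operator from $L^1$ to $L^\infty$ with a bounded continuous kernel satisfying $0<p^V(t,x,y)\le p(t,x,y)\le c_t$ (the domination by $p$ being immediate from $V\ge0$ and the Feynman--Kac formula). Second, I would verify the finiteness condition needed to invoke \cite[Corollary 1.3]{WW08}: the hypothesis that $|\{x: V(x)\le r\}|<\infty$ for every $r>0$ means $V$ has sublevel sets of finite measure, which is the standard ``$V\to\infty$ in the sense of Lebesgue measure'' condition; combined with local boundedness of $V$ this places us squarely in the scope of that criterion. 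Third, I would apply \cite[Corollary 1.3]{WW08} (or reprove its conclusion by the standard splitting $T_t^V = T_{t/2}^V \mathbf 1_{\{V\le r\}} T_{t/2}^V + (\text{remainder})$, where the first piece is Hilbert--Schmidt because its kernel is bounded and supported, in one variable, on a set of finite measure, while the remainder has small operator norm as $r\to\infty$ because $\|e^{-\frac t2 V}\mathbf 1_{\{V>r\}}\|_\infty\le e^{-\frac t2 r}$ together with ultracontractivity controls it). This exhibits $T_t^V$ as a norm-limit of Hilbert--Schmidt operators, hence compact; and compactness for one $t>0$ propagates to all $t>0$ by the semigroup property.

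The main obstacle is the quantitative control of the ``remainder'' term in the splitting argument, i.e.\ showing that the part of $T_t^V$ coming from the region $\{V>r\}$ is small in operator norm uniformly as $r\to\infty$. This requires combining the pointwise bound $p^V(t,x,y)\le c_t$ from {\bf (A1)} with the exponential decay of $e^{-\int_0^t V(X_s)\,ds}$ on paths spending time in $\{V>r\}$, and an estimate of the $\Pp^x$-probability that the path avoids $\{V>r\}$ on $[0,t]$; one then uses finiteness of $|\{V\le r\}|$ to get an $L^2\to L^2$ bound that tends to $0$. Since \cite[Corollary 1.3]{WW08} already packages exactly this mechanism, the cleanest route is to cite it directly after checking the two hypotheses above; the full verification is deferred to the Appendix as the authors indicate.
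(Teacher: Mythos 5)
Your proposal follows the same route as the paper: the paper likewise reduces the claim to \cite[Corollary 1.3]{WW08}, after noting that {\bf (A1)} gives $\|T_t\|_{L^1\to L^\infty}\le c_t$ and that the sublevel-set condition on $V$ supplies \cite[(1.3),(1.5)]{WW08}. The one step you state only heuristically is the other hypothesis of \cite{WW08}: it is a super Poincar\'e inequality for $(D,\D(D))$ with Lebesgue reference measure (their condition (1.2)), which the paper obtains explicitly from ultracontractivity via \cite[Theorem 3.3.15]{WBook}, giving $\int f^2\,dx\le rD(f,f)+\beta(r)\big(\int|f|\,dx\big)^2$ with $\beta(r)\le c_r$. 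Your reading of the hypothesis as ``the free semigroup is ultracontractive'' is morally correct, since these are equivalent, but the paper makes the translation precise. Your alternative direct splitting sketch has two soft spots worth flagging: the Hilbert--Schmidt claim for $T_{t/2}^V\I_{\{V\le r\}}T_{t/2}^V$ is not because its kernel ``is supported, in one variable, on a set of finite measure'' (the finite-measure restriction falls on the intermediate integration variable, not on $x$ or $y$) -- rather one shows $\I_{\{V\le r\}}T_{t/2}^V$ is Hilbert--Schmidt using $p^V\le c_{t/2}$, $\int p^V(t/2,x,y)\,dy\le 1$, and $|\{V\le r\}|<\infty$; and the remainder $T_{t/2}^V\I_{\{V>r\}}T_{t/2}^V$ does not obviously have small norm simply because $\|e^{-\frac t2 V}\I_{\{V>r\}}\|_\infty\le e^{-\frac t2 r}$, since $\I_{\{V>r\}}T_{t/2}^V$ involves the Feynman--Kac weight along the whole path rather than the multiplication operator $e^{-tV/2}$, and a path starting in $\{V>r\}$ may exit immediately. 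You correctly recognize this subtlety and defer to the citation, which is exactly what the paper does.
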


From now on, we will take the following assumption:

\begin{itemize}
\item[{\bf (A2)}]  \emph{Lebesgue measure of the set $\{x\in\R^d: V(x)\le r\}$ is finite for any $r>0$. }
\end{itemize}
In particular, according to Proposition \ref{p1-1}, the semigroup
$(T^V_t)_{t\ge0}$ is compact. By general theory of semigroups for
compact operators, there exists an orthonormal basis of eigenfunctions $\{\phi_n\}_{n=1}^\infty$
in
$L^2(\R^d;dx)$
associated with corresponding eigenvalues
$\{\lambda_n\}_{n=1}^\infty$ satisfying $0< \lambda_1<\lambda_2\le
\lambda_3\cdots$ and $\lim_{n\to\infty}\lambda_n=\infty$. That is,
$L^V \phi_n=-\lambda_n \phi_n$ and $T_t^V\phi_n=e^{-\lambda_n
t}\phi_n$, where $(L^V,\D(L^V))$ denotes the infinitesimal
generator of the semigroup $(T_t^V)_{t \ge 0}$.
The first eigenfunction $\phi_1$ is called ground state in the
literature. Furthermore, according to assumptions above, we have the
following property for $\phi_1$. The proof is also left to the Appendix.

\begin{proposition}\label{p1-2}
Under Assumptions {\bf (A1)} and {\bf (A2)}, there exists a version
of $\phi_1$ which is bounded, continuous and strictly positive.
\end{proposition}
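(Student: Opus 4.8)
The plan is to combine the ultracontractivity of $(T_t^V)_{t\ge0}$ (which gives boundedness and continuity) with a standard irreducibility/positivity argument (which gives strict positivity and, en route, that $\phi_1$ does not change sign). First I would recall that since $T_t^V$ maps $L^1(\R^d;dx)$ into $L^\infty(\R^d;dx)$ with a bounded, positive, jointly continuous kernel $p^V(t,x,y)$, the identity $T_t^V\phi_1 = e^{-\lambda_1 t}\phi_1$ immediately yields a bounded representative $\phi_1(x) = e^{\lambda_1 t}\int_{\R^d}p^V(t,x,y)\phi_1(y)\,dy$ for each fixed $t>0$; since $\phi_1\in L^2\cap L^1$ (it is bounded and $V$-integrable considerations aside, $\phi_1\in L^2$ and bounded hence in $L^1$ on any ball, and globally $\phi_1\in L^1$ because... actually one only needs $\phi_1 \in L^2$ together with $p^V(t,x,\cdot)\in L^2$, which follows from $p^V(t/2,x,\cdot)\in L^\infty$ and Chapman–Kolmogorov). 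The continuity of $x\mapsto p^V(t,x,y)$, together with the dominated convergence theorem (using $0\le p^V(t,x,y)\le c_t$ and $\phi_1\in L^1$), then shows $\phi_1$ has a continuous version.

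Next I would establish that $\phi_1$ can be chosen nonnegative. Here the argument is the classical one: if $\phi_1$ changed sign, then $|\phi_1|$ would satisfy $\mathcal{E}^V(|\phi_1|,|\phi_1|)\le \mathcal{E}^V(\phi_1,\phi_1)=\lambda_1$ by the Markovian (Dirichlet) property of the form associated with $L^V$ — more precisely, the quadratic form of $L^V$ is $D(f,f)+\int V f^2\,dx$ and $D$ contracts under $f\mapsto|f|$ since $(|f(x)|-|f(y)|)^2\le (f(x)-f(y))^2$ — while $\||\phi_1|\|_2=\|\phi_1\|_2=1$. By the variational characterization of $\lambda_1$ this forces $|\phi_1|$ to also be a ground state, and then a careful look (or a direct appeal to the strict inequality in the jump-form contraction unless $f$ has constant sign) shows $\phi_1$ itself has constant sign; replace $\phi_1$ by $|\phi_1|$ so that $\phi_1\ge 0$. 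Then use the continuous representative and the kernel identity once more: $\phi_1(x)=e^{\lambda_1 t}\int p^V(t,x,y)\phi_1(y)\,dy$ with $p^V(t,x,y)>0$ everywhere and $\phi_1\ge0$, $\phi_1\not\equiv0$, so the integral is strictly positive for every $x$; hence $\phi_1>0$ on all of $\R^d$.

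The main obstacle I anticipate is the sign/simplicity step — justifying rigorously that $\phi_1$ does not change sign. The cleanest route is to invoke irreducibility of the Feynman–Kac semigroup: because $p^V(t,x,y)>0$ for all $x,y$ (which is stated in the excerpt, inherited from the positivity of $p(t,x,y)$ under {\bf (A1)} and the Feynman–Kac formula with $V$ locally bounded), the semigroup $(T_t^V)$ is positivity-improving, and then the Perron–Frobenius-type theorem for positivity-improving compact semigroups (see e.g. the standard reference treatment in terms of $\lambda_1$ being a simple eigenvalue with a positive eigenfunction) gives both that $\lambda_1$ is simple and that $\phi_1$ may be taken strictly positive. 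I would state this as the key lemma, cite the relevant functional-analytic fact, and then only need the continuity argument from the first paragraph to upgrade "positive a.e." to "continuous and strictly positive everywhere." This keeps the proof short, and all ingredients (ultracontractivity, positivity of $p^V$, joint continuity of $p^V$, compactness) are already in hand from the preceding discussion and Proposition 1.1.
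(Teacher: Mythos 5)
Your proposal follows essentially the same route as the paper: the kernel identity $\phi_1 = e^{\lambda_1 t}T_t^V\phi_1$ plus ultracontractivity gives boundedness and continuity, the Dirichlet-form contraction under $f\mapsto|f|$ plus the variational characterization of $\lambda_1$ gives $\phi_1\ge0$, and then positivity of $p^V(t,x,y)$ together with continuity upgrades this to $\phi_1>0$ everywhere. The paper does exactly the three steps you describe, so the overall plan is sound.

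One wrinkle worth fixing is in the continuity step. You correctly notice that $\phi_1\in L^1(\R^d)$ is not available (only $\phi_1\in L^2\cap L^\infty$ is known at that stage), and then propose to use $p^V(t,x,\cdot)\in L^2$ via Chapman–Kolmogorov; but in the very next sentence you again invoke dominated convergence "using $0\le p^V(t,x,y)\le c_t$ and $\phi_1\in L^1$". The dominating-function argument needs a genuinely integrable majorant, which $c_t|\phi_1|$ is not without $\phi_1\in L^1$. The paper resolves this cleanly: it truncates to
$\phi_1^R(x):=e^{-\lambda_1 t}\int_{\{|y|\le R\}}p^V(t,x,y)\phi_1(y)\,dy$,
for which local $L^1$-integrability of $\phi_1$ and boundedness of $p^V$ suffice for DCT, and controls the tail by Cauchy--Schwarz,
$\Big|\phi_1(x)-e^{\lambda_1 t}\phi_1^R(x)\Big|\le \sqrt{c_t}\Big(\int_{\{|y|>R\}}\phi_1^2(y)\,dy\Big)^{1/2},$
which tends to $0$ uniformly in $x$ as $R\to\infty$. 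If you restructure your argument this way (or carry through your $L^2$-continuity suggestion, e.g.\ showing $x\mapsto p^V(t,x,\cdot)$ is continuous into $L^2$), the gap closes. Your mention of the Perron--Frobenius alternative for positivity is a valid shortcut, though the paper sticks with the elementary variational argument you also sketch.
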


\ \

To derive a upper bound estimate for the ground state $\phi_1$, we
need the explicit expression of the operator $L^V$, which is  given
by
$$L^Vf(x)=Lf(x)-V(x)f(x).$$ Here, $(L,\D(L))$ is the generator associated with Dirichlet form $(D,\D(D))$. In
L\'{e}vy case, it is easy to see that for any $f\in
C_c^2(\R^d)\subset\D(L)$,
$$
Lf(x)=\int_{\R^d} \Big(f(x+z)-f(x)-\langle \nabla f(x),z \rangle \I_{\{|z|\le 1\}}\Big)\rho(z)\,dz,$$ where $\rho$ is the density function of the L\'{e}vy measure.
For general non-local Dirichlet form $(D,\D(D))$, if for every $x \in
\R^d$,
\begin{equation*}\label{oper-1}
\int_{\{|z|\le 1\}} |z| \left|J(x,x+z)-J(x,x-z)\right|\,dz<\infty,
\end{equation*} and for any $r>0$ large enough, \begin{equation*}\label{oper-2} x\mapsto \I_{B(0,2r)^c}
\int_{\{|x+z|\le r\}} J(x,x+z)\,dz \in L^2(\R^d;dx),\end{equation*}
then $C_c^2(\R^d)\subset \D(L)$ and for any $f\in C_c^2(\R^d)$,
\begin{equation*}
\begin{split}
Lf(x)=&
\int_{\R^d} \Big(f(x+z)-f(x)-\langle \nabla f(x),z \rangle \I_{\{|z|\le 1\}}\Big)J(x,x+z)\,dz\\
&+\frac{1}{2}\int_{\{|z|\le 1\}}\langle\nabla f(x),
z\rangle\left(J(x,x+z)-J(x,x-z)\right)\,dz,
\end{split}
\end{equation*} e.g.\ see \cite[Theorem 1.1]{W2009} for more details.
According to the discussions above, sometime we adopt the following
regular assumptions on $J(x,y)$ and the operator $L^V$, which are
satisfied for all symmetric L\'{e}vy processes.
\begin{itemize}
\item[{\bf (A3)}]  \emph{The jump kernel $J(x,y)$ satisfies that $$\sup_{x\in\R^d}\int_{\{|z|\le 1\}} |z|
\left|J(x,x+z)-J(x,x-z)\right|\,dz<\infty,$$ and for any $f\in C_c^2(\R^d)\subseteq \D(L^V)$,
\begin{equation}\label{ope11}
\begin{split}
L^Vf(x)=&
\int_{\R^d} \Big(f(x+z)-f(x)-\langle \nabla f(x),z \rangle \I_{\{|z|\le 1\}}\Big)J(x,x+z)\,dz\\
&+\frac{1}{2}\int_{\{|z|\le 1\}}\langle\nabla f(x),
z\rangle\left(J(x,x+z)-J(x,x-z)\right)\,dz-V(x)f(x).
\end{split}
\end{equation}
 }
\end{itemize}

\ \

\subsection{Main results}

\emph{Throughout this paper, we always assume that assumptions {\bf
(A1)} and {\bf (A2)} hold, and that the ground state $\phi_1$ is
bounded, continuous and strictly positive.} In this paper, we are
concerned with the intrinsic ultracontractivity for the semigroup
$(T_t^V)_{t\ge0}$. We first recall the definition of intrinsic
ultracontractivity for Feynman-Kac semigroups introduced in
\cite{DS}. The semigroup $(T_t^V)_{t\ge0}$ is intrinsically
ultracontractive if and only if for any $t>0$, there exists a
constant $C_t>0$ such that for all $x$, $y\in\R^d$,
\begin{equation*}\label{iuc}p^V(t,x,y)\le C_t\phi_1(x)\phi_1(y).\end{equation*}
In the framework of the semigroup theory, define
\begin{equation}\label{e1}
\tilde{T}_t^Vf(x):=\frac{e^{\lambda_1t}}{\phi_1(x)}T_t^V((\phi_1f))(x),\quad t>0,
\end{equation}
which is a Markov semigroup on $L^2(\R^d; \phi^2_1(x)\,dx)$. Then, $(T_t^V)_{t\ge0}$ is intrinsically ultracontractive if and only if $(\tilde{T}_t^V)_{t\ge0}$ ultracontractive, i.e., for every $t>0$,
$\tilde{T}_t^V$ is a bounded operator from $L^2(\R^d; \phi^2_1(x)\,dx)$ to $L^\infty(\R^d; \phi^2_1(x)\,dx)$.

Recently, the intrinsic ultracontractivity of $(T_t^V)_{t\ge0}$
associated with some special pure jump symmetric L\'evy process
$(X_t)_{t\ge0}$ has been investigated in \cite{KS,KK,KL}. The
approach of all these cited papers is based on sharp and explicit
pointwise upper and lower bound estimates for the ground state $\phi_1$
corresponding to the semigroup $(T_t^V)_{t\ge0}$. However, to apply
such powerful technique,  some restrictions on the density function
of jump kernel are needed, e.g.\ see \cite[Assumption 2.1]{KL}. In
particular, in L\'{e}vy case the following typical example
\begin{equation}\label{jjj} J(x,y)\asymp|x-y|^{-d-\alpha}\I_{\{|x-y|\le
1\}}+e^{-|x-y|^\gamma}\I_{\{|x-y|> 1\}}\end{equation} with $\alpha \in (0,2)$
and $\gamma\in (1,\infty]$ is not included in \cite{KS,KL,KK}. Here and in what follows,
for two functions
$f$ and $g$ defined on $\R^d\times \R^d$, $f \asymp g$ means that there is a constant $c>1$ such that $c^{-1}g(x,y)\le f(x,y) \le c g(x,y)$ for all $(x,y)\in\R^d\times \R^d$.
In particular, when $\gamma=\infty$,
$$J(x,y)\asymp|x-y|^{-d-\alpha}\I_{\{|x-y|\le 1\}},$$ which is
associated with the truncated symmetric $\alpha$-stable process. As
mentioned in \cite{CKK2, CKK1,BBCK}, such jump density function
$J(x,y)$ is very important in applications,
and
it
arises in statistical physics to model turbulence as well as in
mathematical finance to model stochastic volatility.

Furthermore, the following growth condition on the potential function
\begin{equation}\label{e1-1}
\lim_{|x| \to \infty}V(x)=\infty
\end{equation}
was commonly used in \cite{KS,KK, KL} to derive the compactness
of $(T_t^V)_{t \ge 0}$, e.g. \cite[Assumption 2.4]{KL}. However, as shown by Proposition \ref{p1-2}, assumption
{\bf (A2)}, which is much weaker than (\ref{e1-1}), is sufficient to ensure the compactness of
$(T_t^V)_{t \ge 0}$. Therefore, a natural question is whether one can give some sufficient conditions
for the intrinsic ultracontractivity of $(T_t^V)_{t \ge 0}$ without the restrictive condition
(\ref{e1-1}).

In this paper, we will make use of super Poincar\'e inequalities
with respect to infinite measure developed in \cite{Wang02} and
functional inequalities for non-local Dirichlet forms recently
studied in \cite{WW,WJ13,CW14} to deal with the questions mentioned above. We aim to present some sharp
conditions on the potential function $V$ such that the associated
Feynman-Kac semigroup $(T_t^V)_{t\ge0}$ is intrinsically
ultracontractive, and also derive explicit two-sided estimates for
the ground state $\phi_1$.  Our method is different from that of \cite{KS,KK,KL}, and deals with the intrinsic ultracontractivity of $(T_t^V)_{t\ge0}$
for non-local Dirichlet forms in more general situations. The following points indicate the novelties of
our paper.

\begin{itemize}
\item[(i)] We can deal with the example $J(x,y)$ mentioned
in \eqref{jjj}, which essentially means that small jumps play the dominant
roles for the behavior of the associated process. On the other hand, by \eqref{e3-1}, we also consider the case that the density of the small jumps
can enjoy the variable order property.

\item[(ii)] For a large class of
potential functions $V$ which do not satisfy the growth condition (\ref{e1-1}) or
the regularity condition (\ref{e1-1a}) below, we can still obtain some
sufficient conditions for the intrinsic ultracontractivity of
$(T_t^V)_{t \ge 0}$, which to the
best of our knowledge do not appear in the literature.

\item[(iii)]
Our method here efficiently
applies to Hunt process generated by non-local Dirichlet forms. In
particular, the associated process does not like L\'{e}vy process,
and it is usually not space-homogeneous.
\end{itemize}

\bigskip

Now, we will present main results of our paper, which will be split into two subsections.

\subsubsection{{\bf The case that $\lim\limits_{|x|\to\infty} V(x)=\infty$.}} The following statement is a consequence of more general Theorem  \ref{t3-1}
below.

\begin{theorem}\label{thm2}
Suppose that \eqref{e3-1}, \eqref{e3-2}, {\bf (A1)} and {\bf (A2)}
hold, and that there exist positive constants $c_i$ $(i=3,4)$,
$\theta_i$ $(i=1,3)$ and constants $\theta_i$ $(i=2,4)$ such that
for every $x \in \R^d$ with $|x|$ large enough,
\begin{equation}\label{vv} c_3|x|^{\theta_1}\log^{\theta_2}(1+|x|) \le V(x)\le
c_4|x|^{\theta_3}\log^{\theta_4}(1+|x|).\end{equation} If
$\theta_1=1$ and $\theta_2>2$ or if $\theta_1>1$, then $(T_t^V)_{t \ge 0}$ is
intrinsically ultracontractive, and for any $\varepsilon>0$, there
exists a constant $c_5=c_5(\varepsilon)>0$ such that for all
$x\in\R^d$,
$$ c_5\exp\Big(- \frac{(1+\varepsilon)\theta_3}{\kappa}|x|\log(1+|x|)\Big)\le
\phi_1(x).$$

Additionally, if {\bf (A3)} also holds and
 $$J(x,y)=0,\quad x,y\in\R^d\textrm{ with } |x-y|>\kappa,$$ then for any $\varepsilon>0$, there exists a constant
$c_6=c_6(\varepsilon)>0$ such that for all $x\in\R^d$,
$$
 \phi_1(x)\le  c_6\exp\Big(-\frac{(1-\varepsilon)\theta_1}{\kappa}
 |x|\log(1+|x|)\Big).
$$
\end{theorem}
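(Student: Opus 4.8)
The statement comprises three parts: the intrinsic ultracontractivity of $(T_t^V)_{t\ge0}$, the lower bound for $\phi_1$, and, under {\bf (A3)} and finite range, the upper bound for $\phi_1$. For the first part I would simply invoke the general criterion of Theorem \ref{t3-1} and verify that \eqref{vv} meets its hypotheses. That criterion takes as input a super Poincar\'e inequality for the bilinear form $D^V(f,f):=D(f,f)+\int_{\R^d}Vf^2\,dx$ with respect to Lebesgue measure, namely $\int_{\R^d}f^2\,dx\le r\,D^V(f,f)+\beta(r)\big(\int_{\R^d}|f|\,dx\big)^2$ for all $r>0$, together with a pointwise lower bound for $\phi_1$, and upgrades them to an intrinsic super Poincar\'e inequality for $(\tilde T_t^V)_{t\ge0}$ on $L^2(\R^d;\phi_1^2\,dx)$, which is equivalent to the latter being ultracontractive. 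The required super Poincar\'e inequality comes from the small‑jump lower bound in \eqref{e3-1} (which yields a local Nash/super Poincar\'e inequality on balls of radius $\le\kappa$ with rate $\sim r^{-d/\alpha_1}$), patched over $\R^d$ and combined with the elementary estimate $\int_{\{V>s\}}f^2\,dx\le s^{-1}D^V(f,f)$; optimising over the level $s$ and using {\bf (A2)} turns the growth $V(x)\ge c_3|x|^{\theta_1}\log^{\theta_2}(1+|x|)$ into an explicit rate $\beta$, and the dichotomy ``$\theta_1=1,\ \theta_2>2$ or $\theta_1>1$'' is precisely the borderline under which $\beta$ satisfies the admissibility condition in Theorem \ref{t3-1}.

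For the lower bound on $\phi_1$ I would argue probabilistically from $T_t^V\phi_1=e^{-\lambda_1 t}\phi_1$, i.e. $\phi_1(x)=e^{\lambda_1 t}\Ee^x\big[\exp\big(-\int_0^t V(X_s)\,ds\big)\phi_1(X_t)\big]$ for every $t>0$. Fix small $\delta>0$, set $n=n(x):=\lceil(1+\delta)|x|/\kappa\rceil$, and restrict the expectation to the event $A$ on which, within $[0,t]$, the process performs $n$ successive jumps of size in $[(1-\delta)\kappa,\kappa]$, each aimed (inside a fixed narrow cone) towards the origin — so that $X$ reaches $B(0,1)$ — makes no other jump of size $>1$, and has $X_t\in B(0,1)$; on $A$ the whole path stays in $B(0,|x|+1)$. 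By the L\'evy system of $(X_t)_{t\ge0}$ together with \eqref{e3-1}–\eqref{e3-2}, each prescribed good jump in a time slot of length $t/n$ has probability $\ge c\,(t/n)$ with $c>0$ depending only on $d,\alpha_1,\delta,\kappa$ and the constants in \eqref{e3-1}–\eqref{e3-2}, so $\Pp^x(A)\ge (ct/n)^{n}e^{-Ct}$; moreover on $A$ one has $\phi_1(X_t)\ge\inf_{B(0,1)}\phi_1=:c_0>0$ and, by \eqref{vv}, $\int_0^t V(X_s)\,ds\le C t\,|x|^{\theta_3}\log^{\theta_4}(1+|x|)=:tV_*$. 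Hence $\phi_1(x)\ge c_0\exp\big(\lambda_1 t-tV_*+n\log(ct/n)-Ct\big)$, and optimising over $t$ (the choice $t\asymp n/V_*$ is essentially optimal and, since $\theta_3\ge\theta_1\ge1$, bounded) produces an exponent $\ge -n\log V_*-C'n\ge-(1+\varepsilon)\tfrac{\theta_3}{\kappa}|x|\log(1+|x|)$ for $|x|$ large, because $\log V_*=\theta_3\log(1+|x|)(1+o(1))$ and $n\le(1+2\delta)|x|/\kappa$; since $\delta$ is arbitrary this is the claim.

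For the upper bound, assuming {\bf (A3)} and $J(x,y)=0$ for $|x-y|>\kappa$, I would build a supersolution. Put $\psi(r):=r\log(1+r)$, fix $\varepsilon\in(0,1)$, and set $\beta:=(1-\varepsilon)\theta_1/\kappa$ and $g(x):=\exp(-\beta\psi(|x|))$. Using the explicit form \eqref{ope11} of $L$ and the finite range of $J$, the dominant contribution to $Lg(x)$ is from inward jumps $z\approx-\kappa x/|x|$, where $g(x+z)\le g(x)\exp(\beta\kappa\psi'(|x|)(1+o(1)))$; a Laplace‑type estimate of $\int_{|z|\le\kappa}g(x+z)J(x,x+z)\,dz$ (using $J(x,x+z)\le c_2|z|^{-d-\alpha_2}$) plus control of the compensated small‑jump terms — which are of smaller order since $\psi''(r)\to0$ — gives $Lg(x)\le C g(x)\exp(\beta\kappa\psi'(|x|)(1+o(1)))=Cg(x)|x|^{(1-\varepsilon)\theta_1(1+o(1))}$ for $|x|$ large. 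As $\beta\kappa=(1-\varepsilon)\theta_1<\theta_1$ and $V(x)\ge c_3|x|^{\theta_1}\log^{\theta_2}(1+|x|)\to\infty$, it follows that $(L^V+\lambda_1)g(x)=Lg(x)-(V(x)-\lambda_1)g(x)\le0$ for all $|x|>R$ with $R$ large. Multiplying $g$ by a constant $K$ so that $Kg\ge\phi_1$ on $\overline{B(0,R)}$ (possible since $\phi_1$ is bounded and $g$ is bounded below there), the function $w:=Kg-\phi_1$ satisfies $(L^V+\lambda_1)w\le0$ on $\{|x|>R\}$, $w\ge0$ on $\overline{B(0,R)}$, and $w$ is bounded; applying Dynkin's formula to the process killed on leaving $\{|x|>R\}$ at the rate $V-\lambda_1$ (positive there for $R$ large) yields $w(x)\ge\Ee^x\big[\exp\big(-\int_0^{\tau\wedge t}(V(X_s)-\lambda_1)\,ds\big)w(X_{\tau\wedge t})\big]$ with $\tau$ the exit time, and letting $t\to\infty$ (the $\{\tau=\infty\}$ contribution vanishes because the killing rate is bounded below) gives $w(x)\ge\Ee^x\big[\exp\big(-\int_0^{\tau}(V-\lambda_1)\,ds\big)w(X_\tau)\I_{\{\tau<\infty\}}\big]\ge0$. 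Hence $\phi_1\le Kg$ on $\R^d$, which is the asserted upper bound.

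The main obstacles are: (i) in the first part, producing the global super Poincar\'e inequality with a \emph{sharp} rate $\beta$, so that the borderline $\theta_1=1,\theta_2>2$ (resp.\ $\theta_1>1$) is exactly matched — the patching of local inequalities and the optimisation over the level set of $V$ must be done carefully; (ii) in the lower bound, making the chaining event and the estimate of $\Pp^x(A)$ rigorous through the L\'evy system (in particular ruling out that the small‑jump part drifts away between the prescribed jumps) and then controlling the $t$‑optimisation uniformly in $x$; and (iii) in the upper bound, the Laplace‑method estimate of $Lg$ including the compensated small‑jump term, together with justifying the Dynkin/comparison argument for the non‑local operator (e.g.\ by a localisation or mollification of $g$).
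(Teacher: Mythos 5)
Your overall strategy coincides with the paper's: establish intrinsic ultracontractivity by invoking the general criterion (Theorem~\ref{t3-1}, which is an instance of Theorem~\ref{p2-1} combined with the intrinsic local super Poincar\'e inequality of Proposition~\ref{l3-4} and the ground-state lower bound of Proposition~\ref{p3-1}); obtain the lower bound for $\phi_1$ by a probabilistic ``chaining'' argument through the L\'evy system; and obtain the upper bound from an explicit supersolution $\psi\approx\exp\big(-\tfrac{(1-\varepsilon)\theta_1}{\kappa}|x|\log(1+|x|)\big)$. The differences are mainly in the implementation of the last two steps.

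For the lower bound, the paper's Lemma~\ref{l3-3} fixes a small time horizon $t_0=T(\kappa,\varepsilon)$ and splits it into $n\approx|x|/\kappa$ slots, each time amortizing the potential cost against the waiting time via the identity $\sum_j e^{-r/j}/(j(j+1))\ge e^{-1}/(r+1)$, leading after $n$ steps to a factor $\exp\big(-n\log(n+t_0\sup V)\big)$. Your version fixes $n$ and instead optimizes over the global horizon $t$, arriving at $t\asymp n/V_*$ and thus at the same exponent $-n\log V_*\approx -\tfrac{\theta_3}{\kappa}|x|\log|x|$. The two parametrizations are logically equivalent, since the paper's choice $j\asymp t_0\sup V/n$ corresponds to an effective holding time per step of order $1/\sup V$, i.e.\ to your $t/n$. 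You correctly flag that one must rule out the small-jump part drifting away between the prescribed jumps; in the paper this is taken care of by Lemma~\ref{l3-1} (a lower bound on the exit time of a small ball), which is exactly the ingredient your sketch still needs.

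For the upper bound, you and the paper build the same supersolution (your $g$ is essentially the paper's $\psi$), but you conclude via a comparison/maximum-principle argument based on Dynkin's formula for $w=Kg-\phi_1$, while the paper instead invokes Proposition~\ref{pro--00}: from $L^V\psi\le\lambda\psi$ one gets $T_t^V\psi\le e^{\lambda t}\psi$, and the already-established intrinsic ultracontractivity supplies the two-sided heat-kernel bound $p^V(1,x,y)\ge c_1\phi_1(x)\phi_1(y)$ (Davies--Simon), whence $\psi\ge c\,\phi_1$ directly. Your route is more elementary in that it does not consume the IU result, but it has a genuine regularity gap: Dynkin's formula for $w$ requires $\phi_1$ in a pointwise domain of $L^V$, which is not among the standing assumptions. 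This can be repaired by splitting the argument: for $\phi_1$ use the martingale property of $e^{\lambda_1 t-\int_0^t V(X_s)\,ds}\phi_1(X_t)$, which follows directly from $T_t^V\phi_1=e^{-\lambda_1t}\phi_1$ and needs no pointwise smoothness; and for $g$ use Dynkin, which is legitimate since $g\in C_b^2\cap L^2$ and {\bf (A3)} puts $g$ in $\D(L^V)$. Subtracting the resulting identity/inequality and letting $t\to\infty$ then gives the same conclusion as your sketch. The paper's route through Proposition~\ref{pro--00} avoids this issue entirely, which is why it is preferable, but your approach is recoverable with the fix above.
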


To show that Theorem \ref{thm2} is sharp, we have the following
example, which, as mentioned above, can not be studied by the method
used in \cite{KS, KK, KL}.

\begin{example}\label{ex2}\it
Suppose that assumptions {\bf (A1)}, {\bf (A2)} and {\bf (A3)} hold,
and
$$J(x,y)\asymp|x-y|^{-d-\alpha}\I_{\{|x-y|\le
1\}}+e^{-|x-y|^\gamma}\I_{\{|x-y|> 1\}},$$ where $\alpha \in (0,2)$
and $\gamma\in(1,\infty]$. If $V(x)=|x|^{\theta}$ for some constant
$\theta>0$, then the semigroup $(T_t^V)_{t \ge 0}$ is intrinsically
ultracontractive if and only if $\theta>1$. When $\theta>1$, we have the following explicit two-sided estimates for the
ground state $\phi_1$.
\begin{enumerate}
\item[(1)] If $\gamma=\infty$, i.e. the associated Hunt process $(X_t)_{t\ge0}$ is with finite range jumps,
then for any $\varepsilon\in(0,1)$, there exist $c_i=c_i(\varepsilon,\theta)$ $(i=1,2)$ such that for all $x\in\R^d$,
\begin{equation}\label{ex2-1}
\begin{split}c_1\exp\Big(-(1+\varepsilon)\theta|x|\log(1+|x|)\Big)& \le
\phi_1(x)\\
&\le
c_2\exp\Big(-(1-\varepsilon)\theta|x|\log(1+|x|)\Big).\end{split}
\end{equation}
\item[(2)] If $1<\gamma<\infty$, then there exist positive constants $c_i:=c_i(\gamma)$ $(i=4, 6)$ independent of $\theta$ such that for all $x\in\R^d$,
\begin{equation}\label{ex2-2}
\begin{split}c_3\exp\Big(-c_4 \theta^{\frac{\gamma-1}{\gamma}}|x|\log^{\frac{\gamma-1}{\gamma}}(1+|x|)\Big) &\le
\phi_1(x)\\
&\le
c_5\exp\Big(-c_6 \theta^{\frac{\gamma-1}{\gamma}}|x|\log^{\frac{\gamma-1}{\gamma}}(1+|x|)\Big)\end{split}
\end{equation}
holds for some positive constants $c_3=c_3(\theta,\gamma)$ and $c_5(\theta,\gamma)$.
\end{enumerate}
\end{example}

We make some comments on Theorem \ref{thm2} and Example \ref{ex2}.
\begin{remark}
(1) Compared with \cite{KS,KK, KL}, to ensure the intrinsic ultracontractivity of $(T_t^V)_{t \ge 0}$ Theorem \ref{thm2}
gets rid of the following restrictive condition on potential function $V$:
\begin{equation}\label{e1-1a}
\sup_{z\in B(x,1)}V(z)\le C V(x),\quad |x|\ge1,
\end{equation}
 see e.g.\ \cite[Assumption 2.5 and Corollary 2.3(1)]{KL}.
 Intuitively, regularity condition (\ref{e1-1a}) means that the rate for the oscillation of
 $V$ is mild. However,  according to \eqref{vv}, we know from Theorem \ref{thm2} that $(T_t^V)_{t \ge 0}$ still may be intrinsically
 ultracontractive
without such regular condition on $V$. The reader can refer to Proposition \ref{pro} below for more general conditions on $V$.
 Roughly speaking, the upper bound for $V$ in \eqref{vv} is used to control the lower bound for the ground state $\phi_1$, while the lower bound for $V$ is needed to establish the upper bound estimate for $\phi_1$, and also the intrinsic (local) super Poincar\'{e} inequality for Dirichlet form $(D,\D(D))$.

(2) In L\'{e}vy case, if $V(x)=|x|^{\theta}$ for some $\theta>0$,
the conclusion of Example \ref{ex2} says that $(T_t^V)_{t \ge 0}$ is intrinsically ultracontractive if and
only if $\theta>1$. Such condition on $V$ is the
same as that in case of $\gamma=1$, which is associated with the Feynman-Kac
semigroup for relativistic $\alpha$-stable processes, see
\cite[Theorem 1.6 and the remark below]{KS} for more details.
However,
the case $\gamma\in(1,\infty]$ does not fit the framework of
\cite{KS,KK,KL}, and it is essentially different from the case $\gamma \in
(0,1)$. Indeed, let $\rho$ be the density function of the L\'{e}vy
measure. According to \cite[Assumption 2.1]{KL}, the function $\rho$
is required to satisfy that
\begin{itemize}
\item[(i)] There exists a constant $C_1>0$ such that for every $1\le |y|\le
|x|$,
$$\rho(y)\le C_1\rho(x).$$
\item[(ii)] There exists a constant $C_2>0$ such that for all $x$,
$y\in\R^d$ with $|x-y|>1$,
$$\int_{\{|z-x|\ge1, |z-y|\ge 1\}} \rho(x-z)\rho(z-y)\,dz\le
C_2\rho(x-y).$$
\end{itemize}
By \cite[Example 4.1 (3)]{KL}, the assumptions (i) and
(ii) are only satisfied when $\gamma\in(0,1]$. On the other hand,
the difference between $\gamma>1$ and $0<\gamma\le 1$ is also
indicated by \cite[Theorem 1.2 (1) and (2)]{CKK1}, where explicit
global heat kernel estimates of the associated process (depending on
the parameter $\gamma$) are presented.


(3) In Example \ref{ex2} (1), i.e.\ $\gamma=\infty$, the symmetric Hunt process associated with density function $J$ above is the truncated symmetric $\alpha$-stable-like process, e.g.\ see \cite{CKK2}. On the other hand,
if the Hunt process is a Brownian motion and $V(x)=|x|^\theta$ for
some $\theta>0$, then, according to \cite[Theorem 6.1]{DS} (at least
in one dimension case), we know that the associated Feynman-Kac
semigroup is intrinsically ultracontractive if and only if
$\theta>2$.  This, along with Example \ref{ex2} (1), indicates the
difference of the intrinsic ultracontractivity for Feynman-Kac
semigroups between L\'evy process (symmetric jump processes) with finite range jumps and
Brownian motion.
\end{remark}

\ \

\subsubsection{{\bf The case that $\lim\limits_{|x|\to\infty} V(x)\neq\infty$.}}
The following theorem gives us sufficient conditions on the intrinsic ultracontractivity
of $(T_t^V)_{t \ge 0}$ for a class of irregular potential functions $V$ such that $\lim\limits_{|x|\to\infty} V(x)\neq\infty$.
Denote by $|A|$
Lebesgue measure of a Borel set $A\subseteq \R^d$.
\begin{theorem}\label{thm3}
Suppose that \eqref{e3-1}, \eqref{e3-2}, assumptions {\bf (A1)} and {\bf (A2)} hold,  and that there
exists a unbounded subset $A\subseteq \R^d$ such that the following conditions are satisfied.
\begin{enumerate}
\item [(1)] $|A|<\infty$ and $$A \cap \{x \in \R^d: |x|\ge R\}\neq\emptyset, \quad\forall\ R>0.$$

\item  [(2)] There
exist positive constants $c_i$ $(i=3,4)$, $\theta_i$ $(i=1,2)$ with $\theta_1>2$ and
constant $\theta_3\in\R$ such that for all $x \in \R^d$ with $|x|$ large enough,
$$ V(x)=1,\quad  x \in A$$ and
$$c_3|x|\log^{\theta_1}(1+|x|) \le V(x)\le
c_4|x|^{\theta_2}\log^{\theta_3}(1+|x|),\quad  x \notin A.$$

\item [(3)] There exist positive constants $c_i$ $(i=5,6)$ and $\eta_i$ $(i=1,2)$ such that
for every $R>2$,
$$|\{x \in \R^d: x \in A, |x|\ge R\}|\le c_5 \exp(-c_6R^{\eta_1}\log^{\eta_2}R).$$
\end{enumerate}
Then, we have
\begin{enumerate}
\item [(i)] If $\eta_1=1$ and $\eta_2>1$, then the associated
Feynman-Kac
semigroup $(T_t^V)_{t \ge 0}$  is intrinsically ultracontractive.

\item [(ii)] If $\eta_1=\eta_2=1$, then there exists a constant
$c_0>0$ such that for any $c_6>c_0$, the associated
semigroup $(T_t^V)_{t \ge 0}$  is intrinsically ultracontractive.
\end{enumerate}

\ \

Suppose moreover $d>\alpha_1$, and replace $(3)$ by the following weaker condition
\begin{itemize}
\item[(4)] There exist positive constants $c_7$ and $\eta_3$ such that
for every $R>2$,
\begin{equation}\label{power} |\{x \in \R^d: x \in A, |x|\ge R\}|\le \frac{c_7}{ R^{d/\alpha_1}\log^{\eta_3}R} .\end{equation}
\end{itemize}
Then, if $d>\alpha_1$, $(1)$, $(2)$ and $(4)$ hold with $\eta_3>{2d}/{\alpha_1}$, then the associated
semigroup $(T_t^V)_{t \ge 0}$ is intrinsically ultracontractive.
\end{theorem}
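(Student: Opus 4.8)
\textbf{Proof strategy for Theorem \ref{thm3}.}

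The plan is to reduce the intrinsic ultracontractivity of $(T_t^V)_{t\ge0}$ to two ingredients, exactly as the framework of Section \ref{section1} suggests: an \emph{intrinsic super Poincar\'e inequality} for the Dirichlet form associated with $(\tilde T_t^V)_{t\ge0}$ on $L^2(\R^d;\phi_1^2\,dx)$, and a quantitative \emph{lower bound for the ground state} $\phi_1$. For the first part, one decomposes the state space into the ``good'' region where $V$ grows like $|x|\log^{\theta_1}(1+|x|)$ and the ``bad'' set $A$ where $V=1$; on the complement of $A$ the argument is essentially that of Theorem \ref{thm2} (via the general Theorem \ref{t3-1}), while on $A$ one exploits that $|A|<\infty$ together with the decay condition (3) or (4) to show that $A$ contributes only a negligible amount to the relevant Nash/super Poincar\'e functional. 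Concretely, for a set of finite measure one has the local super Poincar\'e inequality coming from \eqref{e3-1} (small jumps dominate, so the form behaves like a fractional Sobolev form of order $\alpha_1$), and the tail estimate in (3) or (4) controls the rate function $\beta(r)$ in the super Poincar\'e inequality $\|f\|_2^2\le r D(f,f)+\beta(r)\|f\|_1^2$. The dichotomy $\eta_1=1,\eta_2>1$ versus $\eta_1=\eta_2=1$ reflects whether the $\log$-correction is strong enough to beat the linear growth of $V$ unconditionally, or only when the constant $c_6$ is large enough to compensate; in the latter case the threshold $c_0$ will emerge as a comparison between $c_6$ and a spectral/geometric constant coming from the growth rate of $V$ off $A$.

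For the ground state lower bound, I would follow the approach indicated in Theorem \ref{thm2}: use the probabilistic representation $\phi_1(x)=e^{\lambda_1 t}\Ee^x(e^{-\int_0^t V(X_s)\,ds}\phi_1(X_t))$ and bound from below the probability that the process travels from a distant point $x$ to a fixed compact set in a controlled time while keeping $\int_0^t V(X_s)\,ds$ small. Because the jump kernel is only comparable to $|x-y|^{-d-\alpha_1}$ for $|x-y|\le\kappa$ (and may even vanish for large jumps), the process must reach the compact set by a chain of $O(|x|/\kappa)$ small jumps, which produces the factor $\exp(-c|x|\log(1+|x|))$; the contribution of the potential along such a path is controlled using the upper bound $V(x)\le c_4|x|^{\theta_2}\log^{\theta_3}(1+|x|)$ off $A$ and $V=1$ on $A$. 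This is where condition (2), with $\theta_1>2$, is used on the lower-bound side of $V$ only indirectly — the real constraint $\theta_1>2$ (and the dichotomy on $\eta_1,\eta_2$) enters in matching this lower bound for $\phi_1$ against the decay rate $\beta(r)$ above so that the ultracontractivity estimate $\tilde T_t^V:L^2(\phi_1^2)\to L^\infty(\phi_1^2)$ closes.

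For the last assertion (the power-decay alternative, assuming $d>\alpha_1$ and (4) with $\eta_3>2d/\alpha_1$), the difference is that $A$ now has only polynomial, rather than exponential, tail decay, so one can no longer afford to use a crude bound on its contribution. Here the hypothesis $d>\alpha_1$ is essential because it gives a genuine Sobolev embedding $\D(D)\hookrightarrow L^{2d/(d-\alpha_1)}$ for the order-$\alpha_1$ form, so that the contribution of $A$ to the super Poincar\'e functional can be estimated by H\"older's inequality in terms of $|A\cap\{|x|\ge R\}|^{\alpha_1/d}$; the condition $\eta_3>2d/\alpha_1$ is precisely what makes the resulting rate function $\beta(r)$ decay fast enough, after raising to the power $\alpha_1/d$, to still beat the $\phi_1$ lower bound. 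I expect the main obstacle to be the bookkeeping in this last step: carefully splitting $f$ into its restrictions to $A$ and $A^c$, applying the Sobolev inequality on the $A$-piece without losing the dependence on $R$, and then optimizing over $R$ (or equivalently over the truncation level in $V$) to get a clean rate function — this is the point where the precise exponent $2d/\alpha_1$ is forced, and getting the constants to line up with the ground-state estimate is the delicate part.
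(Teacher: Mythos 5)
Your outline matches the paper's proof: Theorem \ref{thm3} is indeed deduced by feeding conditions (2)--(4) into the general criterion of Theorem \ref{t3-1}, which packages the super Poincar\'e machinery of Theorem \ref{p2-1} together with the ground-state lower bound (Proposition \ref{p3-1}), the local intrinsic super Poincar\'e inequality (Proposition \ref{l3-4}) and, for the last assertion under (4), the Sobolev inequality (Proposition \ref{ppp-444}). Your reading of why $\theta_1>2$, the $\eta_1,\eta_2$ dichotomy, and $\eta_3>2d/\alpha_1$ appear is also correct.

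What you defer, and what actually constitutes the paper's proof, is the computation of the rate functions from the hypotheses: from (2) one gets $\Phi(R)\gtrsim R\log^{\theta_1}R$ and hence $\beta_\Phi^{-1}(r)\lesssim\big(\log(1+r)\log^{\theta_1-1}\log(e+r)\big)^{-1}$; from (3), $\gamma(s_n)\lesssim\exp\big[-c_6 C_4(\log\delta)^{\eta_1}n^{\eta_1}(\log n)^{\eta_2-\eta_1}\big]$ with $C_4$ \emph{independent} of $\delta$ and $c_6$, and the summability of $\gamma(s_n)\delta^n$ is exactly what produces the alternative (i) versus (ii) with $c_6>1/C_4$; then $\tilde\beta_\Phi(s)\lesssim\exp\big[C(1+s^{-1})\log^{1-\theta_1}(1+s^{-1})\big]$ and $\int^{\infty}\tilde\beta_\Phi^{-1}(s)s^{-1}\,ds<\infty$ requires $\theta_1>2$; finally, under (4), $\Psi(R)\lesssim(R\log^{\theta_1}R)^{-1}+(R\log^{\alpha_1\eta_3/d}R)^{-1}$ and the analogous integrability test forces $\eta_3>2d/\alpha_1$. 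These verifications are mechanical once the framework is in place, but until they are carried out the argument is only a plan; in particular, for part (ii) it is essential that the constant $C_4$ above does not depend on $\delta$, which is a point your sketch does not address.
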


{ The following example shows that one can not
replace the decay rate $d/\alpha_1$ in \eqref{power} by $d/\alpha_1-\varepsilon$ with any
$\varepsilon>0$.

\begin{example}\label{ex3}\it Consider the truncated symmetric $\alpha$-stable process on $\R^d$  with
some $0<\alpha<2$, i.e.  $$J(x,y)=|x-y|^{-d-\alpha},\quad 0<|x-y|\le 1$$
and $$J(x,y)=0,\quad |x-y|>1.$$

For any $\varepsilon\in(0,1)$, let $A=\bigcup_{n=1}^{\infty}B(x_n,r_n)$ be such that $x_n \in \R^d$ with $|x_n|=n^{k_0}$ and
$r_n=n^{-\frac{k_0}{\alpha}+\frac{1}{d}}$ for $n \ge 1$, where $k_0>\frac{2}{\varepsilon}$.
Suppose that
\begin{equation*}
V(x)=\begin{cases}
 \,\,1,\ \ \ \ \ \text{if}\ \ x \in A,\\
 |x|^\theta,\ \ \  \text{if}\ \ x \notin A
\end{cases}
\end{equation*} with some constant
$\theta>1$. Then $(T_t^V)_{t \ge 0}$ is not intrinsically ultracontractive.

However, there is a constant $c_0>0$ such that
\begin{equation}\label{ex3-0}
|\{x \in \R^d: x \in A, |x|\ge R\}|\le \frac{c_0}{ R^{\frac{d}{\alpha}-\varepsilon}},\quad R>2.
\end{equation}

\end{example}}

\ \

The remainder of this paper is arranged as follows. In Section
\ref{section2}, we will present sufficient conditions for the intrinsic
ultracontractivity of Feynman-Kac semigroup in terms of
intrinsic super Poincar\'e inequality, see Theorem \ref{p2-1}. These conditions are
interesting of themselves, and they work for general framework including local Dirichlet forms and non-local Dirichlet forms.
Section \ref{section3} is devoted to applying Theorem \ref{p2-1} to
yield general results about the
intrinsic ultracontractivity of the Feynman-Kac semigroups for non-local Dirichlet forms.
We use the probabilistic method and the iterated
approach to derive an explicit lower bound estimate for ground state
of the semigruoup $(T_t^V)_{t\ge0}$, e.g.\ Proposition \ref{p3-1}.
The intrinsic local super Poincar\'e inequality for the Dirichlet
form $(D^V,\D(D^V))$ is established in Proposition \ref{l3-4}.
Proofs of all the statements in Section \ref{section1} are presented in Section \ref{section4}, and proofs of Propositions \ref{p1-1} and \ref{p1-2} are given in Appendix.

\ \

\noindent {\bf Notation}\,\, Throughout this paper, let $d\ge 1$. By
$|x|$ we denote the Euclidean norm of $x\in\R^d$, and by $|A|$ the
Lebesgue measure of a Borel set $A$. Denote by $B(x,r)$ the ball
with center $x \in \R^d$ and radius $r>0$. For any $A$,
$B\subset\R^d$, let $dist (A,B)=\inf\{|x-y|: x\in A, y\in B\}$. We
will write $C=C(\kappa,\delta,\varepsilon,\lambda,\ldots)$ to
indicate the dependence of the constant $C$ on parameters. The
constants may change their values from one line to the next, even on
the same line in the same formula. Let
$B_b(\R^d)$ be the set of bounded measurable functions on $\R^d$. For any measurable functions $f$,
$g$ and any $\sigma$-finite measure $\mu$ on $\R^d$, we set $\langle
f, g\rangle_{L^2(\R^d; \mu)}:=\int f(x)g(x)\,\mu(dx)$, and for any $p\in[1,\infty)$, $\|f\|_{L^p(\R^d;\mu)}:=\big(\int |f(x)|^p\,\mu(dx)\big)^{1/p}$.
Denote by $\|f\|_{\infty}$ the $L^{\infty}(\R^d;dx)$-norm for any
bounded function $f$. For any increasing function $f$ on
$(0,\infty)$,  $f^{-1}(r):=\inf\{s>0: f(s)\ge r\}$ is its right
inverse.

\section{Intrinsic Ultracontractivity for General Dirichlet Forms}\label{section2}
The aim of this section is to present sufficient conditions for
intrinsic ultracontractivity of Feynman-Kac semigroup associated with general symmetric Dirichlet forms (including local Dirichlet forms).
Since we believe that the result below is interesting of itself and has wide applications, for sake of self-containedness we first introduce some necessary notations even if they are repeated by previous section.

 \ \

Let $(D,\mathscr{D}(D))$ be a regular symmetric Dirichlet form (not necessarily non-local) on $L^2(\R^d,dx)$ with core $C_c^2(\R^d)$, and let $V$ be a locally bounded non-negative measurable function on $\R^d$. Consider the following regular Dirichlet form with killing on $L^2(\R^d,dx)$:
$$ D^V(f,f) =D(f,f)+\int f^2(x)V(x)\,dx, \quad \mathscr{D}(D^V)= \overline{C_c^2(\R^d)}^{{D_1^V}},$$
where $${D_1^V}(f,f):={D^V(f,f)+\|f\|_{L^2(\R^d;dx)}^2}.$$ Denote by $(T_t^V)_{t\ge 0}$ the associated (Feynman-Kac) semigroup on $L^2(\R^d,dx)$.
To consider the intrinsic ultracontractivity of Feynman-Kac semigroup $(T_t^V)_{t\ge 0}$, we assume that
\begin{itemize}
\item[(A)] The Feynman-Kac semigroup $(T_t^V)_{t\ge 0}$ is compact on $L^2(\R^d,dx)$, and its ground state
$\phi_1$  corresponding to the first eigenvalue $\lambda_1>0$ is bounded, continuous and strictly positive.
\item[(B)] The potential function $V$ satisfies
\begin{itemize}
\item[{\bf (A2)}]  \emph{For every $r>0$, $$|\{x\in\R^d: V(x)\le r\}|<\infty.$$}

\item[{\bf (A4)}]
\emph{There exists a constant $K>0$ such that
$$\lim_{R \to\infty} \Phi(R)=\infty,$$
where
$$\Phi(R)=\Phi_K(R):=\inf_{|x|\ge R, V(x)>K} V(x),\quad \ R>0.$$}

\end{itemize}
\end{itemize}
According
to assumption {\bf (A2)}, $\big|\{x \in \R^d: V(x)\le
K\}\big|<\infty$. Therefore, assumption {\bf (A4)} means
that the potential function $V$ tends to infinity as $|x| \to
\infty$ on the complement of a set (maybe unbounded) with finite
Lebesgue measure. Obviously both {\bf (A2)} and {\bf (A4)} hold true when
\begin{equation}\label{e2-1}
\lim_{|x| \rightarrow \infty}V(x)=\infty.
\end{equation}

For the constant $K$ in assumption {\bf (A4)}, let
$$
\Theta(R)=\Theta_K(R):=\big|\{x \in \R^d: |x|\ge R, V(x)\le
K\}\big|,\quad R>0.
$$
On the other hand, due to the fact $\big|\{x \in \R^d: V(x)\le K\}\big|<\infty$, it is easy to
see that
\begin{equation*}
\lim_{R \to \infty}\Theta(R)=0.
\end{equation*} In particular, if \eqref{e2-1} holds, then for any constant $K>0$,
$\Theta(R)=0$ when $R>0$ is large enough.

\ \

Now, we sate the main result in this section.

\begin{theorem}\label{p2-1}
Let $(T_t^V)_{t\ge 0}$ be a compact Feynman-Kac semigroup on $L^2(\R^d,dx)$, and $V$ be a locally bounded non-negative measurable function such that Assumptions $(A)$ and $(B)$ are satisfied. Suppose that there exists a bounded
measurable function $\varphi \in B_b(\R^d)$ such that the following
conditions are satisfied.
\begin{enumerate}
\item[(1)] There is  a constant $r_0>0$ such that for every $r\ge r_0$, the following local intrinsic super Poincar\'e inequality
\begin{equation*}
\begin{split}
\int_{B(0,r)}f^2(x)\,dx\le & sD^V(f,f)\\
&+\alpha(r,s)\Big(\int |f|(x)\varphi(x)\,dx\Big)^2, \quad s>0,\
f \in \D(D^V)
\end{split}
\end{equation*}
holds for some positive measurable function $\alpha$.

\item[(2)] Let
$\phi_1$ be the ground state for the semigroup $(T_t^V)_{t\ge0}$.
It holds for some constant $C_0>0$ that
\begin{equation*}
\varphi(x)\le C_0\phi_1(x),\quad x\in\R^d.
\end{equation*}
\end{enumerate}
Then, the following intrinsic mixed type super Poincar\'e inequality
\begin{equation}\label{mixed}
\begin{split}
\int f^2(x)\,dx\le s D^V(f,f)&+\beta(s\wedge s_0) \left(\int|f|(x)\phi_1(x) \,dx\right)^2\\
&
+\gamma(s\wedge s_0)^{(p-2)/p}\|f\|_{L^p(\R^d;dx)}^2
\end{split}
\end{equation}
holds for all $s>0$, $f\in \D(D^V)$ and $p\in(2,\infty]$ with the rate functions
\begin{equation}\label{p2-1-5}
\beta(s):=C_0^2\alpha\left( \Phi^{-1}\left(\frac{2}{s}\right),
\frac{s}{2}\right),\quad \gamma(s):=
\Theta\left(\Phi^{-1}\left(\frac{2}{s}\right)\right)
\end{equation}
and constant  $s_0=\frac{2}{\Phi(r_0)}$. Here, we use the convention that
$(p-2)/p=1$ when $p=\infty$.

Moreover, we have \begin{enumerate}
\item[(i)] If \eqref{e2-1} holds, then the following super Poincar\'e inequality
\begin{equation}\label{p2-1-6}
\begin{split}
\int f^2(x)\,dx\le& s D^V(f,f)\\
&+\beta(s \wedge r_1) \left(\int |f|(x)\phi_1 (x)\,dx\right)^2
,\quad s>0, f\in \D(D^V)
\end{split}
\end{equation}
holds for some constant $r_1>0$. Consequently, if
\begin{equation*}\label{t2-1-1a}
\int_t^{\infty}\frac{\beta^{-1}(s)}{s}\,ds<\infty,\quad t>\inf
\beta,
\end{equation*}
then the semigroup $(T^V_t)_{t \ge 0}$ is intrinsically
ultracontractive.

\item[(ii)] If for some $p>2$ there is a constant $c_0>0$ such that the following Sobolev inequality holds true
\begin{equation}\label{p2-2-1}
\|f\|_{L^p(\R^d;dx)}^2 \le c_0\bigg[ D^V(f,f)+\|f\|_{L^2(\R^d;dx)}^2\bigg],\quad f \in C_c^{\infty}(\R^d),
\end{equation}
then the super Poincar\'e inequality \eqref{p2-1-6} holds with
the rate function $\beta$ and the constant $r_1$ replaced by
\begin{equation}\label{p2-2-3}
{\hat \beta(s)}:=2C_0^2\alpha\left( \Psi^{-1}\left(\frac{s}{4}\right),
\frac{s}{4}\right)
\end{equation} and some constant $r_2>0$ respectively,
where $$\Psi(R):=\frac{1}{\Phi(R)}+c_0\Theta(R)^{\frac{p-2}{p}},\quad R>1.$$ {Consequently, if}
\begin{equation*}\label{t2-1-1a}
\int_t^{\infty}\frac{{\hat \beta^{-1}(s)}}{s}\,ds<\infty,\quad t>\inf
\hat \beta,
\end{equation*}
then the semigroup $(T^V_t)_{t \ge 0}$ is intrinsically
ultracontractive.

\item[(iii)] If there exists a constant $\delta>1$ such that
\begin{equation}\label{p2-1-7}
\sum_{n=1}^{\infty}\gamma(s_n)\delta^n<\infty,
\end{equation}
where $s_n:=\beta^{-1}(\frac{c_1 \delta^n}{2})$ with
$c_1:=\|\phi_1\|_{\infty}^2$, then the following super Poincar\'e
inequality holds
\begin{equation}\label{p2-1-8}
\begin{split}
\int f^2(x)\,dx\le & s D^V(f,f)\\
&+\tilde \beta(s\wedge r_2) \left(
\int|f|(x)\phi_1(x)\,dx\right)^2 ,\quad s>0,\ f\in \D(D^V),
\end{split}
\end{equation}
where $r_2$ is a positive constant, $$ \tilde
\beta(s):=2\beta\left(\gamma^{-1}\left(\frac{1}{4\delta^{n_0(s)+1}}\right)\right)$$
and
\begin{equation}\label{p2-1-8a}
\begin{split}
n_0(s):=\inf\Bigg\{N \ge&\left(\log_\delta\Big(\frac{2\beta(s_0)}{c_1}\Big)\right)\vee\left(-\log_\delta\big(4\delta\gamma(s_0)\big)\right):\\
&\frac{4\delta(\sqrt{\delta}+1)s_{N}}{\sqrt{\delta}-1}
+2\gamma^{-1}\left(\frac{1}{4\delta^{N+1}}\right)\le s\Bigg\}.
\end{split}
\end{equation}
Consequently, if
\begin{equation*}\label{t2-1-2}
\int_t^{\infty}\frac{\tilde \beta^{-1}(s)}{s}\,ds<\infty,\quad
t>\inf \tilde \beta,
\end{equation*}
then the semigroup $(T^V_t)_{t \ge 0}$ is intrinsically
ultracontractive.
\end{enumerate}
\end{theorem}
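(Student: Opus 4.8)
The plan is to combine the local intrinsic super Poincaré inequality in hypothesis (1) with a capacity/truncation estimate for the region $\{|x|\ge R\}$, and then convert the resulting mixed-type inequality into the various forms (i)--(iii). First I would decompose any $f\in\D(D^V)$ as $f=f\I_{B(0,R)}+f\I_{B(0,R)^c}$ and estimate the two contributions to $\int f^2\,dx$ separately. On $B(0,R)$ the term is handled directly by hypothesis (1) with $r=R$: this produces $sD^V(f,f)$ plus $\alpha(R,s)(\int|f|\varphi\,dx)^2$, and by hypothesis (2) we may replace $\varphi$ by $C_0\phi_1$, turning the prefactor into $C_0^2\alpha(R,s)$. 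On the complement $B(0,R)^c$ I would split further along the level set of the potential: writing $B(0,R)^c=\big(B(0,R)^c\cap\{V>K\}\big)\cup\big(B(0,R)^c\cap\{V\le K\}\big)$, on the first set one has $V(x)\ge\Phi(R)$ so that $\int_{\{|x|\ge R,V>K\}}f^2\,dx\le \tfrac{1}{\Phi(R)}\int f^2 V\,dx\le \tfrac{1}{\Phi(R)}D^V(f,f)$, while on the second set, which has Lebesgue measure $\Theta(R)$, one either uses Hölder with the $L^p$ norm (giving $\Theta(R)^{(p-2)/p}\|f\|_{L^p}^2$) or, in the special case \eqref{e2-1}, the set is empty for large $R$ and this term disappears. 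Choosing $R=\Phi^{-1}(2/s)$ so that $1/\Phi(R)\le s/2$, absorbing the $\tfrac{1}{\Phi(R)}D^V(f,f)\le \tfrac{s}{2}D^V(f,f)$ term and rescaling $s\mapsto s/2$ in the local inequality, yields exactly \eqref{mixed} with the rate functions \eqref{p2-1-5} and the threshold $s_0=2/\Phi(r_0)$.

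For part (i), under \eqref{e2-1} the set $\{|x|\ge R,V\le K\}$ is empty for $R$ large, hence $\gamma(s)=0$ for small $s$; shrinking $s_0$ to some $r_1>0$ makes the $L^p$-term vanish in \eqref{mixed}, leaving the pure super Poincaré inequality \eqref{p2-1-6}. The implication ``$\int_t^\infty \beta^{-1}(s)/s\,ds<\infty\Rightarrow$ intrinsic ultracontractivity'' is then quoted from the functional-inequality machinery (\cite{Wang02,WW,WJ13,CW14}): a super Poincaré inequality for $(D^V,\D(D^V))$ with reference measure $\phi_1^2\,dx$ is equivalent, after the ground-state transform \eqref{e1}, to a super Poincaré inequality for $(\tilde T_t^V)$, which under the stated integrability of $\beta^{-1}$ gives ultracontractivity of $(\tilde T_t^V)$, i.e.\ intrinsic ultracontractivity of $(T_t^V)$. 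For part (ii), instead of dropping the $L^p$-term one feeds the Sobolev inequality \eqref{p2-2-1} into it: $\|f\|_{L^p}^2\le c_0(D^V(f,f)+\|f\|_{L^2}^2)$, so that the term $\gamma(s)^{(p-2)/p}\|f\|_{L^p}^2$ is bounded by $c_0\Theta(R)^{(p-2)/p}(D^V(f,f)+\int f^2\,dx)$; collecting the $D^V$-contributions and the $\int f^2$-contribution (the latter absorbed into the left side once the coefficient is $<1/2$, which forces the choice $R=\Psi^{-1}(s/4)$ with $\Psi(R)=1/\Phi(R)+c_0\Theta(R)^{(p-2)/p}$) gives \eqref{p2-1-6} with $\beta$ replaced by $\hat\beta$ in \eqref{p2-2-3}.

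Part (iii) is the delicate one and I expect it to be the main obstacle: here there is no Sobolev inequality and $\gamma$ need not vanish, so one must iterate. The idea is to run \eqref{mixed} at a geometric sequence of scales. Using $\|f\|_{L^\infty}^2$ in the $p=\infty$ version of \eqref{mixed}, one gets $\int f^2\le sD^V(f,f)+\beta(s)(\int|f|\phi_1\,dx)^2+\gamma(s)\|f\|_\infty^2$; but $\|f\|_\infty$ is not controlled, so instead one applies \eqref{mixed} to the truncations $f_n:=(|f|-\ell_n)_+$ at a dyadic sequence of levels $\ell_n$, exploiting that $D^V(f_n,f_n)\le D^V(f,f)$ (Markovian property of $D^V$) and that $f_n$ is supported where $|f|\ge\ell_n$, so $\|f_n\|_\infty\le\|f\|_\infty$ can be replaced by a comparison with $\int f_{n-1}^2\,dx$. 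Summing the resulting estimates with weights $\delta^n$, the convergence condition \eqref{p2-1-7} $\sum_n\gamma(s_n)\delta^n<\infty$ with $s_n=\beta^{-1}(c_1\delta^n/2)$ is exactly what makes the iteration close, producing the super Poincaré inequality \eqref{p2-1-8} with the (admittedly baroque) rate function $\tilde\beta$ and the index $n_0(s)$ of \eqref{p2-1-8a}; the bookkeeping in \eqref{p2-1-8a} — the constraints $N\ge\log_\delta(2\beta(s_0)/c_1)$, $N\ge-\log_\delta(4\delta\gamma(s_0))$, and $\tfrac{4\delta(\sqrt\delta+1)}{\sqrt\delta-1}s_N+2\gamma^{-1}(1/(4\delta^{N+1}))\le s$ — is precisely tracking the accumulated error from the geometric sum and choosing the optimal stopping level. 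Once \eqref{p2-1-8} is in hand, the final implication is again the standard one: $\int_t^\infty\tilde\beta^{-1}(s)/s\,ds<\infty$ yields ultracontractivity of the transformed semigroup $(\tilde T_t^V)$, hence intrinsic ultracontractivity of $(T_t^V)$. The truncation iteration and the careful tracking of constants through the geometric sum is where the real work lies; everything else is a matter of choosing $R$ as a function of $s$ and invoking known equivalences between super Poincaré inequalities and ultracontractivity.
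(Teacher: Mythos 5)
Your derivation of the mixed inequality and your treatment of parts (i) and (ii) match the paper's proof step by step: the same $B(0,R)$ versus $B(0,R)^c$ split, the same subdivision by the level set $\{V > K\}$, the same Chebyshev/H\"older bound on $\{V\le K\}$, the same substitutions $R=\Phi^{-1}(2/s)$ and $R=\Psi^{-1}(s/4)$, and the same appeal to the ground-state transform and the super Poincar\'e--ultracontractivity equivalence (\cite[Theorem 3.3.13]{WBook} or \cite[Theorem 3.1]{Wang00}).

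Part (iii) is where your sketch diverges from what actually works. You propose truncations $f_n=(|f|-\ell_n)_+$ and then assert that ``$\|f_n\|_\infty\le\|f\|_\infty$ can be replaced by a comparison with $\int f_{n-1}^2\,dx$.'' That step does not go through: \eqref{mixed} with $p=\infty$ produces a term $\gamma(s)\|f_n\|_\infty^2$, and for a one-sided truncation this is as uncontrolled as $\|f\|_\infty^2$ itself. The paper's mechanism is different in two places. First, it uses a layer-cake decomposition $\mu(f^2)=\int_0^\infty\mu(f^2>t)\,dt$ and \emph{two-sided} truncations
\begin{equation*}
f_n:=\bigl(f-\delta^{n/2}\bigr)^+\wedge\bigl(\delta^{(n+1)/2}-\delta^{n/2}\bigr),
\end{equation*}
which are bounded by construction, so that the $L^\infty$ term becomes the explicit quantity $\gamma(s)\,\delta^n(\sqrt\delta-1)^2$; the comparison $I_n\le\frac{\delta(\sqrt\delta+1)}{\sqrt\delta-1}\mu(f_{n-1}^2)$ is a separate bookkeeping step for the layer-cake increments, not a bound on $\|f_n\|_\infty$. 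Second, and crucially, the paper exploits the Cauchy--Schwarz/Chebyshev estimate
\begin{equation*}
\mu(\phi_1 f_n)^2\le\mu\bigl(\phi_1^2\I_{\{f^2>\delta^n\}}\bigr)\mu(f_n^2)\le c_1\delta^{-n}\mu(f_n^2),
\end{equation*}
which is precisely what justifies the choice $s_n=\beta^{-1}(c_1\delta^n/2)$: it makes $\beta(s_n)\mu(\phi_1 f_n)^2\le\tfrac12\mu(f_n^2)$, so that half of $\mu(f_n^2)$ can be absorbed into the left-hand side. Without this step there is no reason for the specific $s_n$ appearing in \eqref{p2-1-7} and \eqref{p2-1-8a}, so the condition \eqref{p2-1-7} would be unmotivated in your version. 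You correctly identify the role of $\sum_n D^V(f_n,f_n)\le D^V(f,f)$ (the paper cites \cite[Lemma 3.3.2]{WBook}) and that \eqref{p2-1-7} is what makes the geometric sum close, but the two missing ingredients above are load-bearing, not cosmetic, and would need to be supplied for part (iii) to be a proof rather than a plan.
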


\begin{proof} Throughout the proof, we denote by $\mu$ the Lebesgue measure on $\R^d$.
For the constant $K$ in assumption {\bf (A4)},
let $A_1:=\{x \in \R^d: V(x)> K\}$ and  $A_2:=\R^d\setminus A_1$.
For any $f \in \D(D^V)$, $R\ge r_0$ and $p\in(2,\infty]$, it holds that
\begin{equation}\label{ddd-222}
\begin{split}
\int_{B(0,R)^c}f^2(x)\,\mu(dx)&= \int_{B(0,R)^c\bigcap
A_1}f^2(x)\,\mu(dx)
+\int_{B(0,R)^c\bigcap A_2}f^2(x)\,\mu(dx)\\
&\le \frac{1}{\Phi(R)}
\int_{B(0,R)^c \bigcap A_1}f^2(x)V(x)\,\mu(dx)\\
&\quad+\mu(B(0,R)^c\cap A_2)^{(p-2)/p}\|f\|^2_{L^p(\R^d;dx)}\\
& \le \frac{1}{\Phi(R)}D^V(f,f)+\Theta(R)^{(p-2)/p}\|f\|_{L^p(\R^d;dx)}^2,
\end{split}
\end{equation} where in the first inequality we have used the H\"{o}lder inequality when $p\in(2,\infty)$.
This, along with conditions (1) and (2), gives us that for any $R$, $\tilde s>0$ and $f\in \D(D^V)$,
\begin{equation*}
\begin{split}
\mu(f^2)\le \left(\frac{1}{\Phi(R)}+\tilde s\right) D^V(f,f)+
C_0^2\alpha(R,\tilde s)\mu\big(\phi_1 |f|\big)^2+
\Theta(R)^{(p-2)/p}\|f\|_{L^p(\R^d;dx)}^2.
\end{split}
\end{equation*}
For any $0<s\le s_0:=\frac{2}{\Phi(r_0)}$, taking $R=\Phi^{-1}\left(\frac{2}{s}\right)$
and $\tilde s=\frac{s}{2}$ in the inequality above, we can get the
required mixed type super Poincar\'{e} inequality (\ref{mixed}) for all $s\in(0,
s_0]$. Hence, the proof of the first
assertion is completed by choosing $\beta(s)=\beta(s_0)$ and
$\gamma(s)=\gamma(s_0)$ for all $s\ge s_0$.

(i) We take $p=\infty$ in \eqref{ddd-222}. Suppose that (\ref{e2-1}) holds. Then $\Theta(R)=0$ for $R>0$
large enough. This immediately yields the true super Poincar\'{e}
inequality (\ref{p2-1-6}) with some constant $r_1>0$.

Let $(L^V, \mathscr{D}(L^V))$ be the generator associated with $(T^V_t)_{t\ge0}$, and
$(\tilde T_t^V)_{t\ge0}$ be the strongly continuous semigroup defined by (\ref{e1}).
Due to the fact that $L_V \phi_1=-\lambda_1 \phi_1$, the (regular)
Dirichlet form $(D_{\phi_1},\mathscr{D}(D_{\phi_1}))$ associated
with $(\tilde T_t^V)_{t \ge 0}$ enjoys the properties that,
$C_c^2(\R^d)$ is a core for  $(D_{\phi_1},\mathscr{D}(D_{\phi_1}))$,
and for any $f\in C_c^2(\R^d)$,
\begin{equation}\label{t2-1-1}
\begin{split}
D_{\phi_1}(f,f)=D^V(f\phi_1,f\phi_1)-\lambda_1 \int_{\R^d}
f^2(x)\phi_1^2(x)\,dx.
\end{split}
\end{equation}
Let $\mu_{\phi_1}(dx)=\phi_1^2(x)\,dx$. Combining (\ref{t2-1-1})
with (\ref{p2-1-6}) gives us the following intrinsic super Poincar\'e inequality
\begin{equation*}\label{t2-1-2a}
\begin{split}
\mu_{\phi_1}(f^2)&\le  s D^V(f\phi_1,f\phi_1)+
\beta(s\wedge r_1) \Big(\int |f|(x)\phi^2_1(x)\,dx\Big)^2\\
&\le s \Big(D_{\phi_1}(f,f)+\lambda_1\mu_{\phi_1}(f^2)\Big)+\beta(s\wedge r_1) \mu_{\phi_1}^2(|f|),
\quad s>0,
\end{split}
\end{equation*}
where the rate function $\beta(s)$ is given by (\ref{p2-1-5}). In
particular, for any $s\in(0,1/(2\lambda_1))$,
\begin{equation*}
\mu_{\phi_1}(f^2)\le 2sD_{\phi_1}(f,f)+2\beta({s}\wedge r_1),\quad f\in C_c^2(\R^d),
\end{equation*}which implies that
\begin{equation*}
\mu_{\phi_1}(f^2)\le sD_{\phi_1}(f,f)+2\beta\Big(\frac{s}{2}\wedge r_1\wedge \frac{1}{\lambda_1}\Big),\quad f\in C_c^2(\R^d), s>0.
\end{equation*}

Therefore, the desired assertion in (i) for the ultracontractivity of the (Markovian)
semigroup $(\tilde T_t^V)_{t \ge 0}$ (or, equivalently, the intrinsic ultracontractivity of the semigroup $(T_t^V)_{t \ge 0}$)
follows from \cite[Theorem
3.3.13]{WBook} or \cite[Theorem 3.1]{Wang00}.

(ii) Suppose \eqref{p2-2-1} holds true. According to \eqref{ddd-222} and \eqref{p2-2-1}, for any $f \in C_c^\infty(\R^d)$ and $R\ge r_0$,
\begin{equation*}
\begin{split}
\int_{B(0,R)^c}f^2(x)\,\mu(dx)&\le  \left( \frac{1}{\Phi(R)}+c_0\Theta(R)^{({p-2})/{p}}\right)
D^V(f,f)+c_0\Theta(R)^{({p-2})/{p}}\mu(f^2),
\end{split}
\end{equation*}
Combining this with conditions (1) and (2) in Theorem \ref{p2-1},
we have that for any $R\ge R_1\vee r_0$ with $c_0\Theta(R_1)^{({p-2})/{p}}\le{1}/{2}$, each $\tilde s>0$ and $f\in C_c^\infty(\R^d)$,
\begin{equation*}
\begin{split}
\mu(f^2) &\le 2\left(\frac{1}{\Phi(R)}+c_0\Theta(R)^{{(p-2)/}{p}}+\tilde s\right) D^V(f,f)+
2C_0^2\alpha(R,\tilde s)\mu\big(\phi_1 |f|\big)^2\\
&=2\left(\Psi(R)+\tilde s\right) D^V(f,f)+2C_0^2\alpha(R,\tilde s)\mu\big(\phi_1 |f|\big)^2.
\end{split}
\end{equation*}
Taking $R=\Psi^{-1}\left(\frac{s}{4}\right)$
and $\tilde s=\frac{s}{4}$ in the inequality above for $s>0$ small enough, we can get the super Poincar\'e inequality \eqref{p2-1-6} with the desired rate function $\hat{\beta}$ given by \eqref{p2-2-3}, also thanks to the fact that $C_c^\infty(\R^d)$ is dense in $\mathscr{D}(D^V).$
Having this at hand, we can arrive at the final assertion for (ii) by the same argument as that in the proof of part (i).

(iii) Now we take $p=\infty$ in \eqref{ddd-222}, and assume that (\ref{p2-1-7}) holds. Note that, for every $f
\in \D(D^V)$, $|f|\in \D(D^V)$ and $D^V(|f|,|f|)\le D^V(f,f)$, and
so it suffices to prove that (\ref{p2-1-8}) holds for any $f \in
\D(D^V)$ with $f \ge 0$.

Given any $f \in \D(D^V)$ with $f \ge 0$ and $\mu(f^2)=1$, for any $\delta>1$, we have
\begin{equation}\label{aaa}
\begin{split}
\mu(f^2)&=\int_0^{\infty}\mu(f^2>t)\,dt\\
&=\int_0^{\delta^{n_0+1}}\mu(f^2>t)\,dt
+\sum_{n=n_0+1}^{\infty}\int_{\delta^n}^{\delta^{n+1}}\mu(f^2>t)\,dt\\
&\le \mu\big((f\wedge \delta^{\frac{n_0+1}{2}})^2\big)+
\sum_{n=n_0+1}^{\infty}(\delta^{n+1}-\delta^{n})\mu(f^2>\delta^n)\\
&=:J_{n_0}+\sum_{n=n_0+1}^{\infty}I_n,
\end{split}
\end{equation}
where $n_0$ is an integer to be determined later.

Next, we define
\begin{equation*}
f_n:=\big(f-\delta^{\frac{n}{2}}\big)^+\wedge
\big(\delta^{\frac{n+1}{2}}-\delta^{\frac{n}{2}}\big),\quad n\ge0.
\end{equation*}
Noticing that for any $n\ge0$, $$f_n \ge
\big(\delta^{\frac{n+1}{2}}-\delta^{\frac{n}{2}}\big)\I_{\{f^2>\delta^{n+1}\}},$$
we get \begin{equation}\label{p2-1-11}
\begin{split}
I_n&=(\delta^{n+1}-\delta^{n})\mu(f^2>\delta^n)\\
&\le
\frac{(\delta^{n+1}-\delta^{n})\mu(f_{n-1}^2)}{\big(\delta^{\frac{n}{2}}-\delta^{\frac{n-1}{2}}\big)^2}\\
&=\frac{\delta(\sqrt{\delta}+1)}{\sqrt{\delta}-1}\mu(f_{n-1}^2),\quad
n\ge0.
\end{split}
\end{equation}
According to \eqref{mixed} and the fact that $f_n \in \D(D^V)$, for all
$n\ge0$ and $0<s\le s_0$,
\begin{equation}\label{p2-1-10}
\mu(f_n^2)\le sD^V(f_n,f_n)+\beta(s)\mu(\phi_1
f_n)^2+\gamma(s)\delta^n(\sqrt{\delta}-1)^2.
\end{equation}
Due to the Cauchy-Schwarz inequality, the Chebyshev inequality and the fact that $\mu(f^2)=1$,
\begin{equation*}
\begin{split}
\mu(\phi_1 f_n)^2&=\mu(\phi_1 f_n \I_{\{f^2>\delta^n\}})^2\le
\mu(\phi_1^2\I_{\{f^2>\delta^n\}})\mu(f_n^2)\le
c_1\delta^{-n}\mu(f_n^2).
\end{split}
\end{equation*}
 Then, taking $s=s_n:=\beta^{-1}\left(\frac{c_1\delta^n}{2}\right)$ with $n\ge\log_\delta\Big(\frac{2\beta(s_0)}{c_1}\Big)$
in \eqref{p2-1-10}, we obtain
\begin{equation*}
\mu(f_n^2)\le s_n
D^V(f_n,f_n)+\frac{1}{2}\mu(f_n^2)+(\sqrt{\delta}-1)^2\gamma(s_n)\delta^n,
\end{equation*}
which implies that
\begin{equation*}\label{p2-1-10a}
\mu(f_n^2)\le 2s_n
D^V(f_n,f_n)+2(\sqrt{\delta}-1)^2\gamma(s_n)\delta^n.
\end{equation*}
Since (\ref{p2-1-7}) holds true, there exists an integer $$n_0\ge\left(\log_\delta\Big(\frac{2\beta(s_0)}{c_1}\Big)\right)\vee\left(-\log_\delta\big(4\delta\gamma(s_0)\big)\right)$$
such that
$${\delta({\delta}-1)}\sum_{i=n_0}^{\infty}\gamma(s_i)\delta^i\le \frac{1}{8}.$$

Furthermore, it is easy to see that
\begin{equation*}
\sum_{n=0}^{\infty}|f_n(x)-f_n(y)|\le |f(x)-f(y)|,\ \ \sum_{n=0}^{\infty}|f_n(x)|\le |f(y)|,
\end{equation*}
so, according to \cite[Lemma 3.3.2]{WBook},
$$
\sum_{n=0}^{\infty}D^V(f_n,f_n)\le D^V(f,f).
$$

Combining all the estimates above with (\ref{p2-1-11}), and noting that $s_n$ is
non-increasing with respect to $n$, we arrive at
\begin{equation}\label{p2-1-11a}
\sum_{n=n_0+1}^{\infty}I_n\le\frac{2\delta(\sqrt{\delta}+1)s_{n_0}}{\sqrt{\delta}-1}
D^V(f,f)+\frac{1}{4}.
\end{equation}

On the other hand, applying $f\wedge \delta^{\frac{n_0+1}{2}}$ into
(\ref{p2-1-6}), we have
\begin{equation*}
\begin{split}
J_{n_0}&=\mu\big((f \wedge \delta^{\frac{n_0+1}{2}})^2\big)\\
& \le s D^V(f \wedge \delta^{\frac{n_0+1}{2}},f \wedge
\delta^{\frac{n_0+1}{2}})
+\beta(s)\mu(\phi_1 f)^2+\gamma(s)\delta^{n_0+1}\\
&\le sD^V(f,f)+\beta(s)\mu(\phi_1 f)^2+\gamma(s)\delta^{n_0+1},\quad 0<s\le s_0,
\end{split}
\end{equation*}
where the second inequality also follows from \cite[Lemma
3.3.2]{WBook}. Hence, noticing that
 $n_0\ge -\log_\delta\big(4\delta\gamma(s_0)\big)$ and taking
$s=\gamma^{-1}\left(\frac{1}{4\delta^{n_0+1}}\right)$ in the
inequality above, we get that
\begin{equation}\label{p2-1-12}
\begin{split}
J_{n_0}\le
\gamma^{-1}\left(\frac{1}{4\delta^{n_0+1}}\right)D^V(f,f)+
\beta\left(\gamma^{-1}\left(\frac{1}{4\delta^{n_0+1}}\right)\right)
\mu(\phi_1 f)^2+\frac{1}{4}.
\end{split}
\end{equation}
According to \eqref{aaa}, (\ref{p2-1-11a}) and (\ref{p2-1-12}), we
obtain
\begin{equation*}
\begin{split}
\mu(f^2)\le& \left(\frac{2\delta(\sqrt{\delta}+1)s_{n_0}}{\sqrt{\delta}-1}
+\gamma^{-1}\left(\frac{1}{4\delta^{n_0+1}}\right)\right)D^V(f,f)\\
&+
\beta\left(\gamma^{-1}\left(\frac{1}{4\delta^{n_0+1}}\right)\right)
\mu(\phi_1 f)^2+\frac{1}{2}.
\end{split}
\end{equation*}
Since $\mu(f^2)=1$,  this implies that
\begin{equation*}
\begin{split}
\mu(f^2)&\le \left(\frac{4\delta(\sqrt{\delta}+1)s_{n_0}}{\sqrt{\delta}-1}
+2\gamma^{-1}\left(\frac{1}{4\delta^{n_0+1}}\right)\right)D^V(f,f)\\
&
\quad +
2\beta\left(\gamma^{-1}\left(\frac{1}{4\delta^{n_0+1}}\right)\right)
\mu(\phi_1 f)^2
\end{split}
\end{equation*}
Hence, for $s>0$ small enough, we arrive at the desired super
Poincar\'{e} inequality by taking $n_0$ to be $n_0(s)$ defined by
(\ref{p2-1-8a}). The intrinsic ultracontractivity of  $(T_t^V)_{t
\ge 0}$ is easily verified by following the argument of (i).
\end{proof}

\begin{remark}
To derive the intrinsic ultracontractivity of Feynman-Kac semigroups (or Dirichlet semigroups), Rosen's lemma in the context of super log-Sobolev inequality was applied in \cite{CS,DS}.
Instead of such approach, in Theorem \ref{p2-1} we use super Poincar\'e inequality. The main advantage of our method is due to that,
 a mixed type super Poincar\'e inequality can be applied to study the situation that Sobolev inequality (\ref{p2-2-1}) fails, e.g.\
a symmetric $\alpha$-stable process on $\R$ with $\alpha \ge 1$, for which the method of \cite{CS} does not work.
\end{remark}

In Theorem \ref{p2-1}, we essentially use the lower bound estimate for the ground state. On the contrary, at the end of this section we present the following sufficient conditions for a upper bound estimate of the ground state.

\begin{proposition}\label{pro--00}  Suppose that the semigroup
$(T^V_t)_{t \ge 0}$ is intrinsically ultracontractive. Let $(L^V, \mathscr{D}(L^V))$ be the generator associated with $(T^V_t)_{t\ge0}$. If there
exist a positive function $\psi \in C_b^2(\R^d)\bigcap$ $ L^2(\R^d;dx)$ and a constant
$\lambda>0$ such that $\psi \in \D(L^V)$,
\begin{equation}\label{t2-1-0}
L^V\psi(x)\le \lambda \psi(x),\quad  x \in \R^d,
\end{equation}
then there is a constant $c_1>0$ such that \begin{equation*}\label{t2-1-0a}
\phi_1(x)\le c_1\psi(x),\quad x \in \R^d.
\end{equation*}

\end{proposition}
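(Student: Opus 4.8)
The plan is to combine a \emph{supersolution estimate} for $(T_t^V)_{t\ge0}$, coming directly from \eqref{t2-1-0}, with the \emph{return to equilibrium} of the intrinsic Markov semigroup $(\tilde T_t^V)_{t\ge0}$ defined in \eqref{e1}, which is available precisely because $(T_t^V)_{t\ge0}$ is intrinsically ultracontractive. The upshot will be that at one fixed (large) time $t_1$ one has, almost everywhere,
\begin{equation*}
\tfrac12\, a\, e^{-\lambda_1 t_1}\,\phi_1(x)\ \le\ T_{t_1}^V\psi(x)\ \le\ e^{\lambda t_1}\psi(x),\qquad a:=\int_{\R^d}\psi(x)\phi_1(x)\,dx\in(0,\infty),
\end{equation*}
from which $\phi_1\le c_1\psi$ with $c_1=2a^{-1}e^{(\lambda+\lambda_1)t_1}$ follows (everywhere, since $\phi_1$ and $\psi$ are continuous).

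First I would establish the supersolution bound $T_t^V\psi\le e^{\lambda t}\psi$ in $L^2(\R^d;dx)$ for all $t\ge0$. Since $\psi\in\D(L^V)$, the orbit $t\mapsto T_t^V\psi$ is $C^1$ in $L^2(\R^d;dx)$ with $\frac{d}{dt}T_t^V\psi=T_t^V L^V\psi$, hence
\begin{equation*}
\frac{d}{dt}\big(e^{-\lambda t}T_t^V\psi\big)=e^{-\lambda t}\,T_t^V\big(L^V\psi-\lambda\psi\big)\le 0,
\end{equation*}
because $T_t^V$ is positivity preserving and $L^V\psi-\lambda\psi\le0$ by \eqref{t2-1-0}; integrating from $0$ to $t$ gives $e^{-\lambda t}T_t^V\psi\le\psi$. (One may instead argue probabilistically via Dynkin's formula: $T_t^V\psi(x)=\psi(x)+\int_0^t\Ee^x(e^{-\int_0^sV(X_r)\,dr}L^V\psi(X_s))\,ds\le\psi(x)+\lambda\int_0^t T_s^V\psi(x)\,ds$, and Gronwall gives the estimate pointwise.)

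Next I would exploit intrinsic ultracontractivity in its equivalent form: $(\tilde T_t^V)_{t\ge0}$ is ultracontractive on $L^2(\R^d;\phi_1^2\,dx)$. Set $g:=\psi/\phi_1$; then $\int g^2\phi_1^2\,dx=\|\psi\|_{L^2(\R^d;dx)}^2<\infty$, so $g\in L^2(\R^d;\phi_1^2\,dx)$, and $\int g\,\phi_1^2\,dx=a$. The semigroup $(\tilde T_t^V)_{t\ge0}$ is symmetric and Markovian with invariant probability measure $\phi_1^2\,dx$ (note $\|\phi_1\|_{L^2(\R^d;dx)}=1$), and its spectrum is $\{e^{-(\lambda_n-\lambda_1)t}\}_{n\ge1}$ with eigenfunctions $\phi_n/\phi_1$, so it has spectral gap $\lambda_2-\lambda_1>0$. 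Combining the $L^2$-decay $\|\tilde T_s^V(g-a)\|_{L^2(\phi_1^2\,dx)}\le e^{-(\lambda_2-\lambda_1)s}\|g-a\|_{L^2(\phi_1^2\,dx)}$ with the ultracontractivity bound $\|\tilde T_{s_0}^V\|_{L^2(\phi_1^2\,dx)\to L^\infty(\phi_1^2\,dx)}<\infty$ (via $\tilde T_t^V=\tilde T_{s_0}^V\circ\tilde T_{t-s_0}^V$) yields $\|\tilde T_t^V g-a\|_{L^\infty}\to0$ as $t\to\infty$. Hence there is $t_1>0$ with $\tilde T_{t_1}^V g\ge a/2$ a.e.; unwinding \eqref{e1}, $T_{t_1}^V\psi=e^{-\lambda_1 t_1}\phi_1\,\tilde T_{t_1}^V g\ge\tfrac12 a\,e^{-\lambda_1 t_1}\phi_1$ a.e.

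Putting the two bounds together at $t=t_1$ gives the displayed chain of inequalities, hence the proposition. The main obstacle is the last step of the return-to-equilibrium argument, i.e.\ producing the \emph{uniform} lower bound $\tilde T_{t_1}^V g\ge a/2$: this is classical once ultracontractivity and a spectral gap are in hand, but it genuinely uses the discreteness of the spectrum of $L^V$ for the gap, and — crucially for this proof — it only delivers the lower bound for $t$ large, which is exactly what is needed here. By contrast the supersolution step is routine, and the positivity $a>0$ is immediate since $\phi_1$ and $\psi$ are strictly positive and both lie in $L^2(\R^d;dx)$.
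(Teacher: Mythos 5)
Your proof is correct, but it takes a genuinely different route from the paper's. Your first step (the supersolution bound $T_t^V\psi\le e^{\lambda t}\psi$) coincides exactly with the paper's. Where you diverge is the second step: the paper simply cites Davies--Simon \cite[Theorem 3.2]{DS}, according to which intrinsic ultracontractivity implies the two-sided heat-kernel bound $p^V(t,x,y)\ge c_t\phi_1(x)\phi_1(y)$ for every $t>0$, and then reads off
$\psi(x)\ge e^{-\lambda}T_1^V\psi(x)=e^{-\lambda}\int p^V(1,x,y)\psi(y)\,dy\ge c_1 e^{-\lambda}\bigl(\int\psi\phi_1\,dy\bigr)\phi_1(x)$
in one line. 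You instead reprove (for the single function $g=\psi/\phi_1$) what that cited result encodes: you combine the spectral gap $\lambda_2-\lambda_1>0$ of the compact operator with ultracontractivity of $\tilde T_t^V$ to get uniform return to equilibrium $\|\tilde T_t^V g-a\|_\infty\to 0$, hence a pointwise lower bound $\tilde T_{t_1}^V g\ge a/2$ at one fixed time. This is, in effect, the internal mechanism of the Davies--Simon theorem carried out by hand. The paper's version is shorter by deferring to that reference; yours is more self-contained and makes visible exactly where conservativity ($\tilde T_t^V 1=1$), orthogonality of $g-a$ to constants, and the discreteness of the spectrum are used. Both are sound, and your closing observation that only one (large) time $t_1$ is needed is precisely why the argument closes.

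One minor point worth making explicit if you write this up: after unwinding \eqref{e1} you obtain $\phi_1\le c_1\psi$ only almost everywhere, and you correctly note that continuity of $\phi_1$ and $\psi$ upgrades this to all $x\in\R^d$; the paper's route gives the pointwise bound directly because the heat-kernel inequality from \cite{DS} is stated for all $x,y$.
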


\begin{proof}
Under \eqref{t2-1-0}, we know that $$T_t^V\psi(x)\le e^{\lambda t}\psi(x),\quad x\in\R^d, t>0.$$
According to \cite[Theorem 3.2]{DS},   the intrinsic ultracontractivity of
$(T_t^V)_{t \ge 0}$ implies that for every $t>0$, there is a constant $c_t>0$ such that
\begin{equation*}
p^V(t,x,y)\ge c_t\phi_1(x)\phi_1(y),\quad \ x,y\in \R^d.
\end{equation*}
Therefore,
\begin{equation*}
\begin{split}\psi(x)&\ge e^{-\lambda} T_1^V\psi(x)= e^{-\lambda}  \int p^V(1,x,y)\psi(y)\,dy\\
&\ge c_1e^{-\lambda} \int \psi(y)\phi_1(y)\,dy\phi_1(x)=:C_0\phi_1(x),\end{split}
\end{equation*} which yields the required assertion.\end{proof}

\section{Intrinsic Ultracontractivity for Non-local Dirichlet Forms}\label{section3}
In this section, we come back to the framework introduced in Subsection \ref{subsection1-1} which is recalled below.  Let $(D,\mathscr{D}(D))$ be the non-local Dirichlet form given in \eqref{non-local} such that jump kernel satisfies (\ref{e3-1}) and
(\ref{e3-2}),
i.e., there exist $\alpha_1, \alpha_2\in(0,2)$ with $\alpha_1\le\alpha_2$
and positive $c_1, c_2, \kappa$ such that
\begin{equation*}
c_1|x-y|^{-d-\alpha_1}\le J(x,y)\le c_2|x-y|^{-d-\alpha_2},\quad
0<|x-y|\le \kappa
\end{equation*} and
\begin{equation*}
\sup_{x \in \R^d}\int_{\{|y-x|>\kappa\}}J(x,y)\,dy<\infty.\end{equation*}
Assume that the corresponding symmetric Hunt process $((X_t)_{t\ge0}, \Pp^x)$ is well defined for all $x\in\R^d$, and that the process $(X_t)_{t\ge0}$ possesses a  positive, bounded and continuous density function $p(t,x,y)$ on $\R^d\times \R^d$ for all $t>0$. That is, assumption {\bf(A1)} holds. Note that, when $J(x,y)=0$ for any $x$, $y\in\R^d$ with $|x-y|>
\kappa$, the process $(X_t)_{t\ge0}$ has finite range jumps.

Let $(T_t)_{t\ge0}$ be a symmetric semigroup associated with $(D,\mathscr{D}(D))$, i.e.\
 \begin{equation*} T_t(f)(x)=\Ee^x\left(f(X_t)\right),\,\, x\in\R^d,
f\in L^2(\R^d;dx).\end{equation*} Let $V$ be a non-negative measurable and locally bounded potential
function on $\R^d$ such that Assumption {\bf (A2)} is satisfied. Define the Feynman-Kac semigroup
$(T^V_t)_{t\ge0}$ associated with the Hunt process $(X_t)_{t \ge 0}$ as
follows:
\begin{equation*} T^V_t(f)(x)=\Ee^x\left(\exp\Big(-\int_0^tV(X_s)\,ds\Big)f(X_t)\right),\,\, x\in\R^d,
f\in L^2(\R^d;dx).\end{equation*} Then, the non-local regular Dirichlet form associated with $(T_t^V)_{t\ge0}$ is given by
\begin{equation*}
\begin{split}
D^V(f,f)&=\frac{1}{2} \iint\big(f(x)-f(y)\big)^2J(x,y)\,dx\,dy+\int
f^2(x)V(x)\,dx, \\
\mathscr{D}(D^V)&= \overline{C_c^1(\R^d)}^{{D_1^V}},
\end{split}
\end{equation*}
where ${D_1^V}(f,f):={D^V(f,f)+\|f\|_{L^2(\R^d;dx)}^2}.$  According to Propositions \ref{p1-1} and \ref{p1-2}, we know that the Feynman-Kac semigroup $(T_t^V)_{t\ge 0}$ is compact on $L^2(\R^d,dx)$, and it has a bounded, continuous and strictly positive ground state $\phi_1$ corresponding to the first eigenvalue $\lambda_1>0$.

\ \

In the following, we will apply Theorem \ref{p2-1} to establish the intrinsic ultracontractivity of Feynman-Kac semigroup $(T_t^V)_{t\ge 0}$. For this, we need
obtain some nice lower bound estimate for the ground state
$\phi_1$, and derive local super Poincar\'{e} inequality or Sobolev inequality for the Dirichlet form $(D^V,\mathscr{D}(D^V))$. These will be considered in the following three subsections respectively, and then general results about the intrinsic ultracontractivity of Feynman-Kac semigroup for non-local Dirichlet forms are presented in Subsection \ref{subsection3-4}.

\subsection{Lower bound estimate for the ground state}
For any Borel set $D \subseteq \R^d$, let
$\tau_D:=\inf\{t>0:\ X_t\notin D\}$ be the first exit time from $D$ of the process $(X_t)_{t\ge0}$. Denote by $B(x,r)$ the ball with center at $x\in\R^d$ and radius $r>0$.
\begin{lemma}\label{l3-1}
There exist positive constants $c_0:=c_0(\kappa)$ and
$r_0:=r_0(\kappa)$ such that for every $r \in (0,r_0]$ and $x \in
\R^d$, we have
\begin{equation}\label{l3-1-1}
\Pp^x\left(\tau_{B(x,r)}>c_0
r^{\alpha_2+\frac{(\alpha_2-\alpha_1)d}{\alpha_1}} \right)\ge \frac{1}{2}.
\end{equation}
\end{lemma}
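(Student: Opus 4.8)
The plan is to obtain the lower bound on the exit probability from a small ball by controlling the expectation $\Ee^x(\tau_{B(x,r)})$ from below and, if needed, a second moment from above, then conclude via a Chebyshev/Paley--Zygmund type argument. More concretely, I would first use the Dynkin formula (or the L\'evy system of the process) together with the upper bound in \eqref{e3-1} and \eqref{e3-2} to estimate the rate at which the process leaves a ball. For a point $y \in B(x,r/2)$ the total jump intensity out of $B(x,r)$ is controlled by $\int_{\{|z|\ge r/2\}} J(y,y+z)\,dz$, which by \eqref{e3-1} (for the part with $r/2 \le |z| \le \kappa$) is bounded by a constant multiple of $r^{-\alpha_2}$ when $r \le \kappa$, plus a uniformly bounded contribution from \eqref{e3-2}; shrinking $r_0$ further absorbs the latter term. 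This gives $\Ee^y(\tau_{B(x,r)}) \ge c\, r^{\alpha_2}$ for all $y$ in the smaller concentric ball, but one must be careful: to leave $B(x,r)$ one either makes a big jump or the process has to first travel across the annulus by small jumps, and the small-jump part is what forces the extra exponent $\frac{(\alpha_2-\alpha_1)d}{\alpha_1}$.

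The cleaner route, which I expect the authors to take, is a test-function argument: choose a smooth cutoff $h$ supported in $B(x,r)$, equal to $1$ on $B(x,r/2)$, with $\|\nabla h\|_\infty \lesssim r^{-1}$ and $\|\nabla^2 h\|_\infty \lesssim r^{-2}$, and estimate $L h$ pointwise using the representation of the generator under assumption {\bf (A3)}-type bounds — but since {\bf (A3)} is not assumed here, one instead works directly with $\Pp^x(X_{t\wedge\tau_{B(x,r)}} \in B(x,r/2)^c)$ via the Dynkin formula applied to $h$, splitting the generator's jump integral into small jumps ($|z|\le r$), which contribute at most $c\, r^{-2}\cdot r^{\alpha'}$-type terms after integrating $(1\wedge |z|^2/r^2)$ against $J$, and large jumps, bounded by the exit intensity above. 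The worst exponent coming out of integrating $|z|^2 |z|^{-d-\alpha_1}$ over $|z|\le r$ is $r^{2-\alpha_1}$, so $\|Lh\|_\infty \lesssim r^{-\alpha_1} + r^{-\alpha_2} \asymp r^{-\alpha_2}$ for small $r$. Then $\Pp^x(\tau_{B(x,r/2)} \le t) \le \Pp^x(\sup_{s\le t} |h(X_s) - h(x)| \ge 1) \le t \|Lh\|_\infty + (\text{martingale term})$, which after a standard maximal inequality gives $\Pp^x(\tau_{B(x,r)} \le t) \le C t r^{-\alpha_2}$; choosing $t = c_0 r^{\alpha_2}$ with $c_0$ small makes this $\le 1/2$.

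The discrepancy with the exponent in \eqref{l3-1-1} is the subtle point: the statement has $\alpha_2 + \frac{(\alpha_2-\alpha_1)d}{\alpha_1}$, not $\alpha_2$, so the naive argument above is too optimistic when $\alpha_1 < \alpha_2$ — the variable-order nature of $J$ means the lower bound \eqref{e3-1} with exponent $\alpha_1$ forces a worse control on how fast the process can diffuse out. I would handle this by an iteration / chaining over dyadic scales inside the annulus: to exit $B(x,r)$ the process must successively cross $\sim \log(1/r)$ shells, and at the finest scale the relevant time-scale is governed by $\alpha_1$ rather than $\alpha_2$, which upon multiplying the scale factors produces exactly the exponent $\alpha_2 + (\alpha_2-\alpha_1)d/\alpha_1$. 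Concretely one uses heat-kernel-type near-diagonal lower bounds $p(t,y,z) \ge c\, t^{-d/\alpha_1}$ for $|y-z|^{\alpha_1} \le t \le$ (something), available from \eqref{e3-1} and the parabolic Harnack-type estimates in the cited references \cite{CK,CKK2,BBCK}, to show the process stays in $B(x,r)$ with probability $\ge 1/2$ up to time $\sim c_0 r^{\alpha_2 + (\alpha_2-\alpha_1)d/\alpha_1}$. I expect the main obstacle to be precisely this bookkeeping of exponents — getting the right power through the scaling/iteration argument and making sure all constants depend only on $\kappa$ (and the fixed structural constants $c_1, c_2, \alpha_1, \alpha_2, d$) and not on $r$ or $x$, which is where the uniformity in $x$ (guaranteed by \eqref{e3-2}) and the restriction $r \le r_0(\kappa)$ get used.
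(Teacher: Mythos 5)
The paper's own proof of this lemma is essentially a one-line citation: it invokes \cite[Theorem~2.1]{BKK}, which provides the off-the-shelf bound $\Pp^x(\tau_{B(x,r)}<t)\le C_1\,t\,L(r)$ for the functional $L(s)=L_1(s)+s^d\big(s^{-2}L_2(s)\big)^{(d+\alpha_1)/\alpha_1}$, with $L_1(s)=\sup_x\int_{\{|y-x|>s\}}J(x,y)\,dy$ and $L_2(s)=\sup_x\int_{\{|y-x|\le s\}}|x-y|^2J(x,y)\,dy$. Conditions \eqref{e3-1}--\eqref{e3-2} give $L_1(r)\lesssim r^{-\alpha_2}$ and $r^{-2}L_2(r)\lesssim r^{-\alpha_2}$, hence $L(r)\lesssim r^{-\alpha_2-(\alpha_2-\alpha_1)d/\alpha_1}$, and the lemma follows by choosing $c_0$ small. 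Your proposal does not identify this result, and the chaining/iteration argument you sketch in its place is not developed enough to see that it produces exactly the exponent $\alpha_2+(\alpha_2-\alpha_1)d/\alpha_1$; a naive dyadic scaling with time scale $s^{\alpha_1}$ at scale $s$ does not telescope to that power, and making such an argument rigorous would essentially amount to reproving a version of \cite[Theorem~2.1]{BKK} from scratch.

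There is also a conceptual slip: you claim the naive test-function argument (giving the time scale $r^{\alpha_2}$) is ``too optimistic.'' But since $r\le r_0<1$ and $\alpha_2+(\alpha_2-\alpha_1)d/\alpha_1\ge\alpha_2$, the lemma's time scale $r^{\alpha_2+(\alpha_2-\alpha_1)d/\alpha_1}$ is \emph{smaller} than $r^{\alpha_2}$, so the lemma asserts less; if your naive argument were valid, it would yield the lemma a fortiori, not contradict it. The genuine obstruction — which you do briefly note but then abandon — is that {\bf (A3)} is not assumed here, so there is no pointwise generator formula and Dynkin's formula cannot be applied directly to a $C^2_c$ test function. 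One must work inside the Dirichlet-form framework, which is precisely what \cite{BKK} does: the extra factor $(\alpha_2-\alpha_1)d/\alpha_1$ in the exponent is the price of that framework, coming from the Nash/heat-kernel on-diagonal bound $t^{-d/\alpha_1}$ (using the \emph{lower} bound of $J$) combined with the escape-rate functionals $L_1, L_2$ (using the \emph{upper} bound). So your intuition about the interplay of $\alpha_1$ and $\alpha_2$ is in the right neighborhood, but the correct mechanism is the Nash-inequality-based exit estimate of \cite{BKK}, not a dyadic chaining of crossings.
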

\begin{proof}
For any $0<s<\kappa$, set
\begin{equation*}
\begin{split}
& L_1(s):=\sup_{x \in \R^d}\int_{\{|y-x|>s\}}J(x,y)\,dy,\\
& L_2(s):=\sup_{x \in \R^d}\int_{\{|y-x|\le s\}}|x-y|^2J(x,y)\,dy,\\
& L(s):=L_1(s)+ s^d
\big(s^{-2}L_2(s)\big)^{\frac{(d+\alpha_1)}{\alpha_1}}.
\end{split}
\end{equation*}
According to \cite[Theorem 2.1]{BKK}, there exists a constant
$r_0:=r_0(\kappa)>0$ such that for every $0<r<r_0$, $t>0$ and $x \in
\R^d$,
\begin{equation}\label{l3-1-2}
\Pp^x \left(\tau_{B(x,r)}<t\right)\le C_1 t L(r),
\end{equation}
where $C_1$ is a positive constant independent of $t$ and $r$.

Without lose of generality, we may and do assume that
$0<r_0<1$. Then, by (\ref{e3-1}) and (\ref{e3-2}), for every
$r \in (0,r_0)$
\begin{equation*}
\begin{split}
L(r)\le C_2\Big(r^{-\alpha_2-\frac{(\alpha_2-\alpha_1)d}{\alpha_1}}+
L_1(\kappa)\Big)\le C_3r^{-\alpha_2-\frac{(\alpha_2-\alpha_1)d}{\alpha_1}}.
\end{split}
\end{equation*}
 Let $c_0:=c_0(\kappa)$ be a positive constant such that $c_0C_1C_3\le
 {1}/{2}$. Then the required assertion \eqref{l3-1-1} follows from
 \eqref{l3-1-2} by taking $t=c_0r^{\alpha_2+\frac{(\alpha_2-\alpha_1)d}{\alpha_1}}.$
\end{proof}

\begin{lemma}\label{l3-2}
Let $r_0$, $c_0$ be the constants given in Lemma $\ref{l3-1}$, and
set $\varepsilon_0=r_0/\kappa$. Then, for any
$\varepsilon\in(0,\varepsilon_0)$, any two disjoint sets $B\supseteq
B(x,\varepsilon\kappa)$ and $D=B(y,\varepsilon\kappa)$ for some $x$,
$y \in \R^d$ satisfying that $dist(B,D)>{\varepsilon\kappa}$ and
$|z_1-z_2|\le \kappa$ for every $z_1 \in B$ and $z_2 \in D$, and
every $0< t_1<t_2<T(\kappa,\varepsilon):=c_0(\varepsilon
\kappa)^{\alpha_2+\frac{(\alpha_2-\alpha_1)d}{\alpha_1}},$ it holds that
\begin{equation*}\label{l3-2-1}
\Pp^x\Big(X_{\tau_B}\in D,t_1\le\tau_B<t_2\Big)\ge c_1\varepsilon^d\kappa^{-\alpha_1}(t_2-t_1),
\end{equation*}
where $c_1$ is a positive constant independent of
$\kappa$, $\varepsilon$, $x$ and $y$.
\end{lemma}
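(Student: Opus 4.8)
The plan is to derive the estimate from the L\'evy system (Ikeda--Watanabe formula) of the Hunt process $(X_t)_{t\ge0}$ together with the exit time bound of Lemma \ref{l3-1}. Since $dist(B,D)>\varepsilon\kappa>0$ the sets $B$ and $D$ have disjoint closures, and since $B(x,\varepsilon\kappa)\subseteq B$ the point $x$ lies in the interior of $B$. For such a non-local regular Dirichlet form the L\'evy system is $(J(x,y)\,dy,\,dt)$ (see e.g.\ \cite{BBCK,CKK1}); because a jump from $B$ into the disjoint set $D$ forces that jump time to coincide with $\tau_B$, a standard application of the formula gives
\begin{equation*}
\Pp^x\big(X_{\tau_B}\in D,\ t_1\le\tau_B<t_2\big)\ \ge\ \Ee^x\left[\int_{t_1}^{t_2}\I_{\{s<\tau_B\}}\left(\int_D J(X_s,y)\,dy\right)ds\right].
\end{equation*}

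Next I would bound the inner integral from below. For $s<\tau_B$ we have $X_s\in B$, hence for every $y\in D$ the hypotheses give $0<dist(B,D)\le|X_s-y|\le\kappa$, so \eqref{e3-1} applies; since $t\mapsto t^{-d-\alpha_1}$ is decreasing,
\begin{equation*}
\int_D J(X_s,y)\,dy\ \ge\ c_1\int_D|X_s-y|^{-d-\alpha_1}\,dy\ \ge\ c_1\kappa^{-d-\alpha_1}\,|D|\ =\ c_1\omega_d\,\varepsilon^d\kappa^{-\alpha_1},
\end{equation*}
where $\omega_d=|B(0,1)|$ and $|D|=|B(y,\varepsilon\kappa)|=\omega_d(\varepsilon\kappa)^d$. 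Substituting and applying Fubini,
\begin{equation*}
\Pp^x\big(X_{\tau_B}\in D,\ t_1\le\tau_B<t_2\big)\ \ge\ c_1\omega_d\,\varepsilon^d\kappa^{-\alpha_1}\int_{t_1}^{t_2}\Pp^x(\tau_B>s)\,ds.
\end{equation*}

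It then remains to bound $\Pp^x(\tau_B>s)$ from below for $s<t_2<T(\kappa,\varepsilon)$. Since $B\supseteq B(x,\varepsilon\kappa)$ we have $\tau_B\ge\tau_{B(x,\varepsilon\kappa)}$, and $\varepsilon<\varepsilon_0=r_0/\kappa$ forces $r:=\varepsilon\kappa\in(0,r_0)$; recalling $T(\kappa,\varepsilon)=c_0 r^{\alpha_2+(\alpha_2-\alpha_1)d/\alpha_1}$, Lemma \ref{l3-1} yields
\begin{equation*}
\Pp^x(\tau_B>s)\ \ge\ \Pp^x\big(\tau_{B(x,\varepsilon\kappa)}>T(\kappa,\varepsilon)\big)\ \ge\ \tfrac12,\qquad 0<s<T(\kappa,\varepsilon).
\end{equation*}
Hence $\int_{t_1}^{t_2}\Pp^x(\tau_B>s)\,ds\ge\frac12(t_2-t_1)$, and the claimed bound follows with constant $\frac12 c_1\omega_d$, which depends only on the constant in \eqref{e3-1} and on $d$, hence is independent of $\kappa,\varepsilon,x,y$. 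The routine parts are the volume computation and the monotonicity estimate; the step that deserves care is the invocation of the L\'evy system formula with a time window and on the event $\{X_{\tau_B}\in D\}$, where I would carefully check that $B,D$ have disjoint closures, that $x$ is interior to $B$, and that the endpoint conventions ($\{s<\tau_B\}$ versus $\{s\le\tau_B\}$, $[t_1,t_2)$ versus $(t_1,t_2]$) are immaterial after integrating in time.
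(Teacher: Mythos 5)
Your proof is correct and follows essentially the same route as the paper's: both reduce to the L\'evy system identity $\Pp^x(X_{\tau_B}\in D,\,t_1\le\tau_B<t_2)=\int_{t_1}^{t_2}\Pp^x(\tau_B>s)\bigl(\int_D J(\cdot,z)\,dz\bigr)\,ds$ (the paper expresses this through the killed density $p_B$ integrated over $B$, which is the same thing), bound the jump integral from below by $c_1\kappa^{-d-\alpha_1}|D|$ via \eqref{e3-1} and the hypothesis $|z_1-z_2|\le\kappa$, and then bound $\Pp^x(\tau_B>s)\ge\Pp^x(\tau_{B(x,\varepsilon\kappa)}>T(\kappa,\varepsilon))\ge\tfrac12$ by Lemma~\ref{l3-1}. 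The only cosmetic difference is that the paper invokes \cite[Proposition 2.5]{KS} for the time-windowed L\'evy-system identity rather than stating it directly.
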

\begin{proof}
Denote by $p_B(t,x,y)$ the density of the process $(X_t)_{t\ge0}$ killed on exiting the set $B$, i.e.\
$$p_B(t,x,y)=p(t,x,y)-\Ee^x(\tau_B\le t; p(t-\tau_B, X(\tau_B),y)).$$
 According to the framework of the L\'evy system for the Dirichlet form $(D,\D(D))$ (see e.g. \cite[Lemma 4.8]{CK1}), we have for disjoint open
sets $B$ and $D$ that
 \begin{equation*}
\begin{split}\Pp^x\Big(X_{\tau_B}\in D\Big)&=\Ee^x\Big(\int_{0}^{\tau_B}\int_{D}J(X_s,z)\,dz\,ds\Big)\\
&=\int_B\int_{0}^{\infty}p_B(s,x,y)\,ds\int_{D}J(y,z)\,dz\,dy,\quad x\in B.\end{split}
 \end{equation*}
Then, following the proof of \cite[Proposition 2.5]{KS}, we get that
\begin{equation*}\label{l3-2-2}
\Pp^x\Big(X_{\tau_B}\in D,t_1\le\tau_B<t_2\Big)=\int_B\int_{t_1}^{t_2}p_B(s,x,y)\,ds\int_{D}J(y,z)\,dz\,dy.
\end{equation*}

Therefore, it holds for every
$\varepsilon \in (0,\varepsilon_0)$,
\begin{equation*}
\begin{split}
\Pp^x\Big(X_{\tau_B}\in D,t_1\le\tau_B<t_2\Big)&=\int_B\int_{t_1}^{t_2}p_B(s,x,y)\,ds\int_{D}J(y,z)\,dz\,dy\\
&\ge
\int_B\int_{t_1}^{t_2}p_B(s,x,y)\,ds\int_{D}\frac{c_1}{|z-y|^{d+\alpha_1}}\,dz\,dy\\
&\ge { C}\kappa^{-d-\alpha_1}|D|\int_B\int_{t_1}^{t_2}p_B(s,x,y)\,ds\,dy\\
&=
C\varepsilon^d\kappa^{-\alpha_1}\int_{t_1}^{t_2} \Pp^x\big(\tau_B>s\big)\,ds\\
&\ge C\varepsilon^d\kappa^{-\alpha_1}(t_2-t_1)\Pp^x\big(\tau_B>T(\kappa,\varepsilon)\big)\\
&\ge C\varepsilon^d\kappa^{-\alpha_1}(t_2-t_1)\Pp^x\big(\tau_{B(x,\varepsilon\kappa)}>T(\kappa,\varepsilon)\big)\\
&\ge {C\varepsilon^d}{\kappa^{-\alpha_1}}(t_2-t_1),
\end{split}
\end{equation*}
where in the first inequality we have used \eqref{e3-1}, the second inequality is
due to $|y-z|\le \kappa$ for every $y \in B$ and $z \in D$, and the last inequality follows from \eqref{l3-1-1} with $r={\varepsilon\kappa}$.
\end{proof}

\begin{lemma}\label{l3-3} Let $\varepsilon_0$, $c_0$ be the two constants
given in Lemma $\ref{l3-2}$. For any $\varepsilon\in
(0,\min({1}/{11},\varepsilon_0))$, let $D=B(0,2\varepsilon\kappa)$,
and $t_0=T(\kappa,\varepsilon):=c_0(\varepsilon
\kappa)^{\alpha_2+\frac{(\alpha_2-\alpha_1)d}{\alpha_1}}.$ Then, there
is a constant $c_2(\kappa,\varepsilon)>0$ such that for all
$x\in\R^d$ with $|x|> \frac{\kappa
(1-5\varepsilon)(1-4\varepsilon)}{\varepsilon}$,
\begin{equation}\label{l3-3-1}
T_{t_0}^V(\I_D)(x)\ge
\exp\Big(-\frac{1}{(1-6\varepsilon)\kappa}|x|\log\big(1+|x|+\sup_{|z|\le
|x|+2\varepsilon\kappa}V(z)\big)-c_{2}(\kappa,\varepsilon)\Big).
\end{equation}
\end{lemma}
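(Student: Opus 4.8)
The plan is to bound $T^V_{t_0}(\I_D)(x)$ from below by constructing a "chain" of balls from $x$ to $D$ and forcing the process to travel along it within time $t_0$ while keeping the Feynman–Kac exponential factor under control. Since $V$ is only locally bounded (not necessarily tending to infinity), the key quantity one pays for the exponential weight on a neighbourhood of the segment from $x$ to the origin is $\sup_{|z|\le |x|+2\varepsilon\kappa}V(z)$, which is exactly what appears in the right side of \eqref{l3-3-1}.

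\textbf{Step 1: Set up the chain.} Given $x$ with $|x|$ large, I would pick $N\approx |x|/((1-6\varepsilon)\kappa)$ points $x_0=x, x_1,\dots,x_N$ lying (roughly) on the segment joining $x$ to $0$, with $|x_{j}-x_{j+1}|$ comparable to $\varepsilon\kappa$ in such a way that consecutive balls $B_j:=B(x_j,\varepsilon\kappa)$ satisfy the hypotheses of Lemma \ref{l3-2}: namely $B_j$ and $B_{j+1}$ are disjoint, $\mathrm{dist}(B_j,B_{j+1})>\varepsilon\kappa$, and every pair of points, one in $B_j$ and one in $B_{j+1}$, is at distance $\le\kappa$. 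I would arrange the last ball $B_N$ to sit inside $D=B(0,2\varepsilon\kappa)$. The precise bookkeeping (how $N$ depends on $|x|$, that $N\le |x|/((1-6\varepsilon)\kappa)+1$, and that all $B_j$ lie inside $B(0,|x|+2\varepsilon\kappa)$ so that $V\le \sup_{|z|\le|x|+2\varepsilon\kappa}V(z)=:M$ on each $B_j$) is the routine-but-careful part; the constraint $|x|>\kappa(1-5\varepsilon)(1-4\varepsilon)/\varepsilon$ is what guarantees the chain fits.

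\textbf{Step 2: Iterate Lemma \ref{l3-2} along the chain.} Using the strong Markov property and the event that the process exits $B_0$ into $B_1$ during a short time window, then exits $B_1$ into $B_2$, etc., and putting the Feynman–Kac factor $\exp(-\int_0^{t_0}V(X_s)\,ds)\ge \exp(-M t_0)$ (valid on the event that $X_s$ stays in $\bigcup_j B_j\subseteq B(0,|x|+2\varepsilon\kappa)$ up to time $t_0$), I would get a lower bound roughly of the form
\begin{equation*}
T^V_{t_0}(\I_D)(x)\ge e^{-Mt_0}\,\Pp^x\Big(X \text{ traverses } B_0\to B_1\to\cdots\to B_N\subseteq D \text{ by time } t_0\Big).
\end{equation*}
Splitting the time interval $[0,t_0]$ into $N$ equal pieces of length $t_0/N$ and applying Lemma \ref{l3-2} to each transition $B_j\to B_{j+1}$ with $t_2-t_1=t_0/N$ (which is $<T(\kappa,\varepsilon)=t_0$, so the lemma applies), each step contributes a factor $\ge c_1\varepsilon^d\kappa^{-\alpha_1}(t_0/N)$. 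Multiplying $N$ such factors gives
\begin{equation*}
T^V_{t_0}(\I_D)(x)\ge e^{-Mt_0}\big(c_1\varepsilon^d\kappa^{-\alpha_1}t_0/N\big)^{N}.
\end{equation*}

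\textbf{Step 3: Convert the product into the claimed exponential.} Taking logarithms, $\log\big((c_1\varepsilon^d\kappa^{-\alpha_1}t_0/N)^N\big)=N\log(c_1\varepsilon^d\kappa^{-\alpha_1}t_0)-N\log N$. Since $N\asymp |x|/((1-6\varepsilon)\kappa)$ and $N\log N \lesssim \tfrac{|x|}{(1-6\varepsilon)\kappa}\log|x|$ plus lower-order terms, and since $-Mt_0\ge -\tfrac{|x|}{(1-6\varepsilon)\kappa}\log M$ absorbed into the logarithm (here one uses $N\le |x|/((1-6\varepsilon)\kappa)$ and $t_0$ constant), one collects everything into
\begin{equation*}
\log T^V_{t_0}(\I_D)(x)\ge -\frac{|x|}{(1-6\varepsilon)\kappa}\log\big(1+|x|+M\big)-c_2(\kappa,\varepsilon),
\end{equation*}
which is \eqref{l3-3-1}. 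The slack between $(1-6\varepsilon)$ and, say, a naive $(1-5\varepsilon)$ is precisely what is consumed by the $-N\log N$ and constant terms, which is why the statement is phrased with the $(1-6\varepsilon)$ denominator and a free additive constant $c_2(\kappa,\varepsilon)$.

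\textbf{Main obstacle.} The genuinely delicate point is the geometric construction in Step 1 together with the bookkeeping of constants in Step 3: one must choose the chain length $N$ and the inter-ball spacing so that (a) Lemma \ref{l3-2}'s three geometric hypotheses hold uniformly along the chain, (b) the chain terminates inside $D$, (c) the number of balls is at most $|x|/((1-6\varepsilon)\kappa)+O(1)$ (to get the sharp constant in the exponent), and (d) all balls remain inside $B(0,|x|+2\varepsilon\kappa)$ so the bound $V\le M$ on the relevant region is legitimate. Everything else — the strong Markov iteration, the time-splitting, and the logarithmic estimate — is then mechanical.
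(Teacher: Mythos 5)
You bound the Feynman--Kac weight by the crude estimate $\exp\big(-\int_0^{t_0}V(X_s)\,ds\big)\ge e^{-Mt_0}$ on the event that the process stays inside $B(0,|x|+2\varepsilon\kappa)$ for the full time $t_0$, where $M:=\sup_{|z|\le|x|+2\varepsilon\kappa}V(z)$. This is fatal: $V$ is only assumed locally bounded, so $M$ may grow arbitrarily fast in $|x|$ (the paper explicitly applies this lemma to potentials like $V(x)\asymp e^{c|x|^\theta}$ in Proposition \ref{pro}(2)). Your Step 3 then claims $-Mt_0 \ge -\tfrac{|x|}{(1-6\varepsilon)\kappa}\log M$, i.e.\ $M/\log M \le |x|/(t_0(1-6\varepsilon)\kappa)$, which is simply false as soon as $M$ is superlinear in $|x|$ --- and then your lower bound is exponentially worse than the target $\exp\big(-\tfrac{|x|}{(1-6\varepsilon)\kappa}\log(1+|x|+M)\big)$. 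In short, paying the flat factor $e^{-Mt_0}$ gives an \emph{exponential-in-$M$} loss, while the conclusion only allows a loss of order $M^{-|x|/\kappa}$, which is far larger when $M\gg |x|$.

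The paper avoids this by never running the process for the whole time $t_0$ under the worst-case value of $V$. In each ball $D_{i-1}$ it decomposes the exit time window $(0,t_0/n)$ dyadically into pieces $[\tfrac{t_0}{(j+1)n},\tfrac{t_0}{jn})$, $j\ge1$: on the $j$-th piece the Feynman--Kac cost is only $\exp\big(-\tfrac{t_0}{jn}\sup_{D_{i-1}}V\big)$, and Lemma \ref{l3-2} gives the probability of that transition a lower bound $\asymp \tfrac{\varepsilon^d\kappa^{-\alpha_1}t_0}{j(j+1)n}$. Summing over $j$ and invoking the elementary inequality $\sum_{j\ge1}\tfrac{e^{-r/j}}{j(j+1)}\ge\tfrac{e^{-1}}{r+1}$ (see \cite{KS}, Lemma 5.2, quoted as \eqref{rrr1}) converts the would-be exponential factor $e^{-r}$ into the \emph{reciprocal} factor $\tfrac{1}{r+1}$, yielding per-step contributions of order $\tfrac{c}{\,n+t_0\sup_{D_{i-1}}V\,}$ and hence an overall lower bound of the form $\big(\tfrac{c}{n+t_0 M}\big)^n \asymp \exp\big(-n\log(n+t_0 M)\big)$, which does give \eqref{l3-3-1}. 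This dyadic-in-time averaging is the key missing idea; without it (or some other mechanism for avoiding the flat $e^{-Mt_0}$ penalty), your argument does not close. Your Step~1 (the chain of balls with the geometric bookkeeping ensuring Lemma \ref{l3-2} applies and $V\le M$ on the chain) is correct in spirit and matches the paper, but the paper uses balls $D_i=B(x_i,2\varepsilon\kappa)$ and $\tilde D_i=B(x_i,\varepsilon\kappa)$ of two radii (so that exits from $D_{i-1}$ into the \emph{inner} ball $\tilde D_{i-2}$ give enough room to survive the next step), a refinement you did not spell out but would also need.
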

\begin{proof}
For any $x\in\R^d$ with $|x|>\frac{\kappa (1-5\varepsilon)(1-4\varepsilon)}{\varepsilon}$, let $$n =\bigg\lfloor\frac{1}{(1-4\varepsilon)\kappa}|x|\bigg\rfloor+1$$ and
$x_i={ix}/{n}$ for any $0\le i\le n$, where $\lfloor x\rfloor$ denotes the largest integer less than or equal to $x$. In particular, $x_0=0$, $x_n=x$ and
 $$\frac{1}{(1-4\varepsilon)\kappa}|x|\le n < \frac{1}{(1-5\varepsilon)\kappa}|x|.$$Next, for all $0 \le i \le n$,
 set $D_i:=B(x_i,2\varepsilon{\kappa})$, $\tilde D_i:=B(x_i,\varepsilon\kappa)$. We can check that for all $0\le i\le n-1$, $dist(D_i,D_{i+1})>(1-5\varepsilon)\kappa-4\varepsilon\kappa\ge2\varepsilon{\kappa}$, and
$|z_i-z_{i+1}|\le (1-4\varepsilon)\kappa+4\varepsilon\kappa=\kappa$ for every $z_i \in D_i$ and $z_{i+1}\in
D_{i+1}$.

In the following, we define for all $n\ge 1$,
\begin{equation*}
\begin{split}
\tilde{\tau}_{D_i}:&=\inf\{t\ge \tilde{\tau}_{D_{i+1}}: X_t\notin D_i\},\quad 1\le i\le n-1; \\
\tilde{\tau}_{D_n}:&={\tau}_{D_n}.
\end{split}
\end{equation*}
By the convention, we also set $\tilde{\tau}_{D_{n+1}}=0$. Then,
\begin{equation}\label{l3-3-2}
\begin{split}
&T_{t_0}^V(\I_D)(x)\\
&=\Ee^x\Big(\I_D(X_{t_0})\exp\Big(-\int_0^{t_0} V(X_s)\,ds\Big)\Big)\\
& \ge \Ee^x\Big(0<\tilde{\tau}_{D_{i}}-\tilde{\tau}_{D_{i+1}}<\frac{t_0}{n},
X_{\tilde{\tau}_{D_i}}\in \tilde D_{i-1}\
{\rm for\ each}\ 1\le i \le n,\forall_{s\in[\tilde{\tau}_{D_1},t_0] } X_{s} \in D; \\
&\qquad\quad
\exp\Big(-\sum_{i=1}^n\int^{\tilde{\tau}_{D_{i}}}_{\tilde{\tau}_{D_{i+1}}}
V(X_s)\,ds-\int_{\tilde{\tau}_{D_1}}^{t_0}
 V(X_s)\,ds\Big)\Big)\\
& = \Ee^x\Big(0<{\tau}_{D_{n}}<\frac{t_0}{n}, X_{{\tau}_{D_{n}}}\in
\tilde D_{n-1}; \exp\Big(-\int^{{\tau}_{D_{n}}}_{0} V(X_s)\,ds\Big)\\
&\qquad\quad\cdot\Ee^{X_{\tilde{\tau}_{D_{n}}}}\Big(0<\tau_{D_{n-1}}<\frac{t_0}{n},X_{{\tau}_{D_{n-1}}}\in
\tilde D_{n-2}; \exp\Big(-\int^{{\tau}_{D_{n-1}}}_{0} V(X_s)\,ds\Big)\\
&\qquad\quad\,\cdot \Ee^{X_{\tilde{\tau}_{D_{n-1}}}}\Big(\cdots
\Ee^{X_{\tilde{\tau}_{D_{2}}}}\Big(0<{\tau}_{D_{1}}<\frac{t_0}{n},X_{\tau_{D_{1}}}\in
\tilde D_0; \exp\Big(-\int^{{\tau}_{D_{1}}}_{0} V(X_s)\,ds\Big)\\
&\qquad\quad\,\,\cdot
\Ee^{X_{\tilde{\tau}_{D_{1}}}}\Big(\forall_{s\in[0,t_0-\tilde {\tau}_{D_1}]} X_{s}
\in D;
\exp\Big(-\int_{0}^{t_0-\tilde {\tau}_{D_{1}}}V(X_s)\,ds\Big)\Big)\Big)\cdots\Big)\Big)\Big),
\end{split}
\end{equation}
where in the last equality we have used the strong Markov property.

On the one hand, according to Lemma \ref{l3-2}, for any
$2\le i\le n+1$, if $X_{\tilde{\tau}_{D_{i}}} \in \tilde D_{i-1}$, then for every $i>1$,
 \begin{align*}
&
\Ee^{X_{\tilde{\tau}_{D_{i}}}}\Big(0<{\tau}_{D_{i-1}}<\frac{t_0}{n},X_{{\tau}_{D_{i-1}}}\in
\tilde D_{i-2}; \exp\Big(-\int^{{\tau}_{D_{i-1}}}_{0} V(X_s)\,ds\Big)\Big)\\
&\ge
\sum_{j=1}^{\infty}\Ee^{X_{{\tau}_{D_{i}}}}\Big(\frac{t_0}{(j+1)n}\le
{\tau}_{D_{i-1}}<\frac{t_0}{jn},X_{\tau_{D_{i-1}}}\in
\tilde D_{i-2}; \exp\Big(-\int^{{\tau}_{D_{i-1}}}_{0} V(X_s)\,ds\Big)\Big)\\
& \ge \sum_{j=1}^{\infty}\exp\Big(-\frac{t_0}{jn}\sup_{x \in
D_{i-1}}V(x)\Big)\\
&\qquad\qquad \times\inf_{y \in \tilde D_{i-1}}
\Ee^y\Big(\frac{t_0}{(j+1)n}\le{\tau}_{D_{i-1}}<\frac{t_0}{jn},
X_{{\tau}_{D_{i-1}}}\in
\tilde D_{i-2}\Big)\\
&\ge
\frac{C\varepsilon^d\kappa^{-\alpha_1} t_0}{n}\sum_{j=1}^{\infty}\frac{1}{j(j+1)}\exp\Big(-\frac{t_0}{jn}\sup_{x
\in D_{i-1}}V(x)\Big)\\
& \ge \frac{C\varepsilon^d\kappa^{-\alpha_1}t_0 }{n+t_0\sup_{x \in D_{i-1}}V(x)},
 \end{align*}
where in the third inequality we have used Lemma \ref{l3-2} with
$B=D_{i-1}$ and $D=\tilde D_{i-2}$, and the last inequality follows
from \cite[Lemma 5.2]{KS}, i.e.\
\begin{equation}\label{rrr1}\sum_{j=1}^\infty \frac{e^{-r/j}}{j(j+1)}\ge \frac{e^{-1}}{r+1},\quad r\ge 0.\end{equation}

On the other hand, due to Lemma \ref{l3-1}, if $X_{\tilde{\tau}_{D_1}}\in
\tilde D_0$, then
\begin{equation}\label{l3-3-3}
\begin{split}
&\Ee^{X_{\tilde{\tau}_{D_{1}}}}\Big(\forall_{s\in  [0,t_0-\tilde {\tau}_{D_1}]} X_{s}
\in D;
\exp\Big(-\int_{0}^{t_0-\tilde {\tau}_{D_{1}}}V(X_s)\,ds\Big)\Big)\\
&\ge \exp\Big(-t_0\sup_{z\in D}V(z)\Big)\inf_{y \in \tilde
D_0}\Ee^y\Big (\tau_D>t_0\Big)\\
&\ge \exp\Big(-t_0\sup_{z\in D}V(z)\Big)\inf_{y \in \tilde
D_0}\Ee^y\Big
(\tau_{B(y,\varepsilon\kappa)}>T(\kappa,\varepsilon)\Big)\\
& \ge {C}(\kappa,\varepsilon).
\end{split}
\end{equation}

Combining all the estimates above with the fact that $n \le\frac{1}{(1-5\varepsilon)\kappa}|x|$, we obtain that
\begin{equation*}
\begin{split}
T_{t_0}^V(\I_D)(x)&  \ge C(\kappa,\varepsilon) \prod_{i=1}^n\Big(\frac{c\varepsilon^d\kappa^{-\alpha_1} t_0}{n+t_0\sup_{z
\in D_{i-1}}V(z)}\Big)\\
&\ge C(\kappa,\varepsilon)\Big(\frac{c\varepsilon^d\kappa^{-\alpha_1} t_0}{n+t_0\sup_{|z|\le |x|+2\varepsilon\kappa}V(z)}\Big)^n\\
&\ge \exp\Big(-\frac{1}{(1-6\varepsilon)\kappa}|x|\log(1+|x|+\sup_{|z|\le |x|+2\varepsilon\kappa}V(z))-C(\kappa,\varepsilon)\Big),
\end{split}
\end{equation*} which completes the proof.
\end{proof}
According to Lemma \ref{l3-3}, we can obtain the following lower bound estimate for the ground state.
\begin{proposition}\label{p3-1} For any $\varepsilon\in (0,\min({1}/{11},\varepsilon_0))$ and  $x\in \R^d$, it holds
\begin{equation}\label{bbb}
\phi_1(x) \ge \exp\Big(-\frac{1}{\kappa(1-6\varepsilon)}|x|\log(1+|x|+\sup_{|z|\le |x|+2\varepsilon\kappa}V(z))-c_3(\kappa,\varepsilon)\Big)
\end{equation}
for some positive constant $c_3(\kappa,\varepsilon)$ independent of $x$.
\end{proposition}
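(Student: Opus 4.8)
The plan is to derive \eqref{bbb} from the lower bound for $T_{t_0}^V(\I_D)(x)$ in Lemma \ref{l3-3} by expressing $\phi_1$ via the semigroup and bounding it from below pointwise. The key observation is the standard eigenfunction identity $T_{t}^V\phi_1 = e^{-\lambda_1 t}\phi_1$, which we will use with $t=t_0$ (the time appearing in Lemma \ref{l3-3}, with a fixed choice of $\varepsilon$): this gives
\begin{equation*}
\phi_1(x) = e^{\lambda_1 t_0} T_{t_0}^V\phi_1(x) = e^{\lambda_1 t_0}\int_{\R^d} p^V(t_0,x,y)\phi_1(y)\,dy \ge e^{\lambda_1 t_0}\int_{D} p^V(t_0,x,y)\phi_1(y)\,dy,
\end{equation*}
where $D=B(0,2\varepsilon\kappa)$ as in Lemma \ref{l3-3}. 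Since $\phi_1$ is continuous and strictly positive by our standing assumption, it is bounded below on the compact set $\overline D$ by a positive constant $c(\kappa,\varepsilon)$; hence the right-hand side is at least $c(\kappa,\varepsilon)e^{\lambda_1 t_0}\int_D p^V(t_0,x,y)\,dy = c(\kappa,\varepsilon)e^{\lambda_1 t_0}T_{t_0}^V(\I_D)(x)$. Applying Lemma \ref{l3-3} then yields \eqref{bbb} for all $x$ with $|x|>\frac{\kappa(1-5\varepsilon)(1-4\varepsilon)}{\varepsilon}$, after absorbing $c(\kappa,\varepsilon)$, $e^{\lambda_1 t_0}$ and the constant $c_2(\kappa,\varepsilon)$ into a single constant $c_3(\kappa,\varepsilon)$.

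The remaining point is to cover the bounded region $|x|\le \frac{\kappa(1-5\varepsilon)(1-4\varepsilon)}{\varepsilon}$, where Lemma \ref{l3-3} does not apply directly. Here I would argue that on any fixed compact set the left-hand side of \eqref{bbb} is bounded below by a positive constant (again by continuity and strict positivity of $\phi_1$), while the right-hand side is bounded above by a constant depending only on $\kappa,\varepsilon$ (the potential $V$ is locally bounded, so $\sup_{|z|\le |x|+2\varepsilon\kappa}V(z)$ is finite and controlled on the compact region). Thus, after possibly enlarging $c_3(\kappa,\varepsilon)$, the estimate \eqref{bbb} extends to all $x\in\R^d$. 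Combining the two regimes gives the claim with a single constant $c_3(\kappa,\varepsilon)$ independent of $x$.

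I do not expect a serious obstacle here: the proof is essentially a one-line semigroup computation plus the already-established Lemma \ref{l3-3}. The only mild technical care needed is (a) justifying the eigenfunction identity $T_{t_0}^V\phi_1=e^{-\lambda_1 t_0}\phi_1$ pointwise (rather than just in $L^2$), which follows from the continuity of $p^V$ and of $\phi_1$ together with $p^V(t_0,x,\cdot)\phi_1\in L^1$; and (b) bookkeeping the constants so that the final $c_3(\kappa,\varepsilon)$ genuinely does not depend on $x$, which is immediate once one fixes $\varepsilon$ and notes that $t_0=T(\kappa,\varepsilon)$ and $\lambda_1$ are fixed. If one prefers to avoid invoking the pointwise eigenfunction identity, an alternative is to use $\phi_1(x)\ge e^{-\lambda_1(t-t_0)}T_{t-t_0}^V\bigl(e^{\lambda_1 t_0}T_{t_0}^V\phi_1\bigr)(x)$ for large $t$ together with the positivity of the kernel, but the direct route above is cleanest.
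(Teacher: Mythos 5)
Your proof is correct and is essentially the paper's argument: you both combine Lemma \ref{l3-3} with the eigenfunction identity $T_{t_0}^V\phi_1=e^{-\lambda_1 t_0}\phi_1$ and the pointwise bound $\I_D\le(\inf_{y\in D}\phi_1(y))^{-1}\phi_1$ (you apply $T_{t_0}^V$ to this inequality in the other direction, but that is the same step), then handle the compact region $|x|\le \kappa(1-5\varepsilon)(1-4\varepsilon)/\varepsilon$ by continuity and strict positivity of $\phi_1$ after enlarging $c_3$.
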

\begin{proof}  Since $\phi_1$ is continuous and strictly positive, we only need to verify the desired assertion for $x\in\R^d$ with $|x|>\frac{\kappa (1-5\varepsilon)(1-4\varepsilon)}{\varepsilon}$.
According to (\ref{l3-3-1}), we have for any $x\in\R^d$ with $|x|>\frac{\kappa (1-5\varepsilon)(1-4\varepsilon)}{\varepsilon}$ that
\begin{equation*}
\begin{split}
\exp\Big(-\frac{1}{\kappa(1-6\varepsilon)}|x|\log
\big(1+|x|&+\sup_{|z|\le |x|+2\varepsilon\kappa}V(z)\big)-C(\kappa,\varepsilon)\Big)\\
&\le T_{t_0}^V(\I_D)(x)\le  cT_{t_0}^V(\phi_1)(x)=ce^{-\lambda_1 t_0}\phi_1(x),
\end{split}
\end{equation*}
where $c:=(\inf_{y \in D}\phi_1(y))^{-1}<\infty$. This immediately yields the desired assertion.
\end{proof}
\subsection{Intrinsic local super Poincar\'e inequality}
In this part, we will present the following local intrinsic super
Poincar\'e inequality.
\begin{proposition}\label{l3-4}
Let $\varphi$ be a positive and continuous function
 on $\R^d$. For any
$r\ge\kappa$, $s>0$ and $f\in C_c^2(\R^d)$,
\begin{equation*}
\begin{split}
 \int_{B(0,r)} f^2(x) \,dx
\le & s D^V(f,f)\\
&+\frac{c(\kappa)}{\inf_{|x|\le r+\kappa} \varphi^2(x)}
 \big(1+s^{-\frac{d}{\alpha_1}}\big)\Big(\int_{B(0,r+\kappa)}|f(x)|\varphi(x)\,dx\Big)^2.
\end{split}
\end{equation*}
\end{proposition}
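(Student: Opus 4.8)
The plan is to reduce the weighted inequality to a standard (unweighted) local super Poincar\'e inequality for the non-local Dirichlet form $(D,\D(D))$ on the ball $B(0,r)$, and then to bound the weight from below by $\inf_{|x|\le r+\kappa}\varphi^2(x)$. First I would recall that $(D,\D(D))$, by \eqref{e3-1} on the diagonal, dominates (up to a constant) the Dirichlet form of the truncated symmetric $\alpha_1$-stable-like form with jumps of size at most $\kappa$; more precisely, for $f\in C_c^2(\R^d)$,
\begin{equation*}
D(f,f)\ge c_1\iint_{\{|x-y|\le\kappa\}}\big(f(x)-f(y)\big)^2|x-y|^{-d-\alpha_1}\,dx\,dy =: c_1 D^{(\kappa)}_{\alpha_1}(f,f),
\end{equation*}
and of course $D^V(f,f)\ge D(f,f)\ge c_1 D^{(\kappa)}_{\alpha_1}(f,f)$. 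So it suffices to prove the stated inequality with $D^V$ replaced by $D^{(\kappa)}_{\alpha_1}$ and with the constant adjusted. The point of the truncated range $|x-y|\le\kappa$ is exactly that when we test with functions supported (morally) in $B(0,r)$, all the jump mass we use stays inside $B(0,r+\kappa)$, which is why the $L^1$-type remainder term only sees $\varphi$ on $B(0,r+\kappa)$.

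Next I would establish the unweighted local super Poincar\'e inequality
\begin{equation*}
\int_{B(0,r)} f^2(x)\,dx\le s\, D^{(\kappa)}_{\alpha_1}(f,f)+c(\kappa)\big(1+s^{-d/\alpha_1}\big)\Big(\int_{B(0,r+\kappa)}|f(x)|\,dx\Big)^2,\quad s>0.
\end{equation*}
This is the technical core. The natural route is a covering/rescaling argument: cover $B(0,r)$ by a bounded-overlap family of balls of radius $\rho\le\kappa/2$ (the number of such balls is irrelevant since the right-hand side carries an $L^1$ norm, which is additive and scale-favourable), apply on each small ball the classical super Poincar\'e (or just Nash/Faber–Krahn) inequality for the truncated $\alpha_1$-stable form — which on a ball of radius $\rho$ reads, after scaling $x\mapsto\rho x$, $\int_{B_\rho} f^2\le s\,D_{\alpha_1}^{(\kappa)}(f,f)+c(1+(\rho^{\alpha_1}/s)^{d/\alpha_1})\rho^{-d}(\int_{B_\rho}|f|)^2$ — and then sum. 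Tuning $\rho$ as a function of $s$ (roughly $\rho^{\alpha_1}\asymp s$ when $s\le\kappa^{\alpha_1}$, and $\rho\asymp\kappa$ otherwise) produces the factor $1+s^{-d/\alpha_1}$ and absorbs all ball-counting constants into $c(\kappa)$; the jumps used on each small ball of radius $\rho\le\kappa$ reach at most distance $\kappa$, hence stay within $B(0,r+\kappa)$, giving the claimed support of the $L^1$ term. Alternatively, one can invoke a known super Poincar\'e inequality for truncated stable-like forms on bounded domains from the literature (e.g.\ via the Faber–Krahn inequality together with \cite[Theorem 3.3.15]{WBook}-type equivalences) and read off the rate function $1+s^{-d/\alpha_1}$ directly; either way the output is the displayed unweighted inequality.

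Finally I would put in the weight. Given $f\in C_c^2(\R^d)$, apply the unweighted inequality and estimate trivially
\begin{equation*}
\Big(\int_{B(0,r+\kappa)}|f(x)|\,dx\Big)^2=\Big(\int_{B(0,r+\kappa)}\frac{|f(x)|\varphi(x)}{\varphi(x)}\,dx\Big)^2\le \frac{1}{\inf_{|x|\le r+\kappa}\varphi^2(x)}\Big(\int_{B(0,r+\kappa)}|f(x)|\varphi(x)\,dx\Big)^2,
\end{equation*}
using positivity and continuity of $\varphi$ so that the infimum over the compact set $\overline{B(0,r+\kappa)}$ is strictly positive. Combining with $D^V(f,f)\ge c_1 D^{(\kappa)}_{\alpha_1}(f,f)$ (replace $s$ by $c_1 s$, which changes $1+s^{-d/\alpha_1}$ only by a $\kappa$-dependent constant) yields the assertion for all $f\in C_c^2(\R^d)$ and all $r\ge\kappa$, $s>0$. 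The main obstacle is the unweighted local super Poincar\'e inequality with the explicit polynomial rate $1+s^{-d/\alpha_1}$ and, in particular, verifying that the $L^1$ remainder term picks up $f$ only on $B(0,r+\kappa)$ uniformly in $r$; the covering argument with balls of radius $\le\kappa$ is what makes this work and is the step I would write out most carefully.
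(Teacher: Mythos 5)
Your overall architecture matches the paper exactly at the two bookending steps: you reduce $D^V$ to the truncated form $D_{\alpha_1,\kappa}(f,f)=c_1\iint_{\{|x-y|\le\kappa\}}(f(x)-f(y))^2|x-y|^{-d-\alpha_1}\,dx\,dy$ using \eqref{e3-1} and $V\ge 0$, and you recover the weighted form at the end by writing $|f|=(|f|\varphi)/\varphi$ and pulling out $\inf_{|x|\le r+\kappa}\varphi^2$, observing that truncation at range $\kappa$ is precisely what confines the $L^1$ remainder to $B(0,r+\kappa)$. Where you diverge is the technical core, namely the unweighted local super Poincar\'e inequality with rate $1+s^{-d/\alpha_1}$ and remainder on $B(0,r+\kappa)$. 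You propose a covering of $B(0,r)$ by balls of radius $\rho\lesssim\kappa$, applying a scaled super Poincar\'e/Nash inequality on each patch and summing, with $\rho$ tuned to $s$. This can be made to work (bounded overlap handles the double counting of the Dirichlet form, and $\sum_i(\int_{B_i}|f|)^2\le(\sum_i\int_{B_i}|f|)^2$ handles the $L^1$ term), but it introduces unnecessary bookkeeping. The paper's proof is more direct and avoids any covering: set $f_s(x)=|B(0,s)|^{-1}\int_{B(x,s)}f(z)\,dz$ for $s\le\kappa$, split $f^2\le 2(f-f_s)^2+2f_s^2$ on $B(0,r)$, bound $\int_{B(0,r)}(f-f_s)^2$ by Jensen and the definition of $D_{\alpha_1,\kappa}$ (the averaging radius $s\le\kappa$ keeps all jumps within the truncation range), and bound $\int_{B(0,r)}f_s^2$ by $|B(0,s)|^{-1}(\int_{B(0,r+s)}|f|)^2$ using the trivial $\sup$-times-$L^1$ estimate on the mollification. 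Optimizing the averaging radius then yields the rate $1+s^{-d/\alpha_1}$ in one step. If you unfold your covering argument carefully you will find it essentially rediscovers this ball-average computation at scale $\rho$ on each patch, so the mollification route is the leaner version of the same idea; note also that your fall-back of invoking a pre-packaged Faber--Krahn plus Wang's equivalence theorem is circular in spirit (that equivalence is usually proved by the very estimate you need), whereas the direct mollification argument is self-contained and is what the paper carries out following the argument of Chen--Kumagai.
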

\begin{proof}
(i) According to \eqref{e3-1} and the fact that $V \ge 0$, for any $f\in C_c^2(\R^d)$,
\begin{equation}\label{pro1-00}
\begin{split}
D_{\alpha_1,\kappa}(f,f)&:=c_1\iint_{\{|x-y|\le \kappa\}}\big(
f(x)-f(y)\big)^2 |x-y|^{-d-\alpha_1}\,dx\,dy\\
&\le D^V(f,f).
\end{split}
\end{equation}
Next, we follow the argument of \cite[Theorem 3.1]{CK1} to obtain that for any $s>0$, $r\ge \kappa$ and $f\in C_c^2(\R^d)$,
\begin{equation}\label{pro-1-01}
\begin{split}
 \int_{B(0,r)} f^2(x) \,dx  \le&  s D_{\alpha_1,\kappa}(f,f)+C(\kappa)
 \big(1+s^{-\frac{d}{\alpha_1}}\big)\Big(\int_{B(0,r+\kappa)}|f(x)|\,dx\Big)^2
\end{split}
\end{equation}
holds with some constant $C(\kappa)>0$. If \eqref{pro-1-01} holds,
then, combining it with (\ref{pro1-00}) above will complete the proof.

(ii) Next, we turn to the proof of \eqref{pro-1-01}. For any $0<s\le r$
and $f\in C_c^2(\R^d)$, define
$$f_s(x):=\frac{1}{|B(0,s)|}\int_{B(x,s)}f(z)\,dz,\quad x\in B(0,r).$$ We have
$$\sup_{x\in B(0,r)}|f_s(x)|\le \frac{1}{|B(0,s)|} \int_{B(0,r+s)}|f(z)|\,dz,$$
and $$\aligned \int_{B(0,r)}|f_s(x)|\,dx&\le \int_{B(0,r)}\frac{1}{|B(0,s)|}\int_{B(x,s)}|f(z)|\,dz\,dx\\
&\le
\int_{B(0,r+s)}\bigg(\frac{1}{|B(0,s)|}\int_{B(z,s)}\,dx\bigg)|f(z)|\,dz\\
&\le
\int_{B(0,r+s)}|f(z)|\,dz.
\endaligned$$ Thus,
$$\aligned\int_{B(0,r)}f_s^2(x)\,dx\le & \Big(\sup_{x\in B(0,r)}|f_s(x)|\Big) \int_{B(0,r)}|f_s(x)|\,dx\\
\le &\frac{1}{|B(0,s)|}
\bigg(\int_{B(0,r+s)}|f(z)|\,dz\bigg)^2.\endaligned$$

Therefore, for any $f\in C_c^2(\R^d)$ and $0<s\le r,$
$$\aligned\int_{B(0,r)}f^2(x)\,dx
\le & 2\int_{B(0,r)}\big(f(x)-f_s(x)\big)^2\,dx+ 2\int_{B(0,r)}f^2_s(x)\,dx\\
\le &2\int_{B(0,r)}\frac{1}{|B(0,s)|}\int_{B(x,s)}(f(x)-f(y))^2\,dx\,dy\\
&+ \frac{2}{|B(0,s)|} \bigg(\int_{B(0,r+s)}|f(z)|\,dz\bigg)^2\\
\le & \bigg(\frac{2s^{d+\alpha_1}}{|B(0,s)|}\bigg)\int\int_{\{|x-y|\le s\}}\frac{(f(x)-f(y))^2}{|x-y|^{d+\alpha_1}}\,dx\,dy\\
&+ \frac{2}{|B(0,s)|} \bigg(\int_{B(0,r+s)}|f(z)|\,dz\bigg)^2.\endaligned$$

In particular, for any $f\in C_c^2(\R^d)$ and $0<s\le \kappa\le r,$
$$\aligned\int_{B(0,r)}f^2(x)\,dx\le &c_3\bigg[s^{\alpha_1}D_{\alpha_1,\kappa}(f,f)+
s^{-d}\bigg(\int_{B(0,r+\kappa)}|f(z)|\,dz\bigg)^2\bigg], \endaligned$$
which implies that there exists a constant $s_0:=s_0(\kappa)>0$ such
that for all $s\in(0,s_0]$, $$\aligned\int_{B(0,r)}f^2(x)\,dx\le &s
D_{\alpha_1,\kappa}(f,f)\\
&+
c_4s^{-d/\alpha_1}\bigg(\int_{B(0,r+\kappa)}|f(z)|\,dz\bigg)^2,\quad
r\ge\kappa,f\in C_c^2(\R^d). \endaligned$$ This proves the desired
assertion \eqref{pro-1-01}.
\end{proof}

\begin{remark} One also can derive the inequality \eqref{pro-1-01} along the lines of the proof of \cite[Lemma 2.1]{CW14}. Indeed, when $\kappa=1$, the inequality \eqref{pro-1-01} is just
\cite[Lemma 2.1]{CW14}. For general $\kappa>0$, the proof of \eqref{pro-1-01} is almost the same as that of \cite[Lemma 2.1]{CW14}, and one only need to replace $B(0,\frac{1}{2})$ in \cite[(2.17)]{CW14} by  $B(0,\frac{\kappa}{2})$.\end{remark}

\subsection{Sobolev inequalities}

\begin{proposition}\label{ppp-444} If $d>\alpha_1$, then there exists a constant $c_0(\kappa)>0$ such that the following Sobolev inequality holds
\begin{equation}\label{e3-3}
\begin{split}
\|f\|_{L^{2d/(d-\alpha_1)}(\R^d;dx)}^2 & \le c_0(\kappa)\bigg[D(f,f)+\|f\|_{L^{2}(\R^d;dx)}^2\bigg],\quad f \in C_c^{\infty}(\R^d).
\end{split}
\end{equation}
\end{proposition}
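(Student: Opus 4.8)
The plan is to derive \eqref{e3-3} from the classical fractional Sobolev inequality, using that the Dirichlet form $D$ differs from the Gagliardo energy of order $\alpha_1/2$ only by a term controlled by $\|f\|_{L^2(\R^d;dx)}^2$; thus the Sobolev embedding for the fractional Laplacian $(-\Delta)^{\alpha_1/2}$ carries over to $D$.

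First I would record two elementary comparisons for $f\in C_c^\infty(\R^d)$. On one hand, since $J\ge 0$ and $J(x,y)\ge c_1|x-y|^{-d-\alpha_1}$ on $\{0<|x-y|\le\kappa\}$ by \eqref{e3-1},
\[
D(f,f)\ge \iint_{\{|x-y|\le\kappa\}}\big(f(x)-f(y)\big)^2J(x,y)\,dx\,dy\ge c_1\iint_{\{|x-y|\le\kappa\}}\frac{\big(f(x)-f(y)\big)^2}{|x-y|^{d+\alpha_1}}\,dx\,dy.
\]
On the other hand, using $\big(f(x)-f(y)\big)^2\le 2f(x)^2+2f(y)^2$, the symmetry of the kernel and Tonelli's theorem,
\[
\iint_{\{|x-y|>\kappa\}}\frac{\big(f(x)-f(y)\big)^2}{|x-y|^{d+\alpha_1}}\,dx\,dy\le 4\Big(\int_{\{|z|>\kappa\}}|z|^{-d-\alpha_1}\,dz\Big)\|f\|_{L^2(\R^d;dx)}^2,
\]
the last integral being finite and equal to a dimensional multiple of $\kappa^{-\alpha_1}$. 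Adding these two displays gives a constant $c=c(d,\alpha_1)$ with
\[
\iint_{\R^d\times\R^d}\frac{\big(f(x)-f(y)\big)^2}{|x-y|^{d+\alpha_1}}\,dx\,dy\le c_1^{-1}D(f,f)+c\,\kappa^{-\alpha_1}\|f\|_{L^2(\R^d;dx)}^2.
\]

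Next I would invoke the fractional Sobolev inequality: since $0<\alpha_1<2$ and $d>\alpha_1$, the embedding $\dot H^{\alpha_1/2}(\R^d)\hookrightarrow L^{2d/(d-\alpha_1)}(\R^d)$ (equivalently the Hardy--Littlewood--Sobolev inequality applied to $(-\Delta)^{\alpha_1/4}f$) provides a constant $C=C(d,\alpha_1)$ such that
\[
\|f\|_{L^{2d/(d-\alpha_1)}(\R^d;dx)}^2\le C\iint_{\R^d\times\R^d}\frac{\big(f(x)-f(y)\big)^2}{|x-y|^{d+\alpha_1}}\,dx\,dy,\qquad f\in C_c^\infty(\R^d).
\]
Combining this with the previous display yields \eqref{e3-3} with $c_0(\kappa):=C\big(c_1^{-1}\vee c\,\kappa^{-\alpha_1}\big)$, which depends only on $d$, $\alpha_1$, $c_1$ and $\kappa$.

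I do not expect a genuine obstacle: the only non-elementary ingredient is the fractional Sobolev inequality, which I would simply quote rather than reprove. If a self-contained route is preferred, the first display shows that $D$ dominates, up to an $L^2$-error, the Dirichlet form of the truncated $\alpha_1$-stable process; the on-diagonal heat-kernel bound of order $t^{-d/\alpha_1}$ for $(-\Delta)^{\alpha_1/2}$ (such estimates also hold for truncated stable-like forms, see e.g.\ \cite{CKK2}) then gives \eqref{e3-3} via the Nash-type equivalence between ultracontractivity bounds and Sobolev inequalities, e.g.\ \cite[Theorem 3.3.13]{WBook}.
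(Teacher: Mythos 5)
Your proof is correct and follows essentially the same route as the paper: compare $D$ to the Gagliardo $\alpha_1/2$-energy by splitting into $\{|x-y|\le\kappa\}$ (controlled by \eqref{e3-1}) and $\{|x-y|>\kappa\}$ (controlled by $\|f\|_{L^2}^2$), then invoke the fractional Sobolev inequality (the paper cites \cite[(2.3)]{CK}). The only difference is cosmetic — the paper applies Sobolev first and then splits, while you split first and then apply Sobolev.
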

\begin{proof} According to \cite[(2.3)]{CK} and (\ref{e3-1}), we get that for any $f\in C_c^{\infty}(\R^d)$, $$\aligned
\|f\|_{L^{{2d}/({d-\alpha_1})}(\R^d;dx)}^2 &\le c_5
\int_{\R^d}\int_{\R^d}\frac{\left(f(x)-f(y)\right)^2}{|x-y|^{d+\alpha_1}}dxdy\\
& \le c_5
\int\int_{\{|x-y|\le \kappa\}}\frac{\left(f(x)-f(y)\right)^2}{|x-y|^{d+\alpha_1}}dxdy
+c_6(\kappa)\|f\|_{L^{2}(\R^d;dx)}^2\\
&\le c_1c_5 D(f,f)+c_6(\kappa)\|f\|_{L^{2}(\R^d;dx)}^2.
\endaligned$$
This completes the proof.
\end{proof}
\subsection{General results about intrinsic ultracontractivity of Feynman-Kac
semigroups}\label{subsection3-4}
According to Propositions \ref{p3-1},  \ref{l3-4}, \ref{ppp-444} and
Theorem \ref{p2-1}, we immediately have the following statement.
\begin{theorem}\label{t3-1} Let $(T_t^V)_{t\ge 0}$ be a compact Feynman-Kac semigroup on $L^2(\R^d,dx)$ given in the beginning of this section (or in Subsection \ref{subsection1-1}), and $V$ be a locally bounded non-negative measurable function on $\R^d$ such that assumptions {\bf (A2)} and {\bf (A4)} hold.
For some  $\varepsilon\in
(0,\min({1}/{11},\varepsilon_0))$, define
\begin{equation}\label{t3-1-1}
\varphi(x):=\exp\Big(-\frac{1}{\kappa(1-6\varepsilon)}|x|\log(1+|x|+\sup_{|z|\le |x|+2\varepsilon\kappa}V(z))\Big),
\end{equation} and
\begin{equation*}\label{t3-1-3}
\begin{split}
\alpha(r,s):=\frac{ c(\kappa)}{\inf_{|x|\le r+\kappa}
\varphi^2(x)}  \big(1+s^{-\frac{d}{\alpha_1}}\big),
\end{split}
\end{equation*}  where $\varepsilon_0$ is the constant in Lemma $\ref{l3-2}$ and $c(\kappa)$ is the constant in Proposition
$\ref{l3-4}$.
For the constant $K$ in {\bf (A4)}, let \begin{equation*}\label{t3-1-3}
\begin{split}
\Phi(R):=&\inf_{x \in \R^d: |x|\ge R, V(x)>K}V(x),\\  \Theta(R):=&
\big|\{x \in \R^d: |x|\ge R, V(x)\le K\}\big|,\\
\Psi(R):=&\frac{1}{\Phi(R)}+c_0(\kappa)\Theta(R)^{{\alpha_1}/{d}},\end{split}
\end{equation*}where $c_0(\kappa)$ is a constant in \eqref{e3-3}.
We furthermore define
\begin{equation}\label{t3-1-3a}
\begin{split}\beta_\Phi(s):=&\left[1+\alpha\left(
\Phi^{-1}\left(\frac{2}{s}\right), \frac{s}{2}\right)\right],\\
 \beta_\Psi(s):=&\left[1+\alpha\left( \Psi^{-1}\left(\frac{s}{4}\right),
\frac{s}{4}\right)\right],\\
\gamma(s):=&\Theta^{-1}\left(\frac{s}{2}\right).
\end{split}\end{equation}
\begin{enumerate}
\item [(1)]
If $\lim_{|x|\to\infty}V(x)=\infty$, and
\begin{equation*}
\int_t^{\infty}\frac{\beta_\Phi^{-1}(s)}{s}\,ds<\infty,\quad t\gg1,
\end{equation*}
then $(T_t^V)_{t \ge 0}$ is intrinsically ultracontractive.

\item[(2)]  If $d>\alpha_1$ and
\begin{equation*}
\int_t^{\infty}\frac{\beta_\Psi^{-1}(s)}{s}\,ds<\infty,\quad t\gg1,
\end{equation*}
then $(T_t^V)_{t \ge 0}$ is intrinsically ultracontractive.

\item [(3)] Suppose that there exists a constant $\delta>1$ such that
\begin{equation}\label{t3-1-4}
\sum_{n=1}^{\infty}\gamma(s_n)\delta^n<\infty
\end{equation}
and
\begin{equation}\label{t3-1-5}
\int_t^{\infty}\frac{\tilde \beta^{-1}_\Phi(s)}{s}\,ds<\infty,\quad t\gg1,
\end{equation}
where $s_n:=\beta_\Phi^{-1}(\frac{c_1 \delta^n}{2})$ with
$c_1:=\|\phi_1\|_{\infty}^2$, and
 \begin{equation}\label{t3-1-5a}
 \tilde
\beta_\Phi(s):=2\beta_\Phi\left(\gamma^{-1}\left(\frac{1}{4\delta^{n_0(s)+1}}\right)\right)
\end{equation}
with
\begin{equation*}
\begin{split}
n_0(s):=\inf\Bigg\{N \ge1:\frac{4\delta(\sqrt{\delta}+1)s_{N}}{\sqrt{\delta}-1}
+2\gamma^{-1}\left(\frac{1}{4\delta^{N+1}}\right)\le s\Bigg\}.
\end{split}
\end{equation*}
Then $(T_t^V)_{t \ge 0}$ is intrinsically ultracontractive.
\end{enumerate}

\end{theorem}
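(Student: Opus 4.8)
The plan is to obtain Theorem \ref{t3-1} as a direct specialization of the abstract criterion in Theorem \ref{p2-1}, feeding in Propositions \ref{p3-1}, \ref{l3-4} and \ref{ppp-444} as the concrete ingredients. First I would check that Assumption $(A)$ of Theorem \ref{p2-1} is automatic under the standing hypotheses of this section: compactness of $(T_t^V)_{t\ge0}$ is Proposition \ref{p1-1}, and the boundedness, continuity and strict positivity of $\phi_1$ is Proposition \ref{p1-2}; Assumption $(B)$ is precisely {\bf (A2)} together with {\bf (A4)}, both assumed. Since $C_c^2(\R^d)$ is dense in $\mathscr{D}(D^V)$ it serves as a core, so Theorem \ref{p2-1} applies to $(D^V,\mathscr{D}(D^V))$ once conditions $(1)$ and $(2)$ there are verified.

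Next I would fix $\varepsilon\in(0,\min(1/11,\varepsilon_0))$ and take $\varphi$ as in \eqref{t3-1-1}. Proposition \ref{l3-4} applied with this $\varphi$ gives, for all $r\ge\kappa$, exactly condition $(1)$ with $r_0=\kappa$ and rate function $\alpha(r,s)=\frac{c(\kappa)}{\inf_{|x|\le r+\kappa}\varphi^2(x)}\bigl(1+s^{-d/\alpha_1}\bigr)$. Comparing \eqref{t3-1-1} with the lower bound \eqref{bbb} of Proposition \ref{p3-1} yields at once $\varphi(x)\le e^{c_3(\kappa,\varepsilon)}\phi_1(x)$ for all $x\in\R^d$, which is condition $(2)$ with $C_0=e^{c_3(\kappa,\varepsilon)}$. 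For part $(2)$ I would in addition invoke the Sobolev inequality \eqref{e3-3} of Proposition \ref{ppp-444} (available since $d>\alpha_1$); because $V\ge0$ forces $D^V\ge D$, this gives \eqref{p2-2-1} with $p=2d/(d-\alpha_1)>2$, hence $(p-2)/p=\alpha_1/d$ and $\Psi(R)=\frac{1}{\Phi(R)}+c_0(\kappa)\Theta(R)^{\alpha_1/d}$ as stated.

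It then remains to translate the conclusions of Theorem \ref{p2-1} into the present notation. The rate function produced there is $\beta(s)=C_0^2\alpha(\Phi^{-1}(2/s),s/2)$ (and, in part $(ii)$, $\hat\beta(s)=2C_0^2\alpha(\Psi^{-1}(s/4),s/4)$), together with the associated $\gamma$ and $\tilde\beta$; these differ from the concretely defined $\beta_\Phi$, $\beta_\Psi$, $\tilde\beta_\Phi$ of \eqref{t3-1-3a}--\eqref{t3-1-5a} only by multiplicative and additive constants and an inner rescaling of the argument, none of which alters the finiteness of $\int_t^\infty\frac{\beta^{-1}(s)}{s}\,ds$ or the convergence of $\sum_n\gamma(s_n)\delta^n$. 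Hence part $(1)$ is Theorem \ref{p2-1}$(i)$ under $\lim_{|x|\to\infty}V(x)=\infty$, part $(2)$ is Theorem \ref{p2-1}$(ii)$ when $d>\alpha_1$, and part $(3)$ is Theorem \ref{p2-1}$(iii)$ under \eqref{t3-1-4}--\eqref{t3-1-5}. I expect the only delicate point to be exactly this last bookkeeping: one must keep straight that $\Phi$ is increasing to $\infty$ while $\Theta$ is decreasing to $0$, so that $\Phi^{-1}$ is the usual right inverse but $\Theta^{-1}$ should be read as $\Theta^{-1}(r)=\inf\{s>0:\Theta(s)\le r\}$, and then verify that all the monotonicities line up so the integral and summation criteria are preserved when the abstract rates are replaced by $\beta_\Phi,\beta_\Psi,\gamma,\tilde\beta_\Phi$. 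Beyond that there is no new analytic content; the theorem simply packages Propositions \ref{l3-4}, \ref{ppp-444}, \ref{p3-1} and Theorem \ref{p2-1}.
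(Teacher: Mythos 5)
Your proposal is correct and follows the same route as the paper, which proves Theorem~\ref{t3-1} precisely by specializing Theorem~\ref{p2-1} with Propositions~\ref{p3-1}, \ref{l3-4} and \ref{ppp-444} supplying the lower bound on $\phi_1$, the local super Poincar\'e inequality, and the Sobolev inequality respectively. You in fact spell out the verification of conditions $(1)$ and $(2)$ and the constant $C_0=e^{c_3(\kappa,\varepsilon)}$ more explicitly than the paper's one-line citation does, and your remark about tracking the conventions for $\Phi^{-1}$ and $\Theta^{-1}$ (the latter being the generalized inverse of a decreasing function) correctly flags the only bookkeeping subtlety.
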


\section{Proofs of Theorems and Examples}\label{section4}
In the section, we will give the proofs of all the statements in Section \ref{section1}. First, we present the

\begin{proof}[Proof of Theorem $\ref{thm2}$]
(1) It is clear that $$\lim_{|x|\to\infty}V(x)=\infty.$$  Let
$\varepsilon\in (0,\min({1}/{11},\varepsilon_0))$. Since
$$V(x)\le c_4|x|^{\theta_3}\log^{\theta_4}(1+|x|),\quad x\in\R^d,$$ we have the following estimate for the function
$\varphi$ given by (\ref{t3-1-1})
\begin{equation*}
\varphi(x)\ge \exp\Big(-\frac{\theta_3}{\kappa(1-7\varepsilon)}(1+|x|)\log(1+|x|)-
C(\kappa,\varepsilon,\theta_3,\theta_4)\Big).
\end{equation*}
On the other hand, since $$V(x)\ge
c_3|x|^{\theta_1}\log^{\theta_2}(1+|x|),\quad x\in\R^d,$$ we obtain
\begin{equation*}\label{t1-3}
\begin{split}
\Phi(r)\ge c_3r^{\theta_1}\log^{\theta_2}(1+r).
\end{split}
\end{equation*}
Therefore, the rate function
$\beta_\Phi(r)$ defined by (\ref{t3-1-3a}) satisfies that for $s>0$ small enough
\begin{equation*}
\beta_\Phi(s)\le  C(\kappa, \varepsilon)\exp\left\{C(\kappa,
\varepsilon,\theta_3,\theta_4)
\Big(1+s^{-\frac{1}{\theta_1}}\log^{1-\frac{\theta_2}{\theta_1}}\big(1+s^{-1}\big)\Big)\right\}.
\end{equation*}
In particular, $$\beta_\Phi^{-1}(r)\le \frac{C}{\log^{\theta_1}(1+
r)\log^{\theta_2-\theta_1}\log(e+r)},\quad r>0\textrm{ large enough}.$$
Then, if $\theta_1=1$ and $\theta_2>2$ or if $\theta_1>1$, the intrinsic ultracontractivity of
$(T_t^V)_{t \ge 0}$ immediately follows from
Theorem \ref{t3-1}(1).

(2)
The required lower bound for
the ground state $\phi_1$ immediately follows from \eqref{bbb}.
Next, we will verify the upper bound. If $\theta_1=1$ and $\theta_2>2$ or if $\theta_2>1$,
then the semigroup $(T_t^V)_{t\ge0}$ is intrinsically ultracontractive. For any $0<\lambda<\theta_1$, let
\begin{equation*}
\psi(x):=\exp\Big(-\frac{\lambda}{2\kappa} \sqrt{1+|x|^2}\log(1+|x|^2)\Big).
\end{equation*}

Suppose that assumption {\bf (A4)} holds and for any $x,y\in\R^d$ with $|x-y|>\kappa$, $J(x,y)=0$.
Then, the generator $L^V$ of the associated Feynman-Kac semigroup
$(T^V_t)_{t\ge0}$ enjoys the expression
(\ref{ope11}). By the approximation argument, it is easy to verify that $\psi \in \D(L^V)$. For
$|x|$ large enough,  we obtain by the mean value theorem that
\begin{equation*}
\begin{split}
 L^V \psi(x)&\le C_1 \sup_{z\in B(x,\kappa)}\Big(
\big|\nabla \psi(z)\big|+\big|\nabla^2 \psi(z)\big|\Big)-V(x)\psi(x)\\
&\le C_2(\kappa, \lambda)\log^2(1+|x|)
\cdot \exp\Big(-\frac{\lambda}{\kappa} (|x|-\kappa)\log\big(1+(|x|-\kappa)\big)-C_3(\kappa,\lambda)\Big)\\
&\quad -c_3(1+|x|)^{\theta_1}\log^{\theta_2}(1+|x|)\psi(x)\\
&\le C_4(\kappa, \lambda) (1+|x|)^{\lambda}\log^2(1+|x|)\psi(x)-c_3(1+|x|)^{\theta_1}\log^{\theta_2}(1+|x|)\psi(x).
\end{split}
\end{equation*}
Since $0<\lambda<\theta_1$, $$L^V \psi(x)\le 0$$ for $|x|$ large
enough. Note that the function $x\mapsto L^V \psi(x)$ is locally
bounded, we know from the inequality above that (\ref{t2-1-0}) holds
with some constant $\lambda>0$. Therefore, the required upper bound
for $\phi_1$ follows from Proposition \ref{pro--00}.
\end{proof}

Indeed, according to Theorem \ref{t3-1}(1), we also have the following statements. The proofs are similar to that of Theorem \ref{thm2}, and so we omit them here.

\begin{proposition}\label{pro} Suppose that \eqref{e3-1}, \eqref{e3-2}, assumptions {\bf(A1)} and {\bf (A2)} hold. Then, we have the following two assertions.
\begin{itemize}
\item[(1)] If there are positive constants $c_5$, $c_6$, $\theta_5$, $\theta_6$ with $\theta_5>\theta_6+1$ such that for all $x\in\R^d$,
$$ c_5(1+|x|^{\theta_5}) \le V(x)\le e^{c_6(1+|x|^{\theta_6})},$$
then $(T_t^V)_{t \ge 0}$ is intrinsically ultracontractive, and for any $\varepsilon>0$ there is a constant $C_3:=C_3(\varepsilon)>0$ such that for all $x\in\R^d$,
$$C_3\exp\Big(-\frac{(1+\varepsilon)}{\kappa} |x|^{\theta_6+1}\Big)\le
\phi_1(x).$$

Additionally, if moreover {\bf (A3)} also holds and
 $$J(x,y)=0,\quad x,y\in\R^d\textrm{ with } |x-y|>\kappa,$$ then for any $\varepsilon>0$, there exists a constant
$C_4:=C_4(\varepsilon)>0$ such that for all $x\in\R^d$,
$$
\phi_1(x) \le C_4\exp\Big(-\frac{(1-\varepsilon)  \theta_5}{\kappa}
|x|\log(1+|x|)\Big).
$$

\item [(2)] If there are positive constants $c_7$, $c_8$, $\theta_7$, $\theta_8$ with $\theta_7\le \theta_8$ such that for all $x\in\R^d$,
$$e^{c_7(1+|x|^{\theta_7})} \le V(x)\le e^{c_8(1+|x|^{\theta_8})},$$
then $(T_t^V)_{t \ge 0}$ is intrinsically ultracontractive, and for any $\varepsilon>0$ there is a constant $C_5:=C_5(\varepsilon)>0$ such that for all $x\in\R^d$,
$$C_5\exp\Big(-\frac{c_8(1+\varepsilon)}{\kappa}
|x|^{\theta_8+1}\Big)\le\phi_1(x).$$

Additionally, if moreover {\bf (A3)} also holds and
 $$J(x,y)=0,\quad x,y\in\R^d\textrm{ with } |x-y|>\kappa,$$ then for any $\frac{\theta_7}{\theta_7+1}<\varepsilon<1$,  there exists a constant
$C_6:=C_6(\varepsilon)>0$ such that for all $x\in\R^d$,
$$
\phi_1(x)\le
C_6\exp\Big(-\frac{c_7(1-\varepsilon)}{\kappa} |x|^{\theta_7+1}\Big).
$$

\end{itemize}
\end{proposition}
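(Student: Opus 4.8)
The plan is to reproduce, in this setting, the two-step argument behind the proof of Theorem~\ref{thm2}. In both cases~(1) and~(2) the hypotheses force $\lim_{|x|\to\infty}V(x)=\infty$, so assumptions~\textbf{(A2)} and~\textbf{(A4)} hold and we are in the framework of Theorem~\ref{t3-1}(1): the intrinsic ultracontractivity together with the lower bound for $\phi_1$ will come from Theorem~\ref{t3-1}(1) and Proposition~\ref{p3-1}, and the upper bound for $\phi_1$ will come from Proposition~\ref{pro--00} after exhibiting a positive $\psi\in C_b^2(\R^d)\cap L^2(\R^d;dx)\cap\D(L^V)$ with $L^V\psi\le\lambda\psi$.

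\emph{Intrinsic ultracontractivity and the lower bound.} Fix $\varepsilon\in(0,\min(1/11,\varepsilon_0))$ and let $\varphi$ be as in \eqref{t3-1-1}. In case~(1), $V(x)\le e^{c_6(1+|x|^{\theta_6})}$ gives $\log\big(1+|x|+\sup_{|z|\le|x|+2\varepsilon\kappa}V(z)\big)\le(1+\varepsilon)c_6|x|^{\theta_6}$ for $|x|$ large, hence $\varphi(x)\ge\exp\big(-\frac{(1+\varepsilon)c_6}{\kappa}|x|^{\theta_6+1}\big)$, while $V(x)\ge c_5(1+|x|^{\theta_5})$ gives $\Phi(r)\ge c_5 r^{\theta_5}$. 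Substituting these into the rate function $\beta_\Phi$ of \eqref{t3-1-3a} (the polynomial factor $1+s^{-d/\alpha_1}$ being absorbed), one obtains $\beta_\Phi(s)\le C\exp\big(C's^{-(\theta_6+1)/\theta_5}\big)$ for small $s$, so $\beta_\Phi^{-1}(r)\le C''(\log r)^{-\theta_5/(\theta_6+1)}$; since $\theta_5>\theta_6+1$ one has $\int_t^\infty\beta_\Phi^{-1}(s)s^{-1}\,ds<\infty$, and Theorem~\ref{t3-1}(1) yields intrinsic ultracontractivity. Plugging $V(x)\le e^{c_6(1+|x|^{\theta_6})}$ into \eqref{bbb} then gives the stated lower bound for $\phi_1$. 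Case~(2) runs identically with $\Phi(r)\ge e^{c_7 r^{\theta_7}}$ and $\varphi(x)\ge\exp\big(-\frac{(1+\varepsilon)c_8}{\kappa}|x|^{\theta_8+1}\big)$, which leads to $\beta_\Phi^{-1}(r)\le C\exp\big(-C'(\log r)^{\theta_7/(\theta_8+1)}\big)$; the integral $\int_t^\infty\beta_\Phi^{-1}(s)s^{-1}\,ds$ converges (positivity of the exponent $\theta_7/(\theta_8+1)$ already suffices), so intrinsic ultracontractivity holds, and \eqref{bbb} again gives the lower bound for $\phi_1$.

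\emph{The upper bound.} Here one assumes moreover~\textbf{(A3)} and $J(x,y)=0$ for $|x-y|>\kappa$, so that $L^V$ is given by \eqref{ope11} with jump range $\kappa$. In case~(1) take, as in the proof of Theorem~\ref{thm2}(2), $\psi(x)=\exp\big(-\frac{\lambda}{2\kappa}\sqrt{1+|x|^2}\log(1+|x|^2)\big)$ with $0<\lambda<\theta_5$; the mean value theorem bounds $L^V\psi(x)$ by $C(\kappa,\lambda)(1+|x|)^\lambda\log^2(1+|x|)\psi(x)-c_5(1+|x|)^{\theta_5}\psi(x)$, which is $\le0$ for $|x|$ large since $\lambda<\theta_5$; Proposition~\ref{pro--00} gives $\phi_1\le c_1\psi$, and letting $\lambda\uparrow\theta_5$ yields the claimed estimate. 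In case~(2) take $\psi(x)=\exp\big(-\frac{a}{\kappa}(1+|x|^2)^{(\theta_7+1)/2}\big)$. Because the $\kappa$-range jump part of $L^V$ averages $\psi$ over $B(x,\kappa)$ and $\sup_{z\in B(x,\kappa)}\psi(z)\le\psi(x)\exp\big(a(\theta_7+1)|x|^{\theta_7}(1+o(1))\big)$, the positive part of $L^V\psi(x)$ is of order $|x|^{2\theta_7}\exp\big(a(\theta_7+1)|x|^{\theta_7}\big)\psi(x)$, whereas $V(x)\psi(x)\ge e^{c_7|x|^{\theta_7}}\psi(x)$; hence $L^V\psi\le0$ for $|x|$ large as soon as $a(\theta_7+1)<c_7$, and Proposition~\ref{pro--00} gives $\phi_1\le c_1\psi$. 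Letting $a\uparrow c_7/(\theta_7+1)$ produces exactly the stated bound, the requirement $a<c_7/(\theta_7+1)$ being the source of the restriction $\theta_7/(\theta_7+1)<\varepsilon<1$. In all cases one also checks, by the approximation argument of Theorem~\ref{thm2}(2), that $\psi\in C_b^2(\R^d)\cap L^2(\R^d;dx)\cap\D(L^V)$.

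\emph{Main obstacle.} The two integrability computations above are routine. The delicate point is the construction of the supersolution in case~(2): the decay $\exp\big(-\frac{a}{\kappa}|x|^{\theta_7+1}\big)$ of $\psi$ is inflated, under the finite-range jump operator, by a factor $\exp\big(a(\theta_7+1)|x|^{\theta_7}\big)$ arising from comparing $|x|^{\theta_7+1}$ with $(|x|-\kappa)^{\theta_7+1}$, and this amplification must be beaten by the double-exponential potential $V(x)\ge e^{c_7|x|^{\theta_7}}$; balancing these two exponentials of order $|x|^{\theta_7}$ forces $a(\theta_7+1)<c_7$, which is precisely why the exponent in the upper bound for $\phi_1$ loses the factor $1/(\theta_7+1)$ relative to the lower bound.
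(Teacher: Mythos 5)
Your proposal is correct and follows precisely the route the paper itself indicates (the paper explicitly says the proofs are ``similar to that of Theorem~\ref{thm2}'' and omits them): intrinsic ultracontractivity plus the lower bound on $\phi_1$ via Theorem~\ref{t3-1}(1) and Proposition~\ref{p3-1}, the upper bound via the supersolution criterion of Proposition~\ref{pro--00}, and in case~(2) a supersolution of the new form $\psi(x)=\exp\bigl(-\tfrac{a}{\kappa}(1+|x|^2)^{(\theta_7+1)/2}\bigr)$. Your asymptotic computations of $\beta_\Phi^{-1}$ in both cases and the balancing argument that produces the threshold $a(\theta_7+1)<c_7$ (hence $\varepsilon>\theta_7/(\theta_7+1)$) are correct and explain exactly where the loss by a factor $1/(\theta_7+1)$ in the upper bound comes from. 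One small remark: the literal substitution of $V\le e^{c_6(1+|x|^{\theta_6})}$ into \eqref{bbb} yields $\exp\bigl(-\tfrac{(1+\varepsilon')c_6}{\kappa}|x|^{\theta_6+1}\bigr)$, i.e.\ the constant $c_6$ persists (exactly as $c_8$ persists in case~(2)); the absence of $c_6$ in the paper's stated lower bound for case~(1) appears to be a typographical slip rather than something your argument should try to reproduce.
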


\ \

Next, we turn to the

\begin{proof}[Proof of Example $\ref{ex2}$]
(1) According to Theorem \ref{thm2}, we know
that $(T_t^V)_{t \ge 0}$ is intrinsically ultracontractive if $V(x)=|x|^{\theta}$ for some $\theta>1$. Now we are going to
verify that
if
$V(x)=|x|^{\theta}$ for some $0<\theta\le 1$, then $(T_t^V)_{t \ge 0}$ is not intrinsically ultracontractive.
We mainly use the method of  \cite[Theorem 1.6]{KS} (see \cite[pp. 5055-5056]{KS})
and disprove \cite[Condition 1.3, p.\ 5027]{KS}.
In fact, according to \cite[Condition 1.3, p.\ 5027]{KS}, if $(T_t^V)_{t \ge 0}$
is intrinsically ultracontractive, then for
every fixed $t \in (0,1]$, there exists a constant $C_t>0$ such that
\begin{equation}\label{ex2-0}
T_t^V(\I_D)(x)\ge C_t T_t^V(\I_{B(x,1)})(x).
\end{equation}

Let $p(t,x,y)$ be the heat kernel for the associated process $(X_t)_{t\ge0}$. According to \cite[(1.16) in Theorem 1.2 and (1.20) in Theorem 1.4]{CKK1}, for
any fixed $t\in(0,1]$ and $|x-y|$ large enough,
\begin{equation*}
p(t,x,y)\le  C_1t\exp\bigg(-C_2|x-y|\Big(\log\frac{|x-y|}{t}\Big)^{\frac{\gamma-1}{\gamma}}\bigg).
\end{equation*}
Set $D=B(0,1)$. For $|x|$ large enough,
\begin{equation*}
\begin{split}
& T_t^V(\I_D)(x)\le \int_D p(t,x,y)\,dy \le
C_3t\exp\Big(-C_2(|x|-1)\Big(\log\frac{|x|-1}{t}\Big)^{\frac{\gamma-1}{\gamma}}\Big).
\end{split}
\end{equation*}

On the other hand, for $|x|$ large enough,
\begin{equation*}
\begin{split}
 T_t^V(\I_{B(x,1)})(x)&\ge
\Ee^x\Big(\tau_{B(x,1)}>t;
\exp\Big(-\int_0^t V(X_s)ds\Big)\Big)\\
&\ge C\Pp^x\big(\tau_{B(x,1)}>t\big)e^{-t|x|^{\theta}}\\
&\ge C\Pp^x\big(\tau_{B(x,1)}>1\big)e^{-t|x|^{\theta}}\\
&\ge C e^{-t|x|^{\theta}}.
\end{split}
\end{equation*}
Combining both conclusions above with the fact that $\theta\in(0,1]$, we
get that for any fixed $t\in(0,1]$, the inequality (\ref{ex2-0}) does not hold for
any constant $C_t>0$,
which contradicts with \cite[Condition 1.3, p.\ 5027]{KS}. Hence, according to the remark below \cite[Condition 1.3, p.\ 5027]{KS}, the semigroup
$(T_t^V)_{t \ge 0}$ is not intrinsically ultracontractive.

(2) If $\gamma=\infty$ and $\theta>1$, then the ground state estimate (\ref{ex2-1}) immediately follows from
Theorem \ref{thm2}. When $1<\gamma<\infty$ and $\theta>1$, one can apply Proposition \ref{p3-1}
to get a lower bound estimate for $\phi_1$, which however is not optimal. Instead, we will adopt a slightly different argument
from that of
Proposition \ref{p3-1}, and will
derive a more accurate lower bound estimate, which is partly inspired by \cite[Theorem 5.4]{CKK1}.

For any $\lambda>0$, we choose a constant $$0<\varepsilon<\varepsilon_0\wedge\bigg(\frac{1}{2}\theta^{\frac{1}{\gamma}}
\Big(\big(1+\lambda)^{\frac{1}{\gamma}}-1\Big)\bigg), $$
where $\varepsilon_0>0$ is the same constant in
Lemma \ref{l3-2}.  For every $x \in \R^d$ with $$|x|\ge e^{2^\gamma\theta(1+2\varepsilon)^\gamma}\vee(e-1)\quad\textrm{ and  } \quad\theta^{-\frac{1}{\gamma}}|x|\log^{-\frac{1}{\gamma}}(1+|x|)\ge1,$$ let
$$n=\Big\lfloor\theta^{-\frac{1}{\gamma}}|x|\log^{-\frac{1}{\gamma}}(1+|x|)\Big\rfloor+1,$$
and $x_i:={i}x/n$  for any $0\le i\le n$, where $\lfloor x\rfloor$ denotes the largest integer less than or equal to $x$.  Next, for all $0\le i\le n$,
we set $D_i:=B(x_i, \varepsilon)$ and $\tilde D_i:=B(x_i, {\varepsilon}/{2})$.
Note that
\begin{equation}\label{ex2-3}
\theta^{-\frac{1}{\gamma}}|x|\log^{-\frac{1}{\gamma}}(1+|x|)\le n \le
\theta^{-\frac{1}{\gamma}}|x|\log^{-\frac{1}{\gamma}}(1+|x|)+1,
\end{equation}
we can check that  for each $0 \le i \le n-1$,
$$1\le \frac{1}{2}\theta^{\frac{1}{\gamma}}\log^{\frac{1}{\gamma}}(1+|x|)-
2\varepsilon\le\frac{1}{\theta^{-\frac{1}{\gamma}}\log^{-\frac{1}{\gamma}}(1+|x|)+1}-2\varepsilon\le dist(D_i,D_{i+1})$$
and for every $z_i \in D_i$ and $z_{i+1}\in D_{i+1}$,
$$|z_i-z_{i+1}|\le  \frac{|x|}{n}+2\varepsilon\le {\theta^{\frac{1}{\gamma}}\log^{\frac{1}{\gamma}}(1+|x|)}+2\varepsilon\le  \Big((1+\lambda)\theta\log(1+|x|)\Big)^{\frac{1}{\gamma}}.$$
In the following, we define for all $n\ge1$
\begin{equation*}
\begin{split}
\tilde{\tau}_{D_i}:&=\inf\{t\ge \tilde{\tau}_{D_{i+1}}: X_t\notin D_i\},\quad 1\le i\le n-1; \\
\tilde{\tau}_{D_n}:&={\tau}_{D_n}.
\end{split}
\end{equation*}
By the convention, we also set $\tilde{\tau}_{D_{n+1}}=0$.
Let $T(1,\varepsilon)$ be the same constant in Lemma \ref{l3-2}
with $\kappa=1$.

First, if $X_{\tilde \tau_{D_{i}}} \in \tilde D_{i-1}$, then we have for each $i\ge2$, $j\ge1$ and
$t_0=T(1,\varepsilon)$,
\begin{align*}
& \Pp^{X_{\tilde \tau_{D_{i}}}}\Big(\frac{t_0}{(j+1)n}\le \tau_{D_{i-1}}<\frac{t_0}{jn},X_{\tau_{D_{i-1}}}\in
\tilde D_{i-2}\Big)\\
&\ge \inf_{x \in \tilde D_{i-1}}\Pp^x\Big(\frac{t_0}{(j+1)n}\le \tau_{D_{i-1}}<\frac{t_0}{jn},X_{\tau_{D_{i-1}}}\in
\tilde D_{i-2}\Big)\\
&=\inf_{x \in \tilde D_{i-1}}\int_{\frac{t_0}{(j+1)n}}^{\frac{t_0}{jn}}\int_{D_{i-1}}
p_{D_{i-1}}(s,x,y)\int_{\tilde D_{i-2}}J(y,z)\,dz\,dy\,ds\\
&\ge C\inf_{x \in \tilde D_{i-1}}\int_{\frac{t_0}{(j+1)n}}^{\frac{t_0}{jn}}\int_{D_{i-1}}
p_{D_{i-1}}(s,x,y)\int_{\tilde D_{i-2}}e^{-|y-z|^{\gamma}}\,dz\,dy\,ds\\
&\ge C \inf_{x \in \tilde D_{i-1}}\int_{\frac{t_0}{(j+1)n}}^{\frac{t_0}{jn}}
\Pp^x\big(\tau_{D_{i-1}}>s\big)\,ds \, e^{-(1+\lambda)\big(\theta\log(1+|x|)\big)}\\
&\ge \frac{Ct_0}{j(j+1)n(1+|x|)^{(1+\lambda)\theta}}\inf_{x \in \tilde D_{i-1}}\Pp^x\Big(\tau_{D_{i-1}}>\frac{t_0}{jn}\Big)\\
&\ge \frac{Ct_0}{j(j+1)n(1+|x|)^{(1+\lambda)\theta}}\inf_{x \in \tilde D_{i-1}}\Pp^x\Big(\tau_{B(x,\varepsilon/2)}>t_0\Big)\\
&\ge \frac{C}{j(j+1)n(1+|x|)^{(1+\lambda)\theta}},
 \end{align*}
where the equality above is due to \eqref{l3-2-2},  in the third inequality we have used the fact that $|y-z|\le
\big((1+\lambda)\theta\log|x|\big)^{{1}/{\gamma}}$ for $y \in D_{i-1}$ and $z \in \tilde D_{i-2}$, and the last inequality follows from  Lemma \ref{l3-1}.

Hence, if $X_{\tilde \tau_{D_{i}}} \in \tilde D_{i-1}$, then we have for all $i\ge2$,
 \begin{align*}
& \Ee^{X_{\tilde \tau_{D_{i}}}}\Big(0<\tau_{D_{i-1}}<\frac{t_0}{n},X_{\tau_{D_{i-1}}}\in
\tilde D_{i-2}; \exp\Big(-\int^{\tau_{D_{i-1}}}_{0} V(X_s)\,ds\Big)\Big)\\
&\ge \sum_{j=1}^{\infty}\Ee^{X_{\tau_{D_{i}}}}\Big(\frac{t_0}{(j+1)n}\le \tau_{D_{i-1}}<\frac{t_0}{jn},X_{\tau_{D_{i-1}}}\in
\tilde D_{i-2}; \exp\Big(-\int^{\tau_{D_{i-1}}}_{0} V(X_s)\,ds\Big)\Big)\\
& \ge \sum_{j=1}^{\infty}\exp\Big(-\frac{t_0}{jn}\sup_{x \in D_{i-1}}V(x)\Big)\inf_{y \in \tilde D_{i-1}}
\Pp^y\Big(\frac{t_0}{(j+1)n}\le\tau_{D_{i-1}}<\frac{t_0}{jn}, X_{\tau_{D_{i-1}}}\in
\tilde D_{i-2}\Big)\\
&\ge \frac{C}{n(1+|x|)^{(1+\lambda)\theta}}\sum_{j=1}^{\infty}\frac{1}{j(j+1)}\exp\Big(-\frac{t_0}{jn}\sup_{x \in D_{i-1}}V(x)\Big)\\
&\ge \frac{C}{(1+|x|)^{(1+\lambda)\theta}\big(n+\sup_{x \in D_{i-1}}V(x)\big)},
 \end{align*}
where in the last inequality we have used  (\ref{rrr1}).

Furthermore, we find that (\ref{l3-3-2}) and (\ref{l3-3-3}) are still valid here. Therefore, combining with all the estimates
above, we obtain that for $|x|$ large enough
\begin{equation*}
\begin{split}
T_{t_0}^V(\I_D)(x)&  \ge C_4 \prod_{i=1}^n\Big(\frac{C_5}{|x|^{(1+\lambda)\theta}\big(n+\sup_{z \in D_{i-1}}V(z)\big)}\Big)\\
&\ge C_4\Big(\frac{C_5}{|x|^{(1+\lambda)\theta}\big(n+\sup_{|z|\le |x|+\varepsilon}V(z)\big)}\Big)^n\\
&\ge \exp\Big(-(1+\lambda)|x|\big(\theta\log|x|\big)^{-\frac{1}{\gamma}}\log(1+|x|^{(2+\lambda)\theta})-C(\theta)\Big)\\
&\ge \exp\Big(-(1+\lambda)(2+\lambda)\theta^{\frac{\gamma-1}{\gamma}}|x|\big(\log|x|\big)^{\frac{\gamma-1}{\gamma}}-C(\theta)(1+|x|)\Big),
\end{split}
\end{equation*}
where in the third inequality we have used  the property (\ref{ex2-3}).

Hence, following the same argument as that of Proposition \ref{p3-1}, we finally arrive at
\begin{equation*}
\begin{split}
\phi_1(x)\ge  \exp\Big(-(1+\lambda)(2+\lambda)\theta^{\frac{\gamma-1}{\gamma}}|x|\log^{\frac{\gamma-1}{\gamma}}(1+|x|)-
C(\theta)(1+|x|)\Big).
\end{split}
\end{equation*}
In particular, by taking $\lambda>0$ small enough in the inequality above, we indeed can get the lower bound estimate in (\ref{ex2-2})
with any constant $c_4>2$.

(3) Let
\begin{equation*}
\psi(x):=\exp\Big(-(c_0\theta)^{\frac{\gamma-1}{\gamma}}\sqrt{1+|x|^2}\log^{\frac{\gamma-1}{\gamma}}
\sqrt{1+|x|^2}\Big),
\end{equation*}
where $c_0>0$ is a constant to be determined later.

Under assumption {\bf (A3)}, by the approximation argument again it is easy to verify that $\psi \in
\D(L^V)$, we know from \eqref{ope11} that
  \begin{align*}
L^V \psi(x)&=\int_{\R^d} \Big(\psi(x+z)-\psi(x)-\langle \nabla \psi(x),z \rangle \I_{\{|z|\le 1\}}\Big)J(x,x+z)\,dz\\
&+\frac{1}{2}\int_{\{|z|\le 1\}}\langle\nabla \psi(x),
z\rangle\left(J(x,x+z)-J(x,x-z)\right)\,dz -V(x)\psi(x)\\
&\le c_1\sup_{z \in B(x,1)}\big(|\nabla^2 \psi(x)|+|\nabla \psi(x)|\big)\\
&+
\int_{\{|z|>1\}} \big(\psi(x+z)-\psi(x)\big)J(x,x+z)\,dz-V(x)\psi(x)\\
&=:I_1(x)+I_2(x)-V(x)\psi(x).
 \end{align*}

According to the proof of Theorem \ref{thm2} and the mean value theorem, for $|x|$ large enough,
\begin{equation*}
\begin{split}
I_1(x)& \le C(\theta)\log^{\frac{2(\gamma-1)}{\gamma}}(1+|x|)\exp\Big(-(c_0\theta)^{\frac{\gamma-1}{\gamma}}(|x|-1)
\log^{\frac{\gamma-1}{\gamma}}(|x|-1)\Big)\\
&\le C(\theta)\exp\Big((c_0\theta)^{\frac{\gamma-1}{\gamma}}\log^{\frac{\gamma-1}{\gamma}}(1+|x|)\Big)
\log^{\frac{2(\gamma-1)}{\gamma}}(1+|x|) \psi(x).
\end{split}
\end{equation*}

On the other hand,
\begin{equation*}
I_2(x)\le \int_{\{|z|>1\}} \psi(x+z)J(x,x+z)\,dz \le
c_2\int_{\{|z|>1\}} \psi(x+z)e^{-|z|^{\gamma}}\,dz.
\end{equation*}
For each $i\ge 1$, set $A_i:=B(0,i+1)\setminus B(0,i)$. Since
for every $x \in \R^d$ and $z \in A_i$, $$|x|-i-1\le |x+z| \le |x|+i+1,$$ we get that for any
$0<\lambda<1$ and any $x\in\R^d$ with $|x|$ large enough,
\begin{equation*}
\begin{split}
\int_{\{|z|>1\}} &\psi(x+z)e^{-|z|^{\gamma}}\,dz\\
&=
\sum_{i=1}^{\infty} \int_{A_i}\psi(x+z)e^{-|z|^{\gamma}}\,dz\\
& { \le c_3 \sum_{i=2}^{\infty}i^{d-1}e^{-(i-1)^{\gamma}}\sup_{|x|-i-1\le |z| \le |x|+i+1}
|\psi(z)|}\\
&{ \le c_4\psi(x)\sum_{i=2}^{\infty}i^{d-1}\exp\Big[\big((1+\lambda)c_0\theta
\log|x|\big)^{\frac{\gamma-1}{\gamma}}i-(i-1)^{\gamma}\Big],}
\end{split}
\end{equation*}
where in the first inequality we have used the fact that $|A_i|\le Ci^{d-1}$, and the second inequality follows from the fact that
for $|x|$ large enough $$\sup_{|z|\ge |x|-i-1}
|\psi(z)|\le \exp\Big[\big((1+\lambda)c_0\theta\log|x|\big)^{\frac{\gamma-1}{\gamma}}i\Big]\psi(x),$$ thanks to again the mean value theorem.

Furthermore, set $N(x):=\Big\lfloor
(6(1+\lambda)c_0\theta\log|x|)^{\frac{1}{\gamma}}\Big\rfloor+1$. Noticing that
for every $i>N(x)$,
$$\big((1+ \lambda)c_0\theta\log|x|\big)^{\frac{\gamma-1}{\gamma}}i-(i-1)^{\gamma}\le -\frac{i^{\gamma}}{2},$$
we have
 \begin{align*}
&\sum_{i=2}^{\infty}i^{d-1}\exp\left[\left(\left(1+
\lambda\right)c_0\theta\log|x|\right)^{\frac{\gamma-1}{\gamma}}i-(i-1)^{\gamma}
\right]\\
&=\sum_{i=2}^{N(x)}i^{d-1}\exp\left[\left(\left(1+ \lambda\right)c_0\theta\log|x|\right)^{\frac{\gamma-1}{\gamma}}i-
(i-1)^{\gamma}
\right]\\
&\quad +\sum_{i=N(x)+1}^{\infty}i^{d-1}\exp\left[\left(\left(1+ \lambda\right)c_0\theta\log|x|\right)^{\frac{\gamma-1}{\gamma}}i-(i-1)^{\gamma}
\right]\\
&\le N(x)^d\exp\left[\sup_{s \in \R}
\left(\left(\left(1+\lambda\right)c_0\theta\log|x|\right)^{\frac{\gamma-1}{\gamma}}s-(s-1)^{\gamma}\right)\right]+
\sum_{i=N(x)+1}^{\infty}i^{d-1}e^{-\frac{i^{\gamma}}{2}}\\
&\le C(\theta)
(\theta\log|x|)^{\frac{d}{\gamma}}\exp\left((1+2\lambda)c_0\theta\left(\left(\frac{1}{\gamma}\right)^{\frac{1}{\gamma-1}}
-\left(\frac{1}{\gamma}\right)^{\frac{\gamma}{\gamma-1}}\right)\log|x|\right)\\
&\quad+ \exp\bigg[{-\frac{(1+\lambda)c_0\theta }{4}}\log|x|\bigg] ,
 \end{align*}

where in the last inequality we have used the facts that for $|x|$ large enough,
{ \begin{equation*}
\begin{split}
& \sup_{s \in \R}
\left\{\left(\left(1+ \lambda\right)c_0\theta\log|x|\right)^{\frac{\gamma-1}{\gamma}}s-(s-1)^{\gamma}\right\}\\
&\le
(1+2\lambda)c_0\theta\left(\left(\frac{1}{\gamma}\right)^{\frac{1}{\gamma-1}}
-\left(\frac{1}{\gamma}\right)^{\frac{\gamma}{\gamma-1}}\right)\log|x|
\end{split}
\end{equation*}
}
and $$\sum_{i=n}^{\infty}i^{d-1}e^{-\frac{i^{\gamma}}{2}} \le e^{-\frac{n^{\gamma}}{4}}$$ for $n$ large enough.

Combining all the estimates above and taking $$c_0=
\frac{1}{2(1+2\lambda)
\left(\left(\frac{1}{\gamma}\right)^{\frac{1}{\gamma-1}}
-\left(\frac{1}{\gamma}\right)^{\frac{\gamma}{\gamma-1}}\right)},$$ we derive that
for $|x|$ large enough,
\begin{equation*}
\begin{split}
L^V (x)\psi (x)&\le C(\theta)|x|^{\frac{2}{3}\theta}\psi(x)-V(x)\psi(x)\le 0,
\end{split}
\end{equation*}
which implies that
\begin{equation*}
L^V \psi(x) \le C(\theta) \psi(x), \ \ x \in \R^d
\end{equation*}
for some constant $C(\theta)>0$. Therefore, according to Proposition \ref{pro--00},  we can obtain the desired upper bound estimate for $\phi_1$ as that in
(\ref{ex2-2}).
\end{proof}

\ \

At the last, we turn to the proofs of Theorem $\ref{thm3}$ and Example \ref{ex3}.

\begin{proof}[Proof of Theorem $\ref{thm3}$]
(1) Following the argument of  Theorem \ref{thm2}, for $R$, $r>0$ large
enough
$$
\Phi(R)\ge { C}R\log^{\theta_1}R$$ and
$$ \beta_\Phi^{-1}(r)\le \frac{C}{\log(1+
r)\log^{\theta_1-1}\log(e+r)}.$$
Hence, for $s>0$ small enough,
\begin{equation*}
\gamma(s)=\Theta\left(\Phi^{-1}\left(\frac{2}{s}\right)\right)
\le C_1\exp\left(-{ {C_2}c_6}\left(\frac{1}{s}\right)^{{\eta_1}}
\left(\log \frac{1}{s}\right)^{{\eta_2-\eta_1\theta_1}}\right).
\end{equation*}
Then,  for any fixed $\delta>e$, there is an integer $N_0(\delta)\ge1$ such that for all
$n\ge N_0(\delta)$,
\begin{equation*}
\begin{split}
& s_n:=\beta_\Phi^{-1}\left(c\delta^n\right)\le C_3(\log\delta)^{-1} n^{-1}
(\log n)^{-(\theta_1-1)},
\end{split}
\end{equation*}
where $C_3>0$ is independent of $\delta$. Therefore, for $n$ large enough,
\begin{equation}\label{thm3-1}
\gamma(s_n)\le C\exp\left[-c_6C_4\left(\log \delta\right)^{\eta_1}
n^{\eta_1}
\left(\log n\right)^{\eta_2-\eta_1}\right],
\end{equation}
where {$C_4$ } is a constant independent of $\delta$.

According to (\ref{thm3-1}),  if
$\eta_1=1$ and $\eta_2>1$, then (\ref{t3-1-4}) holds true, and we have the following estimate for the rate function
$\tilde \beta_\Phi(s)$ defined by (\ref{t3-1-5a})
\begin{equation}\label{thm3-2}
\tilde \beta_\Phi(s)\le C\exp\left[C\Big(1+\frac{1}{s}\Big)\left(\log^{1-{\theta_1}}\Big(1+ \frac{1}{s}\Big)\right)\right],
\end{equation}
which implies that (\ref{t3-1-5}) is satisfied. Therefore, according to
Theorem \ref{t3-1}(3),  we know that the semigroup $(T_t^V)_{t \ge 0}$ is intrinsically ultracontractive.

On the other hand, it follows from (\ref{thm3-1}) that, when $\eta_1=\eta_2=1$ and {
$c_6>\frac{1}{C_4}$,} (\ref{t3-1-4}) holds true. Then, following the arguments above, we can get the same estimate (\ref{thm3-2}) for
$\tilde \beta_\Phi(s)$ (possibly with different constant $C$ in (\ref{thm3-2})), and
so the semigroup $(T_t^V)_{t \ge 0}$ is intrinsically ultracontractive. The proof of the first assertions is complete.

(2) When $d>\alpha_1$, there exists a constant $C>0$ such that
for $R$ large enough
\begin{equation*}
\Psi(R)\le C\left(\frac{1}{R\log^{\theta_1}R}+\frac{1}{R\log^{{\alpha_1\eta_3}/{d}}R}\right).
\end{equation*}
Then, following the same argument of part (1), we can arrive at the second conclusion by Theorem \ref{t3-1}(2).
\end{proof}
\begin{proof}[Proof of Example $\ref{ex3}$]
Here we still try to disprove \cite[Condition 1.3, p.\ 5027]{KS}. Let
$D=B(0,1)$ and $t=1$. According to the proof of Example \ref{ex2}, for all $n$ large enough,
\begin{equation}\label{ex3-1}
T_1^V(\I_D)(x_n)\le C_1\exp\Big(-C_2|x_n|\log|x_n|\Big)=
C_1\exp\Big(-C_3n^{k_0}\log n\Big).
\end{equation}
On the other hand, for $n$ large enough
\begin{equation*}
\begin{split}
T_1^V(\I_{B(x_{n},1)})(x_{n})&\ge T_1^V(\I_{B(x_{n},r_{n})})(x_{n})\\
& \ge \Ee^{x_{n}}\Big(\tau_{B(x_{n},r_{n})}>1; \exp\Big(-\int_0^1 V(X_s)ds\Big)\Big)\\
& =e^{-1}\Pp^{x_{n}}(\tau_{B(x_{n},r_{n})}>1)\\
&=e^{-1}\Pp^{0}(\tau_{B(0,r_n)}>1),
\end{split}
\end{equation*}
where the first equality follows from the fact that $V(z)=1$ for
every $z \in B_n$, and in the last equality we have used the space-homogeneous property of truncated symmetric $\alpha$-stable process.

Let $(X_t)_{\ge0}$ be the truncated symmetric $\alpha$-stable process. By the Meyer construction for truncated $\alpha$-stable process (see \cite[Remark 3.5]{BBCK}), there corresponds to a symmetric $\alpha$-stable process $(X_t^*)_{t\ge0}$ (on the same probability space), such that
$X_t=X_t^*$ for any $t\in(0, N_1^*)$, where
$$
N^*_1=\inf\big\{t\ge0: |\Delta X^*_t|>1\big\},$$
and $\Delta X^*_t:=X^*_t-X^*_{t-}$ denotes the jump of $(X^*_t)_{t\ge0}$ at time $t$.
In the following, let $$\tau^*_{B(0,r)}=\inf\big\{t>0:X_t^*\notin B(0,r)\big\}$$ be the first exit time from $B(0,r)$ of the process $(X_t^*)_{t\ge0}$.
Note that, under $\Pp^0$ the event
$\{X_t^* \in B(0,r),\,\forall\ t\in [0,1]\}$ implies that the process $(X_t^*)_{t\ge0}$ does not have any jump bigger than $1$.
Then we find that there are constants $C_4, \lambda^*_1>0$ such that for all $n\ge1$ large enough, \begin{equation*}
\begin{split}
\Pp^{0}(\tau_{B(0,r_{n})}>1)&\ge \Pp^{0}(\tau^*_{B(0,r_{n})}>1)\\
&=\Pp^{0}(\tau^*_{B(0,1)}>r_{n}^{-\alpha})\\
&=\int_{B(0,1)} p^*_{B(0,1)}(r_n^{-\alpha},0,z)\,dz\\
&\ge C_4 e^{-\lambda^*_1 r^{-\alpha}_{n}}.
\end{split}
\end{equation*}
Here in the first equality we have used the scaling property of symmetric $\alpha$-stable process, in the second equality  $p^*_{B(0,1)}(t,x,y)$ denotes the Dirichlet heat kernel of the process $(X_t^*)_{t\ge0}$ on $B(0,1)$, and the last inequality follows from lower bound of $p^*_{B(0,1)}(t,x,y)$ established in \cite[Theorem 1.1(ii)]{CKS}.
 Hence, for $n$ large enough,
\begin{equation}\label{ex3-2}
\begin{split}
T_1^V(\I_{B(x_n,1)})(x_n)&\ge C_4 \exp\Big(-\lambda_1 n^{k_0-\frac{\alpha}{d}}\Big).
\end{split}
\end{equation}

According to (\ref{ex3-1}) and (\ref{ex3-2}) above,
we know that for any constant $C>0$, the following inequality
\begin{equation*}\label{ex3-3}
T_1^V(\I_{B(x,1)})(x)\le C T_1^V(\I_D)(x).
\end{equation*}
does not hold for $x=x_{n}$ when $n$ large enough.
In particular,
\cite[Condition 1.3, p.\ 5027]{KS} is not satisfied, and so the semigroup $(T_t^V)_{t \ge 0}$ is not intrinsically ultracontractive.

However, for every $R\ge 2$ and $n\ge 1$ with $n^{k_0}\le R \le (n+1)^{k_0}$,
\begin{equation*}
\begin{split}
|\{x \in \R^d: x \in A, |x|\ge R\}|&\le
\sum_{m=n}^{\infty}|B(x_m,r_m)|=
\sum_{m=n}^{\infty}r_m^d\\
&= \sum_{m=n}^{\infty} m^{-\frac{dk_0}{\alpha}+1}
\le C_5n^{-\frac{dk_0}{\alpha}+2}\\
&\le C_6 \Big((n+1)^{k_0}\Big)^{-\frac{d}{\alpha}+\frac{2}{k_0}}\le
\frac{C_6}{ R^{\frac{d}{\alpha}-\varepsilon}}
\end{split}
\end{equation*}
holds for some constant $C_6$ independent of $R$, where
in the last inequality we have used the fact that $\frac{2}{k_0}<\varepsilon$. Therefore
(\ref{ex3-0}) holds true. By now we have finished the proof.
\end{proof}
\section{Appendix}
In this appendix, we will present the proofs of Propositions \ref{p1-1}
and \ref{p1-2}.

\begin{proof}[Proof of Proposition $\ref{p1-1}$]
Let $(T_t)_{t\ge0}$ be the Markov semigroup associated with the regular
Dirichlet form $(D, \mathscr{D}(D))$. Under assumption {\bf (A1)},
for every $t>0$,
$$\|T_t\|_{L^1(\R^d;dx)\to L^\infty(\R^d; dx)}=\sup_{x,y \in \R^d}p(t,x,y)\le
c_t.$$
 According to \cite[Theorem 3.3.15]{WBook}, the following super Poincar\'{e} inequality holds
\begin{equation*}\int f^2(x)\,dx\le rD(f,f)+\beta(r)\Big(\int |f(x)|\,dx\Big)^2,\quad r>0, f\in \mathscr{D}(D),\end{equation*} where
$$\beta(r)=\inf_{s\le r, t>0} \frac{s \|T_t\|_{L^1(\R^d;dx)\to L^\infty(\R^d; dx)}}{t} \exp\Big(\frac{t}{s}-1\Big)\le \|T_r\|_{L^1(\R^d;dx)\to L^\infty(\R^d; dx)}\le c_r.$$
Therefore, we can take the reference symmetric function $\mu$ in \cite[(1.2)]{WW08} to be Lebesgue measure.

Clearly, the potential function $V$ satisfies \cite[(1.3) and (1.5)]{WW08}. Then, the desired assertion immediately follows from \cite[Corollary 1.3]{WW08}.
\end{proof}

\begin{proof}[Proof of Proposition $\ref{p1-2}$]
For any $t>0$ and $x,y\in\R^d$, $p^V(t,x,y)\le p(t,x,y)\le c_t$, so
the semigroup $(T_t^V)_{t \ge 0}$ is ultracontractive. In
particular, by the symmetric property of $(T_t^V)_{t \ge 0}$ on
$L^2(\R^d;dx)$, we know that $\|T_t^V\|_{L^2(\R^d;dx)\to
L^\infty(\R^d; dx)}<\infty.$ This, along with $T_t^V
\phi_1=e^{-\lambda_1 t}\phi_1$ and $\phi_1 \in L^2(\R^d;dx)$, yields
that there is a version of $\phi_1$ which is bounded.

For any $R>0$, let $\phi_1^R(x):=e^{-\lambda_1 t}\int_{\{|y|\le R\}}
p^V(t,x,y)\phi_1(y)\,dy$. For any $y\in\R^d$ and $t>0$, the function
$x\mapsto p_t^V(x,y)$ is continuous and $p^V(t,x,y)\le p(t,x,y)\le
c_t$. According to the fact that $\phi_1$ is locally
$L^1(\R^d;dx)$-integrable  and the dominated convergence theorem,
$\phi_1^R$ is also a continuous function. Now, for every fixed $x_0
\in \R^d$, let $\{x_n\}_{n=1}^{\infty}\subseteq \R^d$ be a sequence
such that $\lim_{n \rightarrow \infty}x_n=x_0$. Then we obtain
 $$\aligned
&|\phi_1(x_n)-\phi_1(x_0)|\\
&=e^{-\lambda_1 t}
|T^V_t\phi_1(x_n)-T^V_t\phi_1(x_0)|\\
&=e^{-\lambda_1 t}\Big|\int_{\R^d}p^V(t,x_n,y)\phi_1(y)\,dy-
\int_{\R^d}p^V(t,x_0,y)\phi_1(y)\,dy\Big|\\
&\le  |\phi_1^R(x_n)-\phi_1^R(x_0)|+
2\sup_{x \in \R^d}\Big(\int_{\R^d}\big(p^V(t,x,y)\big)^2\,dy\Big)^{{1}/{2}}
\Big(\int_{\{|y|>R\}}\phi_1^2(y)\,dy\Big)^{{1}/{2}}\\
&\le |\phi_1^R(x_n)-\phi_1^R(x_0)|+2\sqrt{c_t}\Big(\int_{\{|y|>R\}}\phi_1^2(y)\,dy\Big)^{{1}/{2}}.
\endaligned$$
Letting $n \rightarrow \infty$ and then $R \rightarrow \infty$, we
arrive at $\lim_{n \rightarrow \infty}\phi_1(x_n)=\phi_1(x)$.
Therefore there exists a version of $\phi_1$ which is continuous.

Let $(D^V, \mathscr{D}(D^V))$ be the Dirichlet form associated with $(T_t^V)_{t \ge 0}$. Due to the following variational principle
\begin{equation*}
\lambda_1=\inf\bigg\{\frac{D^V(f,f)}{\int_{\R^d} f^2(x)\,dx}: f \in
\D(D^V),\ f \neq 0\bigg\}= D^V(\phi_1,\phi_1),
\end{equation*}
and the fact $D^V(|\phi_1|,|\phi_1|)\le D^V(\phi_1,\phi_1)$, we know
that $\phi_1 \ge 0$. Now, assume that $\phi_1(x_0)=0$ for some $x_0
\in \R^d$. Since $p^V(t,x,y)>0$ for any $t>0$ and $x,y\in\R^d$, and
$$\phi_1(x_0)=e^{-\lambda_1
t}\int_{\R^d}p^V(t,x_0,y)\phi_1(y)\,dy=0,$$
 we find by the continuity of $\phi_1$ that $\phi_1(x)=0$ for every $x \in \R^d$. This contradiction implies that
$\phi_1>0$ is positive everywhere. The proof is complete.
\end{proof}


\end{document}